\newtheorem{theorem}{Theorem}[section]
\newtheorem{corollary}{Corollary}[section]
\newtheorem{proposition}{Proposition}[section]
\newtheorem{remark}{Remark}[section]
\numberwithin{equation}{section}
\newcommand{\bbr}{\mathbb{R}}
\newcommand{\bbi}{\mathbb{I}}
\newcommand{\bbn}{\mathbb{N}}
\newcommand{\bw}{\mathbf{w}}
\newcommand{\hw}{\hat{\bw}}
\newcommand{\bv}{\mathbf{v}}
\newcommand{\vf}{\bv_{f}}
\newcommand{\vs}{\bv_{s}}
\newcommand{\bu}{\mathbf{u}}
\newcommand{\hv}{\hat{\bv}}
\newcommand{\hu}{\hat{\bu}}
\newcommand{\hvf}{\hv_{f}}
\newcommand{\hvs}{\hv_{s}}
\newcommand{\bF}{\mathbf{F}}
\newcommand{\Fs}{\mathbf{F}_s}
\newcommand{\Fse}{\bF_{s,e}}
\newcommand{\hF}{\hat{\bF}}
\newcommand{\hFf}{\hF_{f}}
\newcommand{\hFs}{\hF_{s}}
\newcommand{\hFse}{\hF_{s,e}}
\newcommand{\hFsg}{\hF_{s,g}}
\newcommand{\Oft}{\Omega_{f}^{t}}
\newcommand{\Ost}{\Omega_{s}^{t}}
\newcommand{\Osg}{\Omega_{s}^{g}}
\newcommand{\Of}{\Omega_{f}}
\newcommand{\Os}{\Omega_{s}}
\newcommand{\OM}{\Omega}
\newcommand{\hnab}{\hat{\nabla}}
\newcommand{\hJ}{\hat{J}}
\newcommand{\hJf}{\hJ_f}
\newcommand{\hJs}{\hJ_s}
\newcommand{\hJse}{\hJ_{s,e}}
\newcommand{\hJsg}{\hJ_{s,g}}
\newcommand{\hg}{\hat{g}}
\newcommand{\rf}{\rho_f}
\newcommand{\rs}{\rho_s}
\newcommand{\hr}{\hat{\rho}}
\newcommand{\hrf}{\hr_f}
\newcommand{\hrs}{\hr_s}
\newcommand{\hc}{\hat{c}}
\newcommand{\hcf}{\hc_f}
\newcommand{\hcs}{\hc_s}
\newcommand{\hcss}{\hcs^*}
\newcommand{\nuf}{\nu_f}
\newcommand{\nus}{\nu_s}
\newcommand{\mus}{\mu_s}
\newcommand{\hnu}{\hat{\nu}}
\newcommand{\hnuf}{\hnu_f}
\newcommand{\hnus}{\hnu_s}
\newcommand{\hmu}{\hat{\mu}}
\newcommand{\hmus}{\hmu_s}
\newcommand{\vp}{\varphi}
\newcommand{\cL}{\mathcal{L}}
\newcommand{\wo}{\bw_0}
\newcommand{\bsigma}{\bm{\sigma}}
\newcommand{\sigf}{\bsigma_f}
\newcommand{\sigs}{\bsigma_s}
\newcommand{\sigse}{\sigs^{e}}
\newcommand{\sigsv}{\sigs^v}
\newcommand{\hsigma}{\hat{\bsigma}}
\newcommand{\hsigf}{\hat{\bsigma}_f}
\newcommand{\hsigs}{\hat{\bsigma}_s}
\newcommand{\hsigse}{\hsigs^{e}}
\newcommand{\hsigsv}{\hsigs^v}
\newcommand{\bn}{\mathbf{n}}
\newcommand{\hn}{\hat{\bn}}
\newcommand{\bT}{\mathbf{T}}
\newcommand{\hT}{\hat{\bT}}
\newcommand{\hpi}{\hat{\pi}}
\newcommand{\pif}{\pi_f}
\newcommand{\pis}{\pi_s}
\newcommand{\hpif}{\hpi_f}
\newcommand{\hpis}{\hpi_s}
\newcommand{\cD}{\mathcal{D}}
\newcommand{\YT}{Y_T}
\newcommand{\bS}{\mathbf{S}}
\newcommand{\bK}{\mathbf{K}}
\newcommand{\tk}{\tilde{\bK}}
\newcommand{\kf}{\tk_f}
\newcommand{\ks}{\tk_s}
\newcommand{\bH}{\mathbf{H}}
\newcommand{\FOinvtran}{\invtr{\hF} - \bbi}
\newcommand{\FfOinv}{\inv{\hFf} - \bbi}
\newcommand{\FfOinvtran}{\invtr{\hFf} - \bbi}
\newcommand{\FsO}{\hFs - \bbi}
\newcommand{\FsOinvtran}{\invtr{\hFs} - \bbi}
\newcommand{\TD}{T^{\delta}}
\newcommand{\hD}{\hat{D}}
\newcommand{\hDf}{\hD_f}
\newcommand{\hDs}{\hD_s}
\newcommand{\vo}{\hv^0}
\newcommand{\co}{\hc^0}
\newcommand{\tF}{\tilde{F}}
\newcommand{\tFf}{\tF_f}
\newcommand{\tFs}{\tF_s}
\newcommand{\cpi}{\check{\pi}}
\newcommand{\barpi}{\bar{\pi}}
\newcommand{\tv}{\tilde{\bv}}
\newcommand{\tpi}{\tilde{\pi}}
\newcommand{\nuSigma}{\nu_{\Sigma}}
\newcommand{\sL}{\mathscr{L}}
\newcommand{\sN}{\mathscr{N}}
\newcommand{\sM}{\mathscr{M}}
\newcommand{\cM}{\mathcal{M}}
\newcommand{\Smu}{S_{\mu}}
\newcommand{\ptial}[1]{ \partial_{#1} }
\newcommand{\tu}{\tilde{u}}
\newcommand{\tg}{\tilde{g}}
\newcommand{\tf}{\tilde{f}}
\newcommand{\baru}{\bar{u}}
\newcommand{\dW}[1]{\dot{W}^{#1}_{q}}
\newcommand{\dR}{\dot{\bbr}}
\newcommand{\hatu}{\hat{u}}
\renewcommand{\Bar}[1]{\overline{#1}}
\renewcommand{\Hat}[1]{\widehat{#1}}
\renewcommand{\Tilde}[1]{\widetilde{#1}}
\newcommand{\cu}{\check{u}}
\newcommand{\Sigtheta}{\Sigma_\theta}
\newcommand{\Gtheta}{G_\theta}
\newcommand{\Stheta}{S_\theta}
\newcommand{\nuStheta}{\nu_{\Stheta}}
\newcommand{\nuSigtheta}{\nu_{\Sigtheta}}
\newcommand{\cP}{\mathcal{P}}
\newcommand{\PSigtheta}{\cP_{\Sigtheta}}
\newcommand{\PSigma}{\cP_{\Sigma}}
\newcommand{\barx}{\bar{x}}
\newcommand{\bbe}{\mathbb{E}}
\newcommand{\bbeo}{{_0\bbe}}
\newcommand{\bbf}{\mathbb{F}}
\newcommand{\bbfo}{{_0\bbf}}
\newcommand{\hatf}{\hat{f}}
\DeclareMathOperator*{\supp}{\mathrm{supp}}
\newcommand{\ext}{\mathrm{ext}}
\newcommand{\cS}{\mathcal{S}}
\newcommand{\cR}{\mathcal{R}}
\newcommand{\onehalf}{\frac{1}{2}}
\newcommand{\tr}{\mathrm{tr}}
\newcommand{\rv}[1]{\left. #1 \right\vert}
\newcommand{\tran}[1]{ #1^{\top}}
\newcommand{\inv}[1]{ #1^{-1}}
\newcommand{\invtr}[1]{ #1^{-\top}}
\renewcommand{\d}{\mathrm{d}}
\newcommand{\pt}{\partial_t}
\newcommand{\jump}[1]{\left\llbracket #1 \right\rrbracket}
\newcommand{\abs}[1]{\left\vert #1 \right \vert}
\newcommand{\norm}[1]{\left\Vert #1 \right \Vert}
\newcommand{\seminorm}[1]{\left[ #1 \right]}
\newcommand{\Lq}[1]{L^{#1}}
\newcommand{\Lqa}[1]{L^{#1}_{(0)}}
\newcommand{\W}[1]{W^{#1}_{q}}
\newcommand{\WO}[1]{{_0W}^{#1}_{q}}
\newcommand{\WOO}[1]{W^{#1}_{q,0}}
\newcommand{\Bq}[1]{B^{#1}_{q,q}}
\newcommand{\Bqp}[1]{B^{#1}_{q,p}}
\newcommand{\tin}{\quad \text{in }}
\newcommand{\ton}{\quad \text{on }}
\newcommand{\arxiv}[1]{arXiv: \href{https://arxiv.org/abs/#1}{#1}}
\newcommand{\FSI}{\eqref{Fluid}--\eqref{initialc}}
\DeclareMathOperator*{\Div}{\mathrm{div}}
\newcommand{\hdiv}{\widehat{\Div}}
\newcommand{\hDelta}{\widehat{\Delta}}
\begin{document}

\title[Fluid-structure interaction problem for plaque growth]{On a fluid-structure interaction problem for plaque growth: cylindrical domain}

\author{Helmut Abels}
\address{Fakult\"at f\"ur Mathematik, Universit\"at Regensburg, 93040 Regensburg, Germany}
\email{Helmut.Abels@ur.de}
\author{Yadong Liu}
\address{Fakult\"at f\"ur Mathematik, Universit\"at Regensburg, 93040 Regensburg, Germany}
\email{Yadong.Liu@ur.de}

\date{\today}

\subjclass[2010]{Primary: 35R35; Secondary: 35Q30, 74F10, 74L15, 76T99}
\keywords{Fluid-structure interaction, two-phase flow, growth, free boundary problem, maximal regularity, contact angle problem} 

\thanks{Y. Liu was supported by the RTG 2339 ``Interfaces, Complex Structures, and Singular Limits'' of the German Science Foundation (DFG). The support is gratefully acknowledged.}

\begin{abstract}
	This paper concerns a free-boundary fluid-structure interaction problem for plaque growth proposed by Yang et al. [J. Math. Biol., 72(4):973--996, 2016] with additional viscoelastic effects, which was also investigated by the authors [arXiv preprint: 2110.00042, 2021]. Compared to it, the problem is posed in a cylindrical domain with ninety-degree contact angles, which brings additional difficulties when we deal with the linearization of the system.
	By a reflection argument, we obtain the existence and uniqueness of strong solutions to the model problems for the linear systems, which are then shown to be well-posed in a cylindrical (annular) domain via a localization procedure. Finally, we prove that the full nonlinear system admits a unique strong solution locally with the aid of the contraction mapping principle.
\end{abstract}

\maketitle


\section{Introduction}
In this paper, we focus on a 3d free-boundary fluid-structure interaction problem for plaque growth, which was also addressed in the authors' previous work \cite{AL2021}. To be more precise, the blood is assumed to be the incompressible Navier--Stokes equation and the artery is modeled by an elastic equation with viscoelasticity, while inside the blood flow and vessel wall react the cells, leading to the plaque formation, see e.g. Yang--J\"{a}ger--Neuss-Radu--Richter \cite{Yang2016}.
Define $ \OM^t := \Oft \cup \Ost \cup \Sigma^t \subset \bbr^3 $ (see Figure \ref{domain}), with three disjoint parts, where $ \Oft $, $ \Ost $ are piece-wise smooth domains for fluid and solid respectively, while $ \Sigma^t $ is a two dimensional sub-manifold of $ \bbr^3 $ with boundary $ \partial \Sigma^t $. 
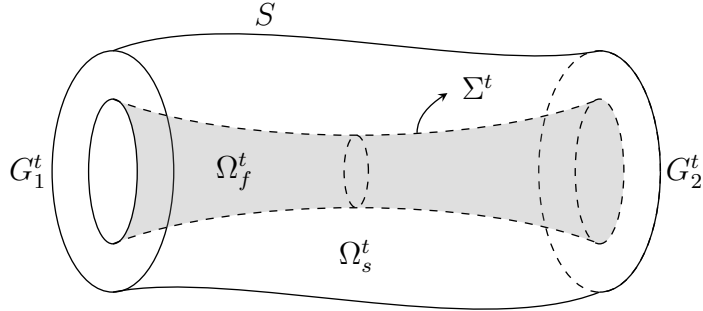
\begin{figure}[h]
	\centering
	\begin{tikzpicture}[scale=1.6,line width=0.5pt]
		\draw[rounded corners=35pt](0,0) .. controls (1,0.4) and (3,-0.2) .. (4,0);
		\draw[rounded corners=35pt](0,-2) .. controls (1,-1.8) and (3,-2.4) .. (4,-2);
		\draw (4.5,-1) arc (0:90:0.5 and 1);
		\draw[dashed] (4.5,-1) arc (0:360:0.5 and 1);
		\draw (4.5,-1) arc (0:-90:0.5 and 1);
		\draw (0.5,-1) arc (0:360:0.5 and 1);
		
		\fill [gray,opacity=0.25] (0,-1.6) .. controls (1,-1.2) and (3,-1.2) .. (4,-1.6) -- (4,-1.6) arc (-90:90:0.2 and 0.6) -- (4,-0.4) .. controls (3,-0.8) and (1,-0.8) .. (0,-0.4) -- (0,-0.4) arc (90:-90:0.2 and 0.6);
		
		\draw[dashed, rounded corners=35pt] (0,-0.4) .. controls (1,-0.8) and (3,-0.8) .. (4,-0.4);
		\draw[dashed, rounded corners=35pt] (0,-1.6) .. controls (1,-1.2) and (3,-1.2) .. (4,-1.6);
		\draw[dashed] (4,-1.6) arc (-90:270:0.2 and 0.6);
		\draw[dashed] (2.1,-1) arc (0:360:0.1 and 0.3);
		\draw (0.2,-1) arc (0:360:0.2 and 0.6);
		
		\node at (1, - 1) {$ \Oft $};
		\node at (2, - 1.7) {$ \Ost $};
		\node at (1.25, 0.3) {$ S $};
		\node at (-0.7, -1) {$ G_1^t $};
		\node at (4.7, -1) {$ G_2^t $};
		\node at (3, - 0.3) {$ \Sigma^t $};
		\draw [-stealth] (2.5, -0.68) .. controls (2.5, - 0.55) and (2.6, - 0.45).. (2.75, - 0.35);
	\end{tikzpicture}
	\caption{Deformed cylindrical domain.}
	\label{domain}
\end{figure}
In particular, $ \partial \OM^t = \Bar{G^t} \cup S $, $ \partial \Oft = \Bar{G_f^t} \cup \Sigma^t $ and $ \partial \Ost = \Bar{G_s^t} \cup \Sigma^t \cup S $, where $ G^t := G_1^t \cup  G_2^t \cup \partial \Sigma^t $ is a hypersurface with $ G_\beta^t := G_{1,\beta}^t \cup  G_{2,\beta}^t $, $ \beta \in \{f,s\} $, $ G_i^t = G_{i,f}^t \cup G_{i,s}^t $, $ \Bar{G_{i,f}^t} \subset G_i^t $, $ i \in \{1,2\} $, and $ S $ denotes the fixed surrounding surface, which is supposed to be perpendicular to $ G^t $ at $ \partial S $.  Moreover, $ \Sigma^t $ is assumed to be perpendicular to $ G^t $ at $ \partial \Sigma^t $ as well.
In such setting, the domain is endowed with the fixed contact line $ \partial S $ and moving contact line $ \partial \Sigma^t $ with ninety-degree contact angles for a short time, while in \cite{AL2021}, we considered a smooth domain without contact.

For $ T > 0 $, we model the motion of fluid by the classical incompressible Navier--Stokes equations, which is
\begin{equation}
	\label{Fluid}
	\left.
		\begin{aligned}
			\rf \left(\pt + \vf \cdot \nabla \right) \vf & = \Div \sigf(\vf,\pif) \\
			\Div \vf & = 0 
		\end{aligned}
	\right\} \quad \text{in}\ \Oft, \ t \in (0,T),
\end{equation}
where $ \rf > 0 $ is the fluid density. $ \sigf(\vf,\pif) = - \pif \bbi + \nuf ( \nabla \vf + \tran{\nabla} \vf )  $ denotes the Cauchy stress tensor, $ \pif $ is the unknown fluid pressure and $ \nuf > 0 $ represents the fluid viscosity.

Same as in \cite{AL2021}, the motion of vessel wall is captured by the incompressible neo-Hookean material with viscoelasticity
\begin{equation}
	\label{Solid}
	\left.
		\begin{aligned}
			\rs \left(\pt + \vs \cdot \nabla \right) \vs & = \Div \sigs(\vs,\pis) \\
			\left(\pt + \vs \cdot \nabla \right) \rs + \rs \Div \vs & = f_s^g
		\end{aligned}
	\right\} \quad \text{in}\ \Ost, \ t \in (0,T),
\end{equation}
where $ \rs > 0 $ is the solid density. $ \sigs = \sigse + \sigsv $ stands for the stress tensor of this kind of solid with 
\begin{align*}
	\sigse = - \pis \bbi + \mus \left( \inv{\Fse} \invtr{\Fse} - \bbi \right), \quad
	\sigsv = \nus \left( \pt \inv{\Fs} + \pt \invtr{\Fs} \right) \invtr{\Fs}.
\end{align*}
$ \pis $ denotes the solid pressure, $ \mus $, $ \nus $ represent the Lam\'e coefficient and viscosity respectively, which are all constant. 
The first equation is the balance equation of linear momentum.
$ \sigse $ is given by the constitutive relation of incompressible Neo-Hookean material, which is hyperelastic, isotropic and incompressible. This relationship was widely used to describe blood vessel wall by many investigators, see e.g. \cite{Tang2004,Yang2016}. Here $ \Fs $ is the inverse deformation gradient and $ \Fse $ denotes an inverse elastic tensor under the assumption of growth as in authors' previous work \cite{AL2021}. Similarly, the Kelvin--Voigt stress tensor $ \sigse $ is introduced, see e.g. Mielke--Roub\'{\i}\v{c}ek \cite{MR2020}. For more discussions about it, readers are referred to \cite[Remark 1.1]{AL2021}. Moreover, the second equation of \eqref{Solid} results from the mass balance, where $ f_s^g  = \gamma \beta c_s $ with $ \beta, \gamma > 0 $, is called growth function and represents the rate of mass growth per unit volume due to the formation of the plaque, which comes from the reaction of cells in the whole domain. See e.g. \cite{AM2002,JC2012,Yang2016}. 

The kinetic and dynamic conditions on the interface $ \Sigma^t $, $ t \in (0,T) $, are
\begin{align}
	\label{vjump}
	\jump{\bv} & = 0, \quad \text{on} \  \Sigma^t, \\
	\label{sigjump}
	\jump{\bsigma} \bn_{\Sigma^t} & = 0, \quad \text{on} \  \Sigma^t,
\end{align}
where $ \bn_{\Sigma^t} $ stands for the outer unit normal vector on $ \Sigma^t $ pointing from $ \Ost $ to $ \Oft $. $ \jump{f} $ denotes the jump of a quantity $ f $ defined on $ \Oft $ and $ \Ost $ across $ \Sigma^t $, namely,
\begin{equation*}
	\jump{f}(x) := \lim_{\theta \rightarrow 0} {f(x + \theta \bn_{\Sigma^t}(x)) - f(x - \theta \bn_{\Sigma^t}(x))}, \quad \forall x \in \Sigma^t.
\end{equation*}
Moreover, $ S $ is supposed to be the rigid part of the boundary, i.e.,
\begin{equation}
	\vs = 0 \quad \text{on} \  S,
\end{equation}
which means that the outside of the blood vessel is fixed.

Now for the boundary conditions on $ G^t $, we need a more careful consideration to make sure that they are physically meaningful and compatible to the conditions on $ S $ and $ \Sigma^t $ respectively. In this work, we impose the outflow conditions on $ G^t $, $ t \in (0,T) $, 
\begin{align}
	\cP_{G^t}(\bv) & = 0, \quad \text{on} \  G^t \backslash \Sigma^t, \\
	(\bsigma \bn_{G^t}) \bn_G & = 0, \quad \text{on} \  G^t \backslash \Sigma^t,
\end{align}
where $ \cP_{G^t} := I - \bn_{G^t} \otimes \bn_{G^t} $ denotes the tangential projection onto $ G^t $.
\begin{remark}\label{remark: G^t}
	$ G^t $ is a free surface. Since the vessel is cut in the Lagrangian coordinate, the cross sections then turn to be free when we recovered it in the Eulerian coordinate. In general, it can be fixed with suitable boundary conditions.
\end{remark}
\begin{remark} \label{Remark: outflow}
	Now we discuss more about the choice of the outflow boundary conditions on $ G^t $.
	\begin{enumerate}
		\item The boundary condition is not possible to be the Dirichlet type, i.e., \textit{no-slip} condition. On the one hand, a Dirichlet boundary implies the rigid part. For example $ \bv = 0 $ means that the displacement $ \int_{0}^{t} \bv \d t = 0 $ vanishes on the boundary, which contradicts to our consideration, as in remark \ref{remark: G^t}. On the other hand, a \textit{no-slip} condition leads to a incompatibility at the moving contact lines $ \partial \Sigma^t $. Specifically, on the interface $ \Sigma^t $ the normal velocity is $ V_{\Sigma^t} := \bv \cdot \bn_{\Sigma^t} $, while at $ \partial \Sigma^t $ one derives $ V_{\Sigma^t} = 0 $ according to the vanishing Dirichlet condition, contradicting to the ``moving contact line'' setting.
		\item In some literature, e.g. \cite{GT2018,GT2020}, the well-known Navier-slip condition was employed for the slip of the fluid along the solid with dynamic contact lines, while in \cite{Wilke2013}, Wilke showed that an incompatibility with moving contact lines happens for a two-phase Navier--Stokes problem in a cylindrical domain. So they considered a so-called \textit{pure-slip} condition. However, in our model, the surface $ G^t $ is not supposed to be fixed and it is physically meaningless if we take the \textit{pure-slip} condition into account. Thus, an outflow boundary condition is proper and fits to the reality of the blood flow in a vessel if we cut off the human vessels along the cross section.
		\item For the sake of analysis, the outflow condition allows for a reflection argument at the contact lines so that the reflected solutions of model problems are endowed with the symmetry, in view of the divergence-free condition of the solutions. See Section \ref{Section: Model problems} for more details.
	\end{enumerate}
\end{remark}
\noindent The initial data for velocities are prescribed as
\begin{equation}
	\label{initial}
	\vf(x,0) = \vf^0, \quad \vs(x,0) = \vs^0, \quad \text{in} \  \OM^0.
\end{equation}

Now as in \cite{AL2021,Yang2016}, we introduce the dynamics of monocytes in the blood flow and dynamics of macrophages and foam cells in the vessel wall, of which the concentrations are denoted by $ c_f $, $ c_s $, $ c_s^* $, respectively. The motion of monocytes in the blood is given by the transport-diffusion equation
\begin{equation}\label{CF}
	\pt c_f + \Div \left( c_f \vf \right) - D_f \Delta c_f = 0, \quad \text{in}\ \Oft, \ t \in (0,T),
\end{equation}
where $ D_f > 0 $ is the diffusion coefficient in the blood. Analogously, the motion of macrophages in the vessel wall is described by
\begin{equation}\label{CS}
	\pt c_s + \Div \left( c_s \vs \right) - D_s \Delta c_s = - f_s^r, \quad \text{in}\ \Ost, \ t \in (0,T),
\end{equation}
where $ D_s > 0 $ is the diffusion coefficient in the vessel wall. $ f_s^r := \beta c_s $ with $ \beta > 0 $ stands for the reaction function, modeling the rate of transformation from macrophages into foam cells. Thus, the transportation of $ c_s^* $ reads as
\begin{equation}\label{CSS}
	\pt c_s^* + \Div \left( c_s^* \vs \right) = f_s^r, \quad \text{in}\ \Ost, \ t \in (0,T),
\end{equation}
since the foam cells are not supposed to diffuse in vessels. 

To model the penetration of monocytes from the blood into the vessel wall, one gives the transmission conditions on $ \Sigma^t $ by the jump condition
\begin{align}
	\jump{D \nabla c} \cdot \bn_{\Sigma^t} & = 0, \quad \text{on} \  \Sigma^t, 
	\label{TC1} \\
	\zeta \jump{c} - D_s \nabla c_s \cdot \bn_{\Sigma^t} & = 0, \quad \text{on} \  \Sigma^t,
	\label{TC2}
\end{align}
where $ \zeta > 0 $ denotes the permeability of the interface $ \Sigma^t $ with respect to the monocytes, which is supposed to be a constant. Moreover, the conditions on $ G $ are given by
\begin{align} \label{CBoundary: G}
	D \nabla c \cdot \bn_{G^t} & = 0, \quad \text{on} \ G^t \backslash \partial \Sigma^t,
\end{align}
while on $ S $, 
\begin{equation} \label{CBoundary: S}
	D_s \nabla c_s \cdot \bn_S = 0, \quad \text{on} \  S.
\end{equation}
At initial state, the vessel is assumed to be healthy, which means that there is no foam cell in vessel, i.e.,
\begin{equation}
c_s^*(x,0) = 0, \quad \text{in} \  \Os^0.
\end{equation}
The initial values for concentrations of monocytes and macrophages are prescribed by
\begin{equation}
	\label{initialc}
	c_f(x,0) = c_f^0, \quad c_s(x,0) = c_s^0, \quad \text{in} \  \OM^0.
\end{equation}

For further analysis, we define the initial domain as $ \OM = \Of \cup \Os \cup \Sigma $, where $ \Of := \Of^0 $, $ \Os := \Os^0 $ and $ \Sigma := \Sigma^0 $ that is supposed to be perpendicular to the assumed flat initial surface $ G := G^0 $ at $ \partial \Sigma $ just like $ S $.
Then the deformation from initial configuration to current one is defined as
\begin{align*}
	\vp: \OM \times [0,T] \rightarrow \OM^t,
\end{align*}
with
\begin{align*}
	x = \vp(X,t) = X + \int_{0}^{t} \hv(X, \tau) \d \tau, \quad \forall X \in \OM,
\end{align*}
as well as $ \rv{x}_{t = 0} = \vp(X,0) = X $. Here $ \hv $ is the corresponding velocity in reference configuration.
In the sequel, the quantities or operators with a hat ``$ \hat{\cdot} $'' are all defined in reference configuration without any special clarification. Then deformation gradient has the form of
\begin{equation}\label{DG}
	\hF(\hv)(X,t) = \frac{\partial}{\partial X} \vp(X,t) = \hnab \vp(X,t) = \bbi + \int_{0}^{t} \hnab \hv(X,\tau) \d \tau,
\end{equation}
for all $ X \in \OM $ with initial deformation $ \hF |_{t = 0} = \bbi $ and by $ \hJ := \det \hF $ its determinant.
Conversely, we have the inverse deformation gradient by $ \bF(\bv)(x,t) = \inv{\hF} $.

To explain $ \Fse $ in \eqref{Solid}, which is the elastic tensor, we discuss the growth as in \cite{AL2021}. In general, the real deformation gradient $ \hFs $ is contributed by both growth and elastic deformation. Thus, a suitable treatment of the deformation is crucial to solve the system. In \cite{Yang2016}, Yang et al. decomposed the deformation gradient $ \hF $ based on the theory of multiple natural configurations, for which they introduced the so-called \textit{natural configuration}. For more details, see e.g. \cite{AM2002,JC2012,RHM1994,Yang2016}. 
In this article, we assume the decomposition of $ \hFs $ as
\begin{equation*}
	\hFs = \hFse \hFsg, \quad \text{in}\ \Os,
\end{equation*}
where $ \hFsg $ is the growth tensor and $ \hFse $ represents the elastic tensor. Then the corresponding determinants are 
\begin{equation*}
	\hJsg = \det \hFsg, \quad \hJse = \det \hFse, \quad \text{in}\ \Os,
\end{equation*}
with $ \hJs = \hJsg \hJse $.

To close the system, we still need a specific description of the growth tensor. Following \cite{AL2021,Yang2016}, a \textit{constant-density} type growth is taken into account since our solid is supposed to be incompressible. In the meantime, the second equation of \eqref{Solid} reads as
\begin{equation*}
	\rs \Div \vs = f_s^g.
\end{equation*}
Besides, the plaque is assumed to grow isotropically:
\begin{equation*}
	\hFsg = \hg \bbi, \quad \text{in}\ \Os,
\end{equation*}
where $ \hg = \hg(X,t) $ is the metric of growth, a scalar function depending on the concentration of macrophages. Hence, 
\begin{equation*}
	\hFse = \frac{1}{\hg} \hFs, \quad
	\hJsg = \hg^3,
\end{equation*}
where $ 3 $ is the dimension of space. Then as in \cite{AM2002,JC2012,Yang2016}, one obtains
\begin{equation}
	\label{gLagragian}
	\pt \hg = \frac{\gamma \beta}{3 \hrs} \hcs \hg, \quad \text{in}\ \Os,
\end{equation} 
which shows the specific dependence on $ \hcs $ of $ \hg $. At initial state, the growth tensor $ \hFsg $ is supposed to be the identity, i.e., 
\begin{equation} \label{initial-g}
	\hg(X,0) = 1, \quad \text{in}\ \Os.
\end{equation}

\subsection{Literature}
	Before stating our result, let us recall some previous works on the fluid-structure interaction problems, in which the incompressible Navier--Stokes equations coupled with elastic equations are considered. 
	
	In the case of 3d-3d model, the existence and uniqueness of strong solutions of such kind of models was firstly established by Coutand--Shkoller \cite{CS2005}, where they investigated the interaction of the Navier--Stokes equation and a linear Kirchhoff elastic material. The results were extended to the quasilinear elastodynamics case by them in \cite{CS2006}, where they regularized the hyperbolic elastic equation by a particular parabolic artificial viscosity and then obtained the existence of strong solutions by the a priori estimates. Subsequently, systems coupled the incompressible Navier--Stokes equation and the wave equation were continuously analyzed by Ignatova--Kukavica--Lasiecka--Tuffaha \cite{IKLT2014,IKLT2017}. More specifically, In \cite{IKLT2014}, a wave equation with several damping terms was considered and exponential decay of the energy was obtained. Later, they proved that the energy still decay without the boundary friction by introducing the tangential and time-tangential energy estimates. The coupling of the Navier–Stokes equation and the Lam\'{e} system was studied by Kukavica--Tuffaha \cite{KT2012} with initial regularity $ (v_0, w_1) \in H^3(\Of) \times H^2(\Os) $, while Raymond--Vanninathan \cite{RV2014} further obtained the same results by a weaker initial regularity $ (v_0, w_1) \in H^{3/2 + \varepsilon}(\Of) \times H^{1 + \varepsilon}(\Os) $, $ \varepsilon > 0 $ arbitrarily small, with periodic boundary conditions. Recently, Boulakia--Guerrero--Takahashi \cite{BGT2019} showed a similar result for the Navier--Stokes--Lam\'{e} system in a smooth domain with reduced demand of the initial regularity.
	
	Besides the standard models above, we refer to \cite{BG2010}, for compressible fluid coupled with elastic bodies, where Boulakia and Guerrero addressed the short time existence and the uniqueness of regular solutions with the initial data $ (\rho_0, u_0, w_0, w_1) \in H^3(\Of) \times H^4(\Of) \times H^3(\Os) \times H^2(\Os) $. Kukavica--Tuffaha \cite{KT2012compressible} improved the result by a weaker initial regularity $ (\rho_0, u_0, w_1) \in H^3(\Of) \times H^{3/2 + r}(\Of) \times H^{3/2 + r}(\Os) $, $ r > 0 $.
	More recently, Shen--Wang--Yang \cite{SWY2021} considered the magnetohydrodynamics (MHD)-structure interaction system, where the fluid is described by the incompressible viscous non-resistive MHD equation and the structure is modeled by the wave equation with superconductor material. They solved the existence of local strong solutions with penalization and regularization techniques.
	
	As for 3d-2d/2d-1d systems, numerous models and results were established during the last twenty years. The widely investigated case is the fluid-beam/plate systems where the beam/plate equations were imposed with different mechanical mechanism (rigidity, stretching, friction, rotation, etc.), readers are refer to e.g. \cite{CDEG2005,Grandmont2008,MC2013,TW2020} for weak solution results. Considering strong solutions, one can find related results in e.g. \cite{Beirao da Veiga2004,DS2020,GHL2019,Lequeurre2011,Lequeurre2013,MT2021NARWA,Mitra2020} and the references therein. Moreover, the fluid-structure interaction problems with linear/nonlinear shells were studied in e.g. \cite{BS2018,LR2013,MC2016} for weak solutions and in e.g. \cite{CCS2007,CS2010,MRR2020} for strong solutions respectively. It is worth mentioning that in recent works \cite{DS2020,MT2021NARWA}, a maximal regularity framework, which requires lower initial regularity and less compatibility conditions compared to the energy method, was employed.
	
	Concerning the contact angle problems in fluid dynamics, it is a challenging problem and is not yet well understood, especially with moving contact lines and dynamic contact angles. As far as we know, with a ninety-degree contact angle, Wilke \cite{Wilke2013,Wilke2020} gave a complete and outstanding analysis of two-phase Navier--Stokes equations with surface tension in cylindrical domains, using the framework of maximal regularity theory. Then Rauchecker \cite{Rauchecker2020} and Rauchecker--Wilke \cite{RW2020} extend the results to a two-phase Navier--Stokes/Stefan problem and a two-phase Navier--Stokes/Mullins--Sekerka system with boundary contact respectively. 
	The general methods in these papers are the localization procedure and the key element is the reflection argument for the model problems with respect to the linearized systems, thanks to the assumption of \textit{ninety}-degree contact angle. However, to the best of our knowledge, the problem with a dynamic contact angle still faces a large gap before completely being solved. Here we mention some works related to different contact angle problems in fluid dynamics. See e.g. \cite{GT2018,GT2020,MW2020,MW2021,TW2021,ZT2017} and references therein.
\subsection{Main features}
	Recently in \cite{AL2021}, we firstly established the short time existence of strong solutions to the considered plaque growth model including viscoelastic effects in a smooth domain under the maximal regularity theory. Motivated by these preceding results above, especially Wilke \cite{Wilke2020}, we are devoted to investigate the local existence of strong solutions to \eqref{Fluid}--\eqref{initial-g} in a cylindrical domain. In other word, we attempt to extend the results in \cite{AL2021} to the system in a cylindrical domain with ninety-degree contact angles, which is close to human arteries. 
	
	First of all, let us comment that we include the viscoelastic effects into the model as in \cite{AL2021}, which yields the parabolicity of the solid equation. To be precise, for a short time, the term describing the Kelvin--Voigt viscoelasticity leads to the principal regularity part of the equation, by which we can solve a two-phase Stokes type problem for the linearized fluid-structure interaction problem, which provides the solvabilities and maximal regularitie of the solutions. Throughout the proof based on the maximal regularity of type $ L^q $ that has been used in \cite{PS2010,PS2016,Wilke2013}, one obtains a semi-flow for the free boundary problem in a natural phase space. In particular, there is no loss of regularity and no more additional compatibility conditions needed. In this work, we only consider the three dimensional case for the sake of simplicity. In fact, it could be any dimension $ d \geq 2 $ as long as $ q $ has a modification restriction with respect to $ d $. This is also an advantage of the maximal $ L^q $-regularity theory.
	

	In the present paper, the boundary is not smooth any more since we have ninety-degree contact angles, so even for the linear system, there seem to be no well-posedness result that can be applied. To circumvent this problem, a localization procedure will be employed as in \cite{PS2016,Wilke2020}. More precisely, under suitable partition of unity for the cylindrical domain, we try to solve several kinds of model problems, for which we proceed using reflection arguments for the model problems in quarter (bent) spaces and half (bent) spaces, thanks to the ninety-degree contact angle. Then by a Neumann series argument, one can conclude the existence results of model problems and derive the well-posedness for the related linear system. However, we have no chance to consider all possible boundary conditions around the contact line, which play an essential role in both physics and mathematics. For example, as commented in \cite{Wilke2020} no-slip condition may lead to a paradoxon for the moving contact line, see also e.g. \cite{PS1983,GT2018,GT2020} and references therein. For the purpose they imposed a pure-slip condition on the boundary with contact, which guarantees that a reflection argument can be used. As in Remark \ref{Remark: outflow}, an outflow boundary condition is chosen on the surface $ G^t $ in our work. Here on the one hand, the outflow condition coincides with the situation of blood flow in vessels and does not violate the physical reality of a moving contact line. On the other hand, the symmetry of the reflected solutions on the boundary can be ensured from the perspective of mathematical analysis. For instant in the three dimensional case, if one carries out the reflection argument for Stokes equations in a quarter space with respect to $ x_2 = 0 $, the divergence-free condition $ \Div u = \ptial{1} u_1 + \ptial{2} u_2 + \ptial{3} u_3 = 0 $ should also hold on $ x_2 = 0 $. Then once $ u_1, u_3 $ are odd function with regard to $ x_2 = 0 $, $ \ptial{2} u_2 $ must be an odd function, which is exactly associated with the outflow boundary condition. 
	
	Moreover, due to the presence of the outflow boundary condition on $ G $ in \eqref{Linearized TwoPhase}, when we applied the localization argument, several auxiliary problems, namely, elliptic/Laplace transmission problem in a cylindrical domain with a Dirichlet boundary condition on $ G $ and parabolic transmission problem with a Neumann boundary condition, are involved, see Appendix \ref{AppSection: Auxiliary transmission problems}. Note that in \cite{Wilke2020}, similar auxiliary problems defined in a vertical cylindrical domain with a horizontal interface instead were analyzed. Our results in Appendix \ref{AppSection: Auxiliary transmission problems} are the extensions to \cite[Appendix 5.3]{Wilke2020}, where a Neumann boundary condition were impose on the vertical surface. 
	
	Let us finally point out that one of our main results obtained is the well-posedness of the nonstationary two-phase Stokes problem in a general cylindrical domain with an outflow boundary condition and a ninety-degree contact angle, as well as the heat equation in both a cylindrical domain and a cylindrical annular domain. Since we considered the general situation of the domain, i.e., smooth hypersurfaces $ S $ and $ \Sigma $, the results can be applied to solve broader nonlinear problems.
	
\subsection{Outline}
In Section \ref{Section: Preliminaries} we briefly introduce some function spaces and reformulate the system. Pullbacks from the deformed configuration to the reference one are discussed in Subsection \ref{Subsection: equivalent Lagrangian}, as well as the well-posedness theorems for the linearized systems in Subsection \ref{Subsection: Linearized system-Lagrangian}. 
Section \ref{Section: Model problems} is devoted to the analysis of the corresponding model problems. The main results of this section are well-posedness of these new model problems with outflow boundary conditions. In particular, the reflection argument is employed to investigate the solvability of a two-phase Stokes equation with boundary contact, as well as two heat equations. 
In Section \ref{Section: cylindrical domain}, we make an observation of additional regularity of the pressure for the first step. Then the reduction argument is carried out for the sake of analysis. Finally we prove the well-posedness of the two-phase Stokes equation in general cylindrical domain by a localization procedure.
Based on the solvability results in the previous section, in Section \ref{Section: Nonlinear well-posedness} the full nonlinear system is shown to be well-posed locally via the Banach fixed-point theorem. 
In addition, the existence of partition of our domain is given in Appendix \ref{AppSection: Partition} and the existence and uniqueness of auxiliary elliptic and parabolic transmission problems are analyzed in Appendix \ref{AppSection: Auxiliary transmission problems}.

\section{Preliminaries} \label{Section: Preliminaries}

\subsection{Function spaces} \label{Subsection: Function spaces}
If $ M \subseteq \bbr^d $, $ d \in \bbn_+ $ is measurable, $ \Lq{q}(M) $, $ 1 \leq q \leq \infty $ denotes the usual Lebesgue space and $ \norm{\cdot}_{\Lq{q}(M)} $ its norm. Moreover, $ \Lq{q}(M; X) $ denotes its vector-valued variant of strongly measurable $ q $-integrable functions, where $ X $ is a Banach space.  

Let $ \OM \subseteq \bbr^n $ be an open and nonempty domain, $ \W{m}(\OM) $, $ \dW{m}(\OM) $ denotes the usual and homogeneous Sobolev space respectively with $ m \in \bbn_+ $ and $ \Lq{q}(\OM) = \W{0}(\OM) $. Furthermore, we set 
\begin{gather*}
	\WOO{m}(\OM) = \overline{C_0^\infty (\OM)}^{\W{m}(\OM)}, \quad
	\W{-m}(\OM) := [ W^m_{q',0}(\OM) ]', \\
	\dot{W}_{q,0}^{m}(\OM) = \overline{C_0^\infty (\OM)}^{\dW{m}(\OM)}, \quad
	\dW{-m}(\OM) := [ \dot{W}^m_{q',0}(\OM) ]',
\end{gather*}
where $ q' $ is the conjugate exponent to $ q $ satisfying $ 1/q + 1/q' = 1 $. 

For $ k,k' \in \bbn $ with $ k < k' $, we consider the standard definition of the Besov spaces by real interpolation of Sobolev spaces (see e.g. Lunardi \cite{Lunardi2018})
\begin{equation*}
	\Bqp{s}(\OM) = \left( \W{k}(\OM), \W{k'}(\OM) \right)_{\theta, p},
\end{equation*}
where $ s = (1 - \theta)k + \theta k',\ \theta \in (0, 1) $. In special case $ q = p $, one obtains the Sobolev-Slobodeckij spaces by
\begin{equation*}
	\W{s}(\OM) := \Bq{s}(\OM) = \left( \W{k}(\OM), \W{k'}(\OM) \right)_{\theta, q}, \text{ if } s \notin \bbn,
\end{equation*}
which is endowed with norm $ \norm{\cdot}_{\W{s}(\OM)} = \norm{\cdot}_{\Lq{q}(\OM)} + \seminorm{\cdot}_{\W{s}(\OM)} $, where
\begin{equation*}
	\seminorm{f}_{\W{s}(\OM)} = \left(
		\int_{\OM} \int_{\OM} \left( \frac{\abs{f(x) - f(y)}}{\abs{x-y}^s} \right)^q \frac{\d x \d y}{\abs{x - y}^n} 
	\right)^{\frac{1}{q}}.
\end{equation*}

For an interval $ I \subseteq \bbr $, $ 0 < r < 1 $, $ 1 \leq q < \infty $, we recall the Banach space-valued Sobolev--Slobodeckij space as
\begin{equation*}
	\W{r}(I; X) = \left\{f \in \Lq{q}(I; X) : \norm{f}_{\W{r}(I; X)} < \infty \right\}
\end{equation*}
with the seminorm denoted by
\begin{equation*}
	\seminorm{f}_{\W{r}(I; X)} = \left(
		\int_{I \times I} \frac{\norm{f(t) - f(\tau)}_{X}^q}{\abs{t - \tau}^{n + rq}} \d t \d \tau
	\right)^{\frac{1}{q}}.
\end{equation*}
To be convenient, for $ T > 0 $ we define the corresponding space with vanishing time trace at $ t = 0 $ as
\begin{equation*}
	\WO{r}(0,T; X) = \left\{f \in \W{r}(0,T; X) : \rv{f}_{t = 0} = 0 \right\}.
\end{equation*}
\subsection{Reformulation in the reference configuration}
\label{Subsection: equivalent Lagrangian}
In this section, we transform the free-boundary fluid-structure problem for plaque growth \FSI\ from deformed configuration to the reference one and state the corresponding theorem. For quantities in different coordinates, define
\begin{equation}
	\label{equalofv}
	\begin{gathered}
		\hv(X,t) = \bv(x,t), \ \hpi(X,t) = \pi(x,t), \ \hsigma(X,t) = \bsigma(x,t), \\
		\hr(X,t) = \rho(x,t), \ \hmu(X) = \mu(x), \ \hnu(X) = \nu(x),
	\end{gathered}
\end{equation}
for all $ x = \vp(X,t), \ X \in \OM$ and $ t \geq 0 $. Then by simple calculations, it follows that
\begin{gather}
	\label{ptu}
	\pt \hat{\mathbf{u}}(X,t) = \left( \pt + \bv(x,t) \cdot \nabla \right) \mathbf{u}(x,t), \\
	\label{grad}
	\nabla \phi = \invtr{\hF} \hnab \hat{\phi}, \quad \nabla \mathbf{u} = \inv{\hF} \hnab \hat{\mathbf{u}}, \\
	\label{div}
	\Div \mathbf{u} = \tr ( \nabla \bu ) = \tr ( \inv{\hF} \hnab \hat{\mathbf{u}} ) = \invtr{\hF} : \hnab \hu, 	
\end{gather}
where $ \phi / \hat{\phi} $ are scalar-valued and $ \mathbf{u} / \hat{\mathbf{u}} $ are vector-valued functions in $ \OM^t / \OM $ respectively.
It is known that the Piola transform establishes a bridge between tensor field defined in deformed and reference configurations, which is
\begin{equation}
	\label{Piola}
	\hT(X,t) = \hJ(X,t) \bsigma(x,t) \invtr{\hF}(X,t), \quad \text{for all}\ x = \vp(X,t), \ X \in \OM,
\end{equation}
where $ \hT $ is the first Piola–Kirchhoff stress tensor. 

For the fluid part, it follows that
\begin{align*}
	\pt \hJf = \tr \left( \inv{\hF} \pt \hF \right) \hJf 
	= \tr \left( \inv{\hF} \hnab \hv \right) \hJf 
	= \Div \bv \hJf 
	= 0,
\end{align*}
which means
\begin{equation}
	\label{Jf=1}
	\hJf = \rv{\hJf}_{t = 0} = \det \bbi = 1, \quad \text{in}\ \Of.
\end{equation}
For the solid part, since the deformation from natural configuration $ \Osg $ to deformed configuration $ \Ost $ conserves mass, incompressibility yields
\begin{equation*}
	\hJse = 1, \quad \text{in}\ \Os,
\end{equation*}
and hence,
\begin{equation*}
	\hJs = \hJsg = \hg^3, \quad \text{in}\ \Os.
\end{equation*}

Now combining formulas \eqref{gLagragian}, \eqref{equalofv}--\eqref{Jf=1}, we rewrite the fluid–structure interaction problem \eqref{Fluid}--\eqref{initial-g} in Lagrangian coordinate and rearrange the equation to obtain the system for $ t \in J := (0, T) $, $ T > 0 $,
\begin{align} 
	\left.
		\begin{aligned}
			\hrf \pt \hvf - \hdiv \bS( \hvf, \hpif ) & = \bK_f \\
			\hdiv \hvf & = G_f
		\end{aligned}
	\right\} & \quad \text{in}\ \Of \times J,  \nonumber \\
	\left.
		\begin{aligned}
			\hrs \pt \hvs - \hdiv \bS( \hvs, \hpis ) = \bar{\bK}_s + \bK_s^g =: \bK_s \\
			\hdiv \hvs - \frac{\gamma \beta}{\hrs} \hcs = G_s
		\end{aligned}
	\right\} & \quad \text{in}\ \Os \times J,  \nonumber \\
	\jump{\hv} = 0, \quad
	\jump{\bS( \hv, \hpi )} \hn_\Sigma = \bH^1 & \quad \text{on}\ \Sigma \times J, \nonumber \\
	\cP_G(\hv) = \bH^2, \quad
	\bS( \hv, \hpi ) \hn_G \cdot \hn_G = H^3 & \quad \text{on}\ G \backslash \Sigma \times J,  \nonumber \\
	\hvs = 0 & \quad \text{on}\ S \times J,  \nonumber \\
	\rv{\hv}_{t = 0} = \vo & \quad \text{in}\  \OM \backslash \Sigma, 	\label{Nonlinear: Lagrangian} \\
	\pt \hcf - \hDf \hDelta \hcf = F_f^1 & \quad \text{in}\ \Of \times J,  \nonumber \\
	\pt \hcs - \hDf \hDelta \hcs = \bar{F}_s^1+ F_s^g =: F_s^1 & \quad \text{in}\ \Os \times J,  \nonumber \\
	\left.
		\begin{aligned}
			\hDf \hnab \hcf \cdot \hn_\Sigma = \hDs \hnab \hcs \cdot \hn_\Sigma + \bar{F}_f^2 =: F_f^2 \\
			\hDs \hnab \hcs \cdot \hn_\Sigma = \zeta \jump{\hc} + \bar{F}_s^2 =: F_s^2
		\end{aligned}
	\right\} & \quad \text{on}\ \Sigma \times J,  \nonumber \\
	\hD \hnab \hc \cdot \hn_G = F^3 & \quad \text{on} \ G \backslash \partial \Sigma \times J,  \nonumber \\
	\hDs \hnab \hcs \cdot \hn_S = F^4 & \quad \text{on}\ S \times J,  \nonumber \\
	\rv{\hc}_{t = 0} = \co & \quad \text{in}\  \OM \backslash \Sigma, \nonumber \\
	\pt \hcss - \beta \hcs = F^5, \quad 
	\pt \hg - \frac{\gamma \beta}{n \hrs} \hcs = F^6 & \quad \text{in}\  \Os \times J,  \nonumber \\
	\rv{\hcss}_{t = 0} = 0, \quad \rv{\hg}_{t = 0} = 1 & \quad \text{in}\  \Os, \nonumber
\end{align}
where $ \bS( \hv, \hpi ) = - \hpi \bbi + \hnu \left( \hnab \hv + \tran{\hnab}\hv \right) $
and
\begin{align}
	& \bK_f = \hdiv \kf, \quad \bar{\bK}_s = \hdiv \ks, \quad \bK_s^g = - \left( \hsigs \invtr{\hFs} \right) \frac{n \hnab \hg}{\hg}, \nonumber \\
	& G = - \left( \FOinvtran \right) : \hnab \hv, \quad \bH^1 = - \jump{\tk} \hn_\Sigma, \nonumber \\
	& \bH^2 = - \left( \bbi - \big((\invtr{\hF} \hn_G) \otimes (\invtr{\hF} \hn_G) - \hn_G \otimes \hn_G \big) \right) \hv, \nonumber \\
	& H^3 = - \hsigma \invtr{\hF} \hn_G \cdot (\invtr{\hF} \hn_G) + \bS(\hv, \hpi) \hn_G \cdot \hn_G, \quad
	F_f^1 = \hdiv \tFf, \label{Nonlinear} \\ 
	& \bar{F}_s^1 = \hdiv \tFs, \quad 
	F_s^g = - \beta \hcs \left( 1 + \frac{\gamma}{\hrs} \hcs \right) - \frac{n \hnab \hg}{\hg} \cdot \left( \hDs \inv{\hFs} \invtr{\hFs} \hnab \hcs \right), \nonumber \\
	& \bar{F}_f^2 = - \jump{\tF} \cdot \hn_\Sigma, \quad \bar{F}_s^2 = - \tFs \cdot \hn_\Sigma, \quad F^3 = \tF \cdot \hn_G, \quad F^4 = -\tFs \cdot \hn_S, \nonumber \\
	& F^5 = - \frac{\gamma \beta}{\hrs} \hcs \hcss, \quad
	F^6 = - \frac{\gamma \beta}{n \hrs} \hcs \left( \hg - 1 \right), \nonumber 
\end{align}
with
\begin{align*}
	& \hsigf = - \hpif \bbi + \hnuf \left( \inv{\hFf} \hnab \hvf + \tran{\hnab} \hvf \invtr{\hFf} \right), \quad
	\hsigs = \hsigse + \hsigsv, \\
	& \hsigse  = - \hpis \bbi + \hmus \left( \frac{1}{(\hg)^2} \hFs \tran{\hFs} - \bbi \right), \quad
	\hsigsv = \hnus \left( \hnab \hvs + \tran{\hnab} \hvs \right) \tran{\hFs}, \\
	& \kf = - \hpif \left( \FfOinvtran \right) 
	+ \nuf \left( \inv{\hFf} \hnab \hvf + \tran{\hnab} \hvf \invtr{\hFf} \right) \left( \FfOinvtran \right) \\
	& \qquad \qquad \qquad \qquad \qquad + \nuf \left( \left(\FfOinv\right) \hnab \hvf + \tran{\hnab} \hvf  \left(\FfOinvtran\right)\right), \\
	& \ks = - \hpis \left(\FsOinvtran\right) + \mus \left( \frac{1}{\hg^2} \left( \FsO \right) + \left( \frac{1}{\hg^2} - 1 \right) \bbi - \left( \FsOinvtran \right) \right), \\
	& \tF = \hD \left( \inv{\hF} \invtr{\hF} - \bbi \right) \hnab \hc.
\end{align*}
For \eqref{Nonlinear: Lagrangian}, we introduce the corresponding function spaces for the solutions
\begin{gather*}
	\bbe_{\hv}(J) := \left\{
		\bv \in \W{1}(J; \Lq{q}(\OM)^3) \cap \Lq{q}(J; \W{2}(\OM \backslash \Sigma)^3):  \jump{\bv} = 0, \ \rv{\bv}_S = 0
	\right\}, \\
	\bbe_{\hpi}(J) := \left\{
	\begin{aligned}
		& \pi \in \Lq{q}(J; \dW{1}(\OM \backslash \Sigma)): \\
		& \qquad \qquad
		\jump{\pi} \in \W{\onehalf - \frac{1}{2q}}(J; \Lq{q}(\Sigma)) \cap \Lq{q}(J; \W{1 - \frac{1}{q}}(\Sigma)), \\
		& \qquad \qquad
		\rv{\pi}_{G} \in \W{\onehalf - \frac{1}{2q}}(J; \Lq{q}(G)) \cap \Lq{q}(J; \W{1 - \frac{1}{q}}(G \backslash \partial \Sigma))
	\end{aligned}
	\right\}, \\
	\bbe_{\hc}(J) := \W{1}(J; \Lq{q}(\OM)) \cap \Lq{q}(J; \W{2}(\OM \backslash \Sigma)), \\
	\bbe_{\hcss}(J) := \W{1}(J; \W{1}(\Os)), \quad \bbe_{\hg}(J) := \W{1}(J; \W{1}(\Os)), \\
	\bbe(J) := \bbe_{\hv} \times \bbe_{\hpi} \times \bbe_{\hc} \times \bbe_{\hcss} \times \bbe_{\hg},
\end{gather*}
as well as the spaces for initial data
\begin{equation*}
	X_{\gamma, \hv} := \W{2 - \frac{2}{q}}(\OM \backslash \Sigma)^3, \quad X_{\gamma, c} := \W{2 - \frac{2}{q}}(\OM \backslash \Sigma), \quad
	X_\gamma := X_{\gamma, \hv} \times X_{\gamma, c}.
\end{equation*}
Then the main result of \eqref{Nonlinear: Lagrangian} reads as follows.
\begin{theorem} \label{Theorem: main}
	Let $ q >5 $, $ \OM \subset \bbr^3 $ be a domain defined as above with $ \Sigma, G, S $ of class $ C^3 $ and $ \partial G $, $ \partial \Sigma $ of $ C^4 $ as well. Given $ (\vo, \co) \in X_\gamma $ satisfying the compatibility conditions 
	\begin{gather*}
		\Div \vo = 0, \ 
		\cP_G(\vo)\big|_G = 0, \ 
		\vo\big|_S = 0, \  
		\jump{\vo} = 0, \ 
		\PSigma \jump{\mu (\nabla \vo + \tran{\nabla} \vo) \nuSigma} = 0, \\
		\zeta \jump{\co} - \hDs \hnab \co_s \cdot \hn_\Sigma = 0, \ 
		\jump{\hD \hnab \co} \cdot \hn_\Sigma = 0, \ 
		\hD \hnab \co \cdot \hn_G \big|_{G \backslash \partial \Sigma} = 0.
	\end{gather*}
	Then there exists a positive $ T_0 = T_0(\|(\vo, \co)\|_{X_\gamma}) < \infty $ such that for $ J := [0,T] $, $ 0 < T < T_0 $, \eqref{Nonlinear: Lagrangian} admits a unique solution $ (\hv, \hpi, \hc, \hcss, \hg) \in \bbe(J) $. 
\end{theorem}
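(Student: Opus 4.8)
The plan is to recast the nonlinear system \eqref{Nonlinear: Lagrangian} as a fixed-point equation and to close it by the contraction mapping principle, taking the linear well-posedness results of Subsection~\ref{Subsection: Linearized system-Lagrangian}---established through the model-problem analysis of Sections~\ref{Section: Model problems}--\ref{Section: cylindrical domain}---as the solution operator. The guiding observation is that every right-hand side in \eqref{Nonlinear}, namely $\bK$, $G$, $\bH^1$, $\bH^2$, $H^3$ and $F^1,\dots,F^6$, is genuinely lower order relative to the principal linear part and carries a small factor for short times: either through the time integral $\FO = \int_0^t \hnab\hv\,\d\tau$ defining the deformation gradient, through $\tfrac{1}{\hg}-1$, or through an explicit power $\TD$ gained by H\"older's inequality in time. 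Several of these forcings ($\bK_f$, $\bar\bK_s$, $F_f^1$, $\bar F_s^1$) are written in divergence form precisely so that they land in the data classes required by the linear two-phase Stokes and parabolic transmission theory.

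First I would reduce to vanishing initial data. Given $(\vo,\co)\in X_\gamma$ meeting the stated compatibility conditions, I would fix a reference state by solving the homogeneous linear two-phase Stokes problem and the homogeneous linear parabolic transmission problem with the prescribed initial data and zero forcing, and by taking $\hcss$ and $\hg$ equal to their initial values $0$ and $1$. Writing each unknown as this reference plus a perturbation places the perturbations in the subspaces of $\bbe(J)$ with vanishing time trace, which is exactly where the maximal-$L^q$-regularity estimates are uniform in $T$ (cf.\ \cite{PS2016,Wilke2020}). The compatibility conditions enter here: they guarantee that the initial data are admissible for the linear evolution and that the reflected model-problem data satisfy the trace-compatibility requirements of the linear theory.

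Next, on a closed ball $\overline{B}_r\subset\bbe(J)$ whose radius $r$ is fixed by $\|(\vo,\co)\|_{X_\gamma}$, I would define the map $\Phi$ as follows: from a given $z=(\bv,\pi,c,c^*,g)\in\overline{B}_r$ compute all nonlinearities of \eqref{Nonlinear}, then solve the linear blocks in sequence---the two-phase Stokes system for $(\hv,\hpi)$, the parabolic transmission system for $\hc$, and finally the scalar equations for $(\hcss,\hg)$ (which use the freshly obtained $\hcs$)---to produce $\Phi(z)$. The argument rests on two nonlinear estimates. For the self-mapping property one shows $\|\mathrm{RHS}(z)\|_{\text{data}}\le C(r)\,\TD$, using that $\FOinv$ and $\FOinvtran$ are $O(\TD)$ in the maximal-regularity norm (so that $\hF$ is invertible and close to $\bbi$), that $\tfrac{1}{\hg}-1$ is likewise small, and that products of elements of $\bbe(J)$ remain in $\bbe(J)$ because $q>5$ forces the embedding $\W{2-\frac{2}{q}}(\OM\backslash\Sigma)\hookrightarrow C^1$ together with the algebra property of the trace spaces. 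For the contraction property one bounds $\mathrm{RHS}(z_1)-\mathrm{RHS}(z_2)$ by $C(r)\,\TD\,\|z_1-z_2\|_{\bbe(J)}$, exploiting the multilinear dependence of each nonlinearity on its arguments and the Lipschitz dependence of $\inv{\hF}$ and $\tfrac{1}{\hg}$ on $z$.

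The hard part will be the nonlinear estimates for the boundary and interface data---$\bH^2$, $H^3$ on $G$ and $F_f^2$, $F_s^2$ on $\Sigma$---which must be controlled in the anisotropic trace spaces $\W{\onehalf-\frac{1}{2q}}(J;\Lq{q})\cap\Lq{q}(J;\W{1-\frac{1}{q}})$ appearing in $\bbe_{\hpi}(J)$ and in the heat-equation data. These require careful product and commutator estimates in fractional Sobolev--Slobodeckij spaces together with the mapping properties of the trace operator, and one must check that the small-time gain $\TD$ survives the fractional time regularity. The genuine contact-angle difficulty is, however, already absorbed into the linear solvability of Sections~\ref{Section: Model problems}--\ref{Section: cylindrical domain}, so what remains is this (substantial but structurally standard) bookkeeping. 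Once both estimates are in hand, choosing $T_0$ so small that $C(r)\,T_0^{\delta}<1$ and that $\Phi(\overline{B}_r)\subseteq\overline{B}_r$ yields a unique fixed point in $\overline{B}_r$, which together with the reference state furnishes the unique solution $(\hv,\hpi,\hc,\hcss,\hg)\in\bbe(J)$.
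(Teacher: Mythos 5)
Your proposal is correct and follows essentially the same route as the paper: reduce to vanishing time trace via a reference state with the prescribed initial data, write the system abstractly as $\sL(\bw)=\sN(\bw,\wo)$ with $\sL$ the isomorphism furnished by Corollary \ref{Corollary: linear part} and Theorems \ref{Theorem: Cylinder-Heat_f}--\ref{Theorem: Cylinder-Heat_s}, and close by Banach's fixed-point theorem on a ball using the small-time factor $\TD$ gained from $\hF-\bbi=\int_0^t\hnab\hv\,\d\tau$ and from H\"older in time (Proposition \ref{Propostion: contraction}). The paper likewise isolates the estimates of $\bH^2$ and $H^3$ in the anisotropic trace norms as the only genuinely new bookkeeping relative to \cite{AL2021}, exactly the point you flag as the hard part.
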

The proof of Theorem \ref{Theorem: main} relies on the Banach fixed-point theorem. More specifically, we need to prove an isomorphism of linear part and construct a contraction mapping from the solutions to the nonlinear data.
\begin{remark}
	We comment here that in \cite{AL2021}, it was shown that the cells concentrations are nonnegative, which is not clear in the present case. The problem comes up with the contact line formulated in the problem, that is, the boundaries for both $ c_f $ and $ c_s $ are not smooth anymore.
\end{remark}

\subsection{Linearization} \label{Subsection: Linearized system-Lagrangian}
Let $ J := (0,T) $ with $ T > 0 $. Now we consider the linear systems separately, namely,

1). \textbf{Two-phase Stokes problem in a cylindrical domain}
\begin{equation} \label{Linearized TwoPhase}
	\begin{alignedat}{3}
		\rho \pt u - \Div \Smu(u, \pi) & = f_u, && \tin \OM \backslash \Sigma \times J, \\
		\Div u & = f_d, && \tin \OM \backslash \Sigma \times J, \\
		\jump{u} & = g_1, && \ton \Sigma \times J, \\
		\jump{- \pi \bbi + \mu (\nabla u + \tran{\nabla} u)} \nuSigma & = g_2, && \ton \Sigma \times J, \\
		\tran{(u_1, u_3)} & = g_3, && \ton G \backslash \partial \Sigma \times J, \\
		- \pi + 2 \mu \ptial{2} u_2 & = g_4, && \ton G \backslash \partial \Sigma \times J, \\
		u & = g_5, && \ton S \times J, \\
		u(0) & = u_0, && \tin \OM \backslash \Sigma.
	\end{alignedat} 
\end{equation}
where $ \Smu(u, \pi) = - \pi \bbi + \mu ( \nabla u + \tran{\nabla} u ) $. $ \rho, \mu > 0 $ are constants representing the density and viscosity respectively. $ \nuSigma $ denotes the unit normal vector on $ \Sigma $, pointing from $ \Of $ to $ \Os $.

For the maximal regularity setting, we define the suitable function spaces for solutions and data with $ q > 3 $, 
\begin{gather*}
	u \in \bbe_u(J) : = \W{1}(J; \Lq{q}(\OM)^3) \cap \Lq{q}(J; \W{2}(\OM \backslash \Sigma)^3), \\
	\pi \in \bbe_\pi(J) := \left\{
		\begin{aligned}
			& \pi \in \Lq{q}(J; \dW{1}(\OM \backslash \Sigma)): \\
			& \qquad \qquad  
			\jump{\pi} \in \W{\onehalf - \frac{1}{2q}}(J; \Lq{q}(\Sigma)) \cap \Lq{q}(J; \W{1 - \frac{1}{q}}(\Sigma)), \\
			& \qquad \qquad  
			\rv{\pi}_{G} \in \W{\onehalf - \frac{1}{2q}}(J; \Lq{q}(G)) \cap \Lq{q}(J; \W{1 - \frac{1}{q}}(G \backslash \partial \Sigma))
		\end{aligned}
	\right\}.
\end{gather*}
The additional regularities for $ \pi $ on $ \Sigma $ and $ G $ come from the Neumann trace of $ u $ on both boundaries. If $ (u, \pi) $ is a solution of \eqref{Linearized TwoPhase}, the necessary regularity classes for the data are subsequently given by
\begin{gather*}
	u_0 \in X_{\gamma, u} := \W{2 - \frac{2}{q}}(\OM \backslash \Sigma)^3, \quad
	f_u \in \bbf_1(J) := \Lq{q}(J; \Lq{q}(\OM)^3), \\
	f_d \in \bbf_2(J) := \W{1}(J; \dW{-1}(\OM)) \cap \Lq{q}(J; \W{1}(\OM \backslash \Sigma)), \\
	g_1 \in \bbf_3(J) := \W{1 - \frac{1}{2q}}(J; \Lq{q}(\Sigma)^3) \cap \Lq{q}(J; \W{2 - \frac{1}{q}}(\Sigma)^3), \\
	g_2 \in \bbf_4(J) := \W{\onehalf - \frac{1}{2q}}(J; \Lq{q}(\Sigma)^3) \cap \Lq{q}(J; \W{1 - \frac{1}{q}}(\Sigma)^3), \\
	g_3 \in \bbf_5(J) := \W{1 - \frac{1}{2q}}(J; \Lq{q}(G)^2) \cap \Lq{q}(J; \W{2 - \frac{1}{q}}(G \backslash \partial \Sigma)^2), \\
	g_4 \in \bbf_6(J) := \W{\onehalf - \frac{1}{2q}}(J; \Lq{q}(G)) \cap \Lq{q}(J; \W{1 - \frac{1}{q}}(G \backslash \partial \Sigma)), \\
	g_5 \in \bbf_7(J) := \W{1 - \frac{1}{2q}}(J; \Lq{q}(S)) \cap \Lq{q}(J; \W{2 - \frac{1}{q}}(S)^3).
\end{gather*}

Then it is necessary to consider the compatibility conditions at $ t = 0 $, that is,
\begin{equation} \label{Compatibility: TwoPhase-initial-cylinder}
	\begin{gathered}
		\Div u_0 = \rv{f_d}_{t = 0}, \quad
		\tran{((u_0)_1, (u_0)_3)}|_G = \rv{g_3}_{t = 0}, \quad
		\rv{u_0}_S = \rv{g_5}_{t = 0}, \\ 
		\jump{u_0} = \rv{g_1}_{t = 0}, \quad
		\PSigma \jump{\mu (\nabla u_0 + \tran{\nabla} u_0) \nuSigma} = \rv{\PSigma g_2}_{t = 0},
	\end{gathered}
\end{equation}
where $ \PSigma := I - \nuSigma \otimes \nuSigma $ denotes the tangential projection of $ \nuSigma $.

Moreover, the following compatibility conditions at the contact lines $ \partial G $ and $ \partial \Sigma $ must be satisfied as well.
\begin{equation} \label{Compatibility: TwoPhase-contact-cylinder}
	\begin{gathered}
		g_3 = \tran{((g_5)_1, (g_5)_3)}, 
		\text{ at } \partial G, \quad
		\jump{g_3} = \tran{((g_1)_1, (g_1)_3)},
		\text{ at } \partial \Sigma, \\
		\begin{aligned}
			(g_2)_1 & = \jump{2 \mu \ptial{1} (g_3)_1 \nuSigma \cdot e_1 + \mu (\ptial{1} (g_3)_2 + \ptial{3} (g_3)_1) \nuSigma \cdot e_3} \\
			& \quad + \jump{g_4 - 2 \mu f_d} \nuSigma \cdot e_1 + \jump{2 \mu (\ptial{1} (g_3)_1 + \ptial{3} (g_3)_2)} \nuSigma \cdot e_1,
			\text{ at } \partial \Sigma,
		\end{aligned} \\
		\begin{aligned}
			(g_2)_3 & = \jump{2 \mu \ptial{3} (g_3)_2 \nuSigma \cdot e_3 + \mu (\ptial{1} (g_3)_2 + \ptial{3} (g_3)_1) \nuSigma \cdot e_1} \\
			& \quad + \jump{g_4 - 2 \mu f_d} \nuSigma \cdot e_3 + \jump{2 \mu (\ptial{1} (g_3)_1 + \ptial{3} (g_3)_2)} \nuSigma \cdot e_3,
			\text{ at } \partial \Sigma.
		\end{aligned}
	\end{gathered}
\end{equation}
\begin{remark}
	For $ f_d $, we assume the regularity $ \W{1}(J; \dW{-1}(\OM)) $, which is due to the divergence equation in \eqref{Linearized TwoPhase} and the regularity $ u \in \W{1}(J; \Lq{q}(\OM)) $. A similar but different condition was employed in Wilke \cite[Section 1.2]{Wilke2020}, where they considered the Dirichlet and pure-slip boundary conditions around the contact line. Such setting gives rise to a compatibility identity for $ f_d $ and data associated with the normal component of velocities on each boundary and the interface. Readers are also referred to e.g. Pr\"uss--Simonett \cite[Section 7.3 and (8.2)]{PS2016}. The difference comes up with the consideration of outflow boundary condition, which is not endowed with the normal component of velocity. In our previous work \cite{AL2021}, the stress-free boundary is considered and hence we also have an additional regularity for $ f_d $ without the hidden compatibility identity, see also Pr\"uss--Simonett \cite[Page 338]{PS2016}.
\end{remark}

Let $ \bbe(J) := \bbe_u(J) \times \bbe_\pi(J) $ and $ \bbf(J) := \Pi_{j = 1}^7 \bbf_j(J) $. Now we give the theorem for the two-phase Stokes problem in a cylindrical domain, whose proof is postponed in Section \ref{Subsection: Localization procedure}.
\begin{theorem} \label{Theorem: Cylinder-TwoPhase}
	Let $ \rho, \mu > 0 $, $ q > 5 $. Assume that $ \OM \subset \bbr^3 $ is the domain defined in Theorem \ref{Theorem: main} with $ \Sigma, G, S $ of class $ C^3 $ and $ \partial G $, $ \partial \Sigma $ of $ C^4 $ as well. Then there exists a unique solution
	\begin{gather*}
		(u, \pi) \in \bbe(J)
	\end{gather*}
	of \eqref{Linearized TwoPhase} if and only if the data are subject to the following regularity and compatibility conditions:
	\begin{enumerate}
		\item $ (f_u, f_d, g_1, g_2, g_3, g_4, g_5) \in \bbf(J) $, 
		\item $ u_0 \in X_{\gamma,u} $, 
		\item compatibility conditions \eqref{Compatibility: TwoPhase-initial-cylinder} and \eqref{Compatibility: TwoPhase-contact-cylinder} hold.
	\end{enumerate}
\end{theorem}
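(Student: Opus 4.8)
The plan is to prove the two implications separately: necessity follows from trace theory, while sufficiency is obtained by a localization procedure resting on the model-problem solvability of Section \ref{Section: Model problems}. For necessity, I would read off the data regularity from a given solution $(u,\pi) \in \bbe(J)$ using the (anisotropic) trace theory for the Sobolev--Slobodeckij spaces in play. The bulk terms $f_u = \rho \pt u - \Div \Smu(u,\pi)$ and $f_d = \Div u$ inherit their classes directly from $u$ and $\pi$, while $g_1,\dots,g_5$ are spatial and temporal traces of $u$ and of the stress $\Smu(u,\pi)$ on $\Sigma$, $G$ and $S$; their natural trace spaces are exactly $\bbf_3,\dots,\bbf_7$. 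The initial compatibility \eqref{Compatibility: TwoPhase-initial-cylinder} comes from evaluating the spatial traces at $t=0$ (legitimate since $q>5$ places us well inside the range where the temporal trace at $t=0$ exists and commutes with the spatial traces), and the contact-line compatibility \eqref{Compatibility: TwoPhase-contact-cylinder} from taking iterated traces onto $\partial G$ and $\partial\Sigma$ and matching the projected velocity- and stress-relations there.

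The sufficiency direction is the heart of the matter. I would first extract the additional regularity of the pressure: taking the divergence of the momentum equation and inserting $\Div u = f_d$ shows that $\pi$ solves a weak elliptic transmission problem across $\Sigma$, carrying a Dirichlet-type condition on $G$ inherited from the outflow relation $-\pi + 2\mu\ptial{2} u_2 = g_4$ and a Neumann condition on $S$. Solvability and regularity for such problems in the cylindrical domain --- the auxiliary transmission problems of Appendix \ref{AppSection: Auxiliary transmission problems} --- deliver precisely the classes $\jump{\pi}$ and $\rv{\pi}_{G}$ demanded in $\bbe_\pi(J)$. With the pressure thus controlled, I would run a reduction argument: using the solution operators of the auxiliary elliptic and parabolic transmission problems together with temporal extension operators, I subtract off suitable correctors to reduce to the case $u_0 = 0$, $f_d = 0$ and vanishing interface/boundary data, so that it remains to solve a reduced problem driven only by $f_u$ with trivial traces.

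For the reduced problem I would localize. Fixing a finite atlas of $\Bar{\OM}$ and a subordinate partition of unity (the content of Appendix \ref{AppSection: Partition}), each piece $\psi_k u$ satisfies, modulo lower-order commutators, one of the model problems: a full-space Stokes problem in the interior of $\Of$ or $\Os$; a (bent) half-space two-phase Stokes problem near the interior of $\Sigma$; a (bent) half-space Stokes problem with the outflow condition near $G$ or with the Dirichlet condition near $S$; and, at the contact lines $\partial\Sigma$ and $\partial S$, a (bent) quarter-space problem. The quarter-space problems are solved by reflection across $G$: thanks to the ninety-degree contact angle and the outflow structure, extending $u_1,u_3$ oddly and $u_2,\pi$ evenly in $x_2$ preserves both the divergence constraint and the interface relations \eqref{vjump}--\eqref{sigjump}, converting each quarter-space problem into a half-space two-phase Stokes problem whose solvability is already established. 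Recombining the local solutions against the partition of unity and transforming back produces an approximate solution; the commutators, being of lower order, are absorbed on a short time interval by a Neumann series, which yields the unique $(u,\pi) \in \bbe(J)$.

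The hard part will be the contact-line analysis together with the bookkeeping that makes the reflection compatible with all boundary conditions at once. In the (bent) quarter space one must check that the even/odd extension simultaneously respects the transmission conditions on the $\Sigma$-face, the outflow relations on the $G$-face and the divergence constraint, and that the extended data land in the correct model-problem trace spaces --- this is exactly where the contact compatibility \eqref{Compatibility: TwoPhase-contact-cylinder} enters. The second delicacy is the pressure: since $\pi$ is only determined through a transmission problem coupling both phases across $\Sigma$, its extra regularity on $G$ and the perturbation estimates for the commutators must be tracked in the homogeneous scale $\dW{1}(\OM\setminus\Sigma)$, where localization is not automatic; securing these bounds uniformly so that the Neumann series converges on a (possibly small) time interval is the crux.
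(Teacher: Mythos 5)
Your overall strategy is the one the paper follows (reduction via the auxiliary elliptic/parabolic transmission problems of Appendix \ref{AppSection: Auxiliary transmission problems}, localization with the partition of unity of Appendix \ref{AppSection: Partition}, reflection-based model problems at the contact lines, and a Neumann series in time), but two of your concrete steps would fail as written. First, the parity of the pressure under reflection across $G$ is wrong: you extend $u_1,u_3$ oddly and $u_2,\pi$ evenly in $x_2$, whereas the pressure must be extended \emph{oddly}, $\bar\pi(t,x)=-\pi(t,x_1,-x_2,x_3)$. With an even $\pi$ the first and third momentum equations are not preserved (one picks up an extra $2\,\ptial{1}\pi$, resp.\ $2\,\ptial{3}\pi$, at the reflected point), so the reflected pair is not a solution of the reflected problem and the uniqueness argument that is supposed to recover the outflow conditions collapses; moreover one needs $\rv{\pi}_{G}=0$ together with $\rv{\ptial{2}u_2}_{G}=0$ to recover $-\pi+2\mu\ptial{2}u_2=0$ on $G$, and only the odd extension delivers $\rv{\pi}_{G}=0$. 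This is exactly the choice made in Sections \ref{Section: Stokes-quarter} and \ref{Section: Twophase-halfspace}.

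Second, you treat all localization commutators as ``lower-order terms absorbed by the Neumann series.'' That is not true of the divergence commutator $F_{dk}(\tu)=\tu\cdot\nabla\vp_k$: it has no smallness in the $\W{1}(J;\dW{-1}(\OM))$ component of $\bbf_2(J)$ as $T\to 0$, so it cannot be swallowed by choosing $T$ small. The paper removes it chart by chart by solving the auxiliary elliptic transmission problem \eqref{LocalizationReduction: Elliptic transmission} and correcting $u$ by $\nabla\phi_k$; only the remaining commutators ($[\nabla,\vp_k]\tpi$, $[\Delta,\vp_k]\tu$, the boundary terms) gain a factor $T^\delta$, and even for these one needs the \emph{extra time regularity} $\pi\in\WO{\alpha}(J;\Lq{q}(\OM))$ of Proposition \ref{Proposition: pressure regularity}, proved by duality against the adjoint elliptic transmission problem under the reductions $(u_0,f_d,g_1\cdot\nuSigma,g_5\cdot\nu_S)=0$. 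Your description of the pressure step (an elliptic problem for $\pi$ yielding the trace classes of $\bbe_\pi$) conflates the definition of $\bbe_\pi$ — whose interface and boundary regularity already comes from the Neumann traces of $u$ — with the genuinely needed H\"older-in-time bound on $\pi$ in $\Lq{q}(\OM)$; without the latter the Neumann series does not close.
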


2). \textbf{Heat equation in a cylindrical domain}
\begin{equation} \label{Linearized Heat_f}\\
	\begin{alignedat}{3}
		\pt c_f - D_f \Delta c_f & = f_{c,f}, && \tin \Of \times J, \\
		D_f \nabla c_f \cdot \nu_{G_f \cup \Sigma} & = g_6, && \ton G_f \cup \Sigma \times J, \\
		c_f(0) & = c_{0,f}, && \tin \Of,
	\end{alignedat} 
\end{equation}
where $ D_f > 0 $ is the diffusivity. As above, we need the regularity classes of solutions and data for the maximal regularity setting. More specifically, 
\begin{gather*}
	c_f \in \bbe_{c,f}(J) := \W{1}(J; \Lq{q}(\Of)) \cap \Lq{q}(J; \W{2}(\Of)),
\end{gather*}
with the data
\begin{gather*}
	c_{0,f} \in X_{\gamma, c_f} := \W{2 - \frac{2}{q}}(\Of), \quad
	f_{c,f} \in \bbf_8(J) := \Lq{q}(J; \Lq{q}(\Of)), \\
	g_6 \in \bbf_9(J) := \W{\onehalf - \frac{1}{2q}}(J; \Lq{q}(G_f \cup \Sigma)) \cap \Lq{q}(J; \W{1 - \frac{1}{q}}(G_f \cup \Sigma)).
\end{gather*}
Moreover, concerning the compatibility conditions at $ t = 0 $ and the contact line $ \partial \Sigma \cap G_{1,f} $, for $ q > 3 $ we have
\begin{equation} \label{Compatibility: Cylinder-initial-Heat_f}
	\begin{gathered}
		\rv{D_f \nabla c_{0,f} \cdot \nu_{G_2 \cup \Sigma}}_{G_2 \cup \Sigma} = \rv{g_6}_{t = 0}, \text{ on } G_f \cup \Sigma, \\
		\ptial{\nu_{G_f}}(\rv{g_6}_{\Sigma}) = \ptial{\nuSigma}(\rv{g_6}_{G_f}), \text{ at } \partial \Sigma.
	\end{gathered}
\end{equation}
\begin{theorem} \label{Theorem: Cylinder-Heat_f}
	Let $ D_f > 0 $, $ q > 5 $. Assume that $ \Of \subset \bbr^3 $ is the domain defined in Theorem \ref{Theorem: main} with $ \Sigma, G_f $ of class $ C^3 $ and $ \partial \Sigma $ of $ C^4 $ as well. Then there exists a unique solution
	\begin{equation*}
		c_f \in \bbe_{c,f}(J)
	\end{equation*}
	of \eqref{Linearized Heat_f} if and only if the data are subject to the following regularity and compatibility conditions:
	\begin{enumerate}
		\item $ (f_{c,f}, g_6) \in \bbf_8(J) \times \bbf_9(J) $, 
		\item $ c_{0,f} \in X_{\gamma, c_f} $, 
		\item compatibility condition \eqref{Compatibility: Cylinder-initial-Heat_f} holds.
	\end{enumerate}
\end{theorem}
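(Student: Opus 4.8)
The plan is to prove Theorem~\ref{Theorem: Cylinder-Heat_f} by the same localization-and-reflection scheme used for the two-phase Stokes problem, noting that the present situation is considerably simpler: the equation is scalar, there is no pressure and no divergence constraint, and $\Sigma$ enters only as a piece of $\partial\Of$ carrying a Neumann (conormal) condition rather than as a genuine interface. The necessity of the regularity and compatibility conditions (the ``only if'' direction) is a direct consequence of the trace theory for the anisotropic maximal-regularity space $\W{1}(J;\Lq{q}(\Of)) \cap \Lq{q}(J;\W{2}(\Of))$: the spatial conormal trace of $c_f$ on $G_f \cup \Sigma$ lands exactly in $\bbf_9(J)$, the initial value lands in $X_{\gamma,c_f}$, and evaluating the boundary condition at $t=0$ and along the edge $\partial\Sigma$ yields \eqref{Compatibility: Cylinder-initial-Heat_f}. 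The substance is therefore the sufficiency direction, i.e.\ constructing a solution operator.

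For sufficiency I would first fix a finite cover of $\overline{\Of}$ subordinate to a partition of unity (cf.\ Appendix~\ref{AppSection: Partition}) of three types: interior balls, balls meeting exactly one smooth face ($G_f$ or $\Sigma$ away from the contact line), and balls centered on the contact line $\partial\Sigma$. Flattening each chart by a diffeomorphism --- of class $C^3$ for the faces and $C^4$ near $\partial\Sigma$, matching the assumed regularity --- transforms $D_f\Delta$ into a second-order operator whose principal part differs from $D_f\Delta$ by coefficients that are small on small balls, and produces three model problems: (a) the full-space heat equation, (b) the half-space heat equation with an inhomogeneous Neumann condition, and (c) the quarter space $\{x_1>0,\,x_2>0\}$ with Neumann conditions on the two faces $\{x_1=0\}$ and $\{x_2=0\}$ meeting at the edge $\{x_1=x_2=0\}$. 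Problems (a) and (b) enjoy maximal $\Lq{q}$-regularity by the classical theory (the Neumann Lopatinskii--Shapiro condition holds); see \cite{PS2016}.

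The crux is the quarter-space model (c), which I would solve by reflection, exactly the mechanism highlighted in the introduction and in \cite{Wilke2020}. After reducing the Neumann datum on $\{x_1=0\}$ to zero by subtracting a suitable extension, I extend the problem by \emph{even} reflection $c_f(-x_1,x_2,x_3):=c_f(x_1,x_2,x_3)$; since $\partial_1 c_f$ is then odd, the homogeneous Neumann condition on $\{x_1=0\}$ is automatically respected, and the reflected function solves the heat equation on the half-space $\{x_2>0\}$ with the evenly extended datum $g_6$ on $\{x_2=0\}$. Solving this by the half-space result (b) and restricting back to $\{x_1>0\}$ yields the quarter-space solution, uniqueness being inherited from the model problem. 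The edge compatibility condition $\partial_{\nu_{G_f}}(g_6|_\Sigma)=\partial_{\nu_\Sigma}(g_6|_{G_f})$ at $\partial\Sigma$ is precisely the matching condition guaranteeing that the evenly reflected datum stays in the correct Sobolev--Slobodeckij class $\bbf_9$ across the reflection plane $\{x_1=0\}$, so that no spurious regularity loss occurs along the contact line.

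Finally I would assemble the local solution operators: applying the model-problem right inverses in each chart and summing against the partition of unity gives an approximate right inverse for the full operator on $\Of$, the commutator and lower-order perturbation terms being controllable. A Neumann series argument --- using that the perturbations are small for small radius and short time $T$ --- upgrades this to a genuine right inverse, hence to existence in $\bbe_{c,f}(J)$; uniqueness follows from the a~priori estimate obtained from the same local operators, or from the uniqueness of each model problem. I expect the main obstacle to be the rigorous treatment of step (c): making the even reflection compatible with the anisotropic time-space norms and confirming that the edge condition in \eqref{Compatibility: Cylinder-initial-Heat_f} is exactly what keeps the reflected data in $\bbf_9(J)$, together with checking that the $C^4$-flattening near $\partial\Sigma$ keeps the transformed principal part a genuinely small perturbation of $D_f\Delta$.
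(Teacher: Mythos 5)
Your proposal follows essentially the same route the paper takes: the paper omits the detailed proof of Theorem \ref{Theorem: Cylinder-Heat_f}, reducing it to a standard localization procedure built on the quarter-space Neumann model problem of Theorem \ref{Theorem: MP-Heat-quarterspace}, which is proved there by exactly the even-reflection-plus-uniqueness/symmetry argument you describe, with the bent charts handled by a perturbation/Neumann-series step as in Section \ref{Section: Heat-bent-quarter}. Your reading of the edge condition in \eqref{Compatibility: Cylinder-initial-Heat_f} as the matching condition that keeps the reflected datum in $\bbf_9(J)$ is also consistent with condition (6) of Theorem \ref{Theorem: MP-Heat-quarterspace}.
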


3). \textbf{Heat equation in a cylindrical ring}
\begin{equation} \label{Linearized Heat_s}
	\begin{alignedat}{3}
		\pt c_s - D_s \Delta c_s & = f_{c,s}, && \tin \Os \times J, \\
		D_s \nabla c_s \cdot \nu_{\partial \Os} & = g_7, && \ton \partial \Os \times J, \\
		c_s(0) & = c_{0,s}, && \tin \Os,
	\end{alignedat} 
\end{equation}
where $ \partial \Os := \Sigma \cup S \cup G_s $, $ D_s > 0 $ is the diffusivity. As in above, we need the regularity classes of solutions and data for the maximal regularity setting. More specifically, 
\begin{gather*}
	c_s \in \bbe_{c,s}(J) := \W{1}(J; \Lq{q}(\Os)) \cap \Lq{q}(J; \W{2}(\Os)),
\end{gather*}
with the data
\begin{gather*}
	c_{0,s} \in X_{\gamma, c_f} := \W{2 - \frac{2}{q}}(\Os), \quad
	f_{c,s} \in \bbf_{10}(J) := \Lq{q}(J; \Lq{q}(\Os)), \\
	g_7 \in \bbf_{11}(J) := \W{\onehalf - \frac{1}{2q}}(J; \Lq{q}(\partial \Os)) \cap \Lq{q}(J; \W{1 - \frac{1}{q}}(\partial \Os)).
\end{gather*}
Moreover, concerning the compatibility conditions at $ t = 0 $, for $ q > 3 $ we have
\begin{equation} \label{Compatibility: Cylinder-initial-Heat_s}
	\begin{gathered}
		\rv{D_f \nabla c_{0,s} \cdot \nu_{\partial \Os}}_{\partial \Os} = \rv{g_7}_{t = 0}, \\
		\ptial{\nu_{G_s}}(\rv{g_7}_{\Sigma}) = \ptial{\nuSigma}(\rv{g_7}_{G_s}), \text{ at } \partial \Sigma, \quad
		\ptial{\nu_{G_s}}(\rv{g_7}_{S}) = \ptial{\nu_S}(\rv{g_7}_{G_s}) \text{ at } \partial S.
	\end{gathered}
\end{equation}

\begin{theorem} \label{Theorem: Cylinder-Heat_s}
	Let $ D_s > 0 $, $ q > 5 $. Assume that $ \Os \subset \bbr^3 $ is the domain defined in Theorem \ref{Theorem: main} with $ \Sigma, G_s, S $ of class $ C^3 $ and $ \partial \Sigma, \partial G $ of $ C^4 $ as well. Then there exists a unique solution
	\begin{gather*}
		c_s \in \bbe_{c,s}(J)
	\end{gather*}
	of \eqref{Linearized TwoPhase} if and only if the data are subject to the following regularity and compatibility conditions:
	\begin{enumerate}
		\item $ (f_{c,s}, g_8) \in \bbf_{10}(J) \times \bbf_{11}(J) $, 
		\item $ c_{0,s} \in X_{\gamma, c_s} $, 
		\item compatibility condition \eqref{Compatibility: Cylinder-initial-Heat_s} holds.
	\end{enumerate}
\end{theorem}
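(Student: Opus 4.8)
The plan is to treat \eqref{Linearized Heat_s} as a pure Neumann problem for the heat operator $\pt - D_s \Delta$ on the ring $\Os$, so that the argument runs in close parallel to the fluid heat equation of Theorem \ref{Theorem: Cylinder-Heat_f} and uses the same localization philosophy as Theorem \ref{Theorem: Cylinder-TwoPhase}. The only genuinely new structural feature is that $\partial \Os = \Sigma \cup S \cup G_s$ carries \emph{two} ninety-degree contact lines, the moving line $\partial \Sigma$ (where $\Sigma$ meets $G_s$) and the fixed line $\partial S$ (where $S$ meets $G_s$). I would dispose of necessity first, since it is the routine direction: for a solution $c_s \in \bbe_{c,s}(J)$ the membership $f_{c,s} \in \bbf_{10}(J)$ is immediate, the Neumann trace $D_s \nabla c_s \cdot \nu_{\partial \Os}$ lies in $\bbf_{11}(J)$ by the trace theory for anisotropic Sobolev--Slobodeckij spaces, and restricting to $t = 0$ and to the contact lines reproduces exactly the identities collected in \eqref{Compatibility: Cylinder-initial-Heat_s}.

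For sufficiency I would set up a localization procedure based on the partition of unity constructed in Appendix \ref{AppSection: Partition}. After flattening the relevant boundary pieces one reduces to three families of model problems: the full-space heat equation at interior charts; the half-space heat equation with a Neumann condition at charts centred on the smooth parts of $\Sigma$, $S$ or $G_s$; and the quarter-space heat equation carrying Neumann conditions on \emph{both} faces at charts centred on $\partial \Sigma$ or $\partial S$. The first two families enjoy maximal $L^q$-regularity by the standard theory. For the corner charts I would exploit the ninety-degree angle directly: an even reflection across one of the two faces preserves the Neumann condition and converts the quarter space into a half space bearing a Neumann condition on the remaining face, i.e. back into the second family. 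Here the contact-line identities in \eqref{Compatibility: Cylinder-initial-Heat_s}, namely $\ptial{\nu_{G_s}}(\rv{g_7}_{\Sigma}) = \ptial{\nuSigma}(\rv{g_7}_{G_s})$ at $\partial \Sigma$ and $\ptial{\nu_{G_s}}(\rv{g_7}_{S}) = \ptial{\nu_S}(\rv{g_7}_{G_s})$ at $\partial S$, are precisely what guarantees that the reflected Neumann datum stays in the class $\bbf_{11}$ across the fold, so that no spurious singularity is created at the corner and the reflected problem remains maximally regular.

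Having solved each localized model problem, I would assemble a parametrix by summing the local solutions against the subordinate cut-offs. The resulting residual consists of commutator terms in which derivatives fall on the cut-off functions; these are of lower order, and on a sufficiently short time interval $J$ (or after exploiting the smallness of the subordinate cut-offs) they act with small norm on $\bbe_{c,s}(J)$, so that the exact solution is recovered by a Neumann series. Uniqueness then follows by applying the same a priori estimate to the difference of two solutions with vanishing data.

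The hard part will not be the reflection itself, since both corners are of Neumann--Neumann type, which is the most favourable configuration for even extension; rather, it will be the bookkeeping of the localization over the ring. One must verify that the finitely many bent quarter-space charts covering the two disjoint contact lines $\partial \Sigma$ and $\partial S$ can be chosen compatibly with the atlas of Appendix \ref{AppSection: Partition}, and that the commutator estimates close \emph{uniformly} so that the perturbation argument converges. This is the step at which the assumed $C^3$ regularity of $\Sigma$, $S$, $G_s$ and the $C^4$ regularity of the contact lines is actually consumed, since it is needed to flatten the boundary pieces while keeping the coefficients of the transformed operator in the admissible class for maximal regularity.
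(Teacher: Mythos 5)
Your proposal is correct and follows essentially the same route as the paper, which itself only sketches this proof by deferring to a standard localization argument built on the quarter-space Neumann--Neumann model problem of Section \ref{Section: Heat-quarter} (solved there exactly by the even-reflection-plus-uniqueness trick you describe, with the corner compatibility condition used to keep the reflected Neumann datum in the right class) and its bent version in Section \ref{Section: Heat-bent-quarter}. The parametrix assembly, commutator smallness in $T$, and Neumann series closure you outline are the same mechanism the paper carries out in detail for the two-phase Stokes problem in Section \ref{Subsection: Localization procedure}.
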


4). \textbf{ODEs in a cylindrical ring}
\begin{equation} \label{Linearized ODE: growth and foam cell}
	\begin{alignedat}{3}
		\pt c_s^* - \beta_1 c_s & = f_{c^*}, && \tin \Os \times J, \\
		\pt g - \beta_2 c_s & = f_g, && \tin \Os \times J, \\
		c_s^*(0) = 0,\  g(0) & = 1, && \tin \Os,
	\end{alignedat} 
\end{equation}
where $ \beta_1 = \beta $, $ \beta_2 = \gamma \beta/ 3 \rho_s $. Since both equations in \eqref{Linearized ODE: growth and foam cell} are ordinary differential equation, if $ c_s $ is the solution of \eqref{Linearized Heat_s}, then one obtains
\begin{gather*}
	c_s^* \in \W{1}(J; \W{1}(\Os)), \quad
	g \in \W{1}(J; \W{1}(\Os)),
\end{gather*}
provided that
\begin{gather*}
	f_{c^*} \in \Lq{q}(J; \W{1}(\Os)), \quad
	f_g \in \Lq{q}(J; \W{1}(\Os)).
\end{gather*}

\section{Model problems} \label{Section: Model problems}

As in \cite{PS2016} and \cite{Wilke2020}, a localization argument will be employed to prove the existence and uniqueness of a solution to \eqref{Linearized TwoPhase}--\eqref{Linearized ODE: growth and foam cell}. Since the linearized decoupled systems are a two-phase Stokes problem, two heat equations and two ODEs, we impose the localization procedure to each system prospectively. To this end, we study nine different types model problems, which are:
\begin{itemize}
	\item the two-phase Stokes equations with a flat interface and without any boundary condition
	\item the full space heat equations (without any boundary or interface conditions)
	\item the Stokes equations with outflow boundary conditions in a half-space and no interface
	\item the Stokes equations with no-slip boundary conditions in a half-space and no interface
	\item the heat equations with Dirichlet or Neumann boundary conditions in a half-space and no interface
	\item the Stokes equations in quarter-spaces with outflow conditions on one part of the boundary and no-slip boundary conditions on the other part
	\item the two-phase Stokes equations with outflow boundary conditions, a flat interface and a contact angle of 90 degrees in a half-space
	\item the heat equations in quarter-spaces with Neumann boundary conditions.
\end{itemize}
\begin{remark}
	As the first five problems are well understood, see e.g. Pr\"uss--Simonett \cite{PS2016}, we focus on the rest three ones. Note that in \cite{Wilke2020}, Wilke studied model problems for the Stokes equations with pure slip boundary conditions in a quarter-space, as well as for the two-phase Stokes equations with pure slip boundary conditions in a half-space, our consideration expends his results to such model problems with outflow boundary conditions, which is reasonable if we cut off the human vessels. 
\end{remark}


\subsection{The Stokes equations in quarter-spaces} \label{Section: Stokes-quarter}
For convenience, define $ \OM := \bbr \times \bbr_+ \times \bbr_+ $, $ G := \bbr \times \{ 0 \} \times \bbr_+ $ and $ S := \bbr \times \bbr_+ \times \{ 0 \} $. Now let us firstly consider the system
\begin{equation}\label{StokesEqs: quarter}
	\begin{alignedat}{3}
		\rho \pt u - \Div \Smu(u, \pi) & = f, && \tin \OM \times J, \\
		\Div u & = f_d, && \tin \OM \times J, \\
		\tran{(u_1, u_3)} & = 0, && \ton G \times J, \\
		- \pi + 2 \mu \ptial{2} u_2 & = g_2, && \ton G \times J, \\
		u & = g_3, && \ton S \times J, \\
		u(0) & = 0, && \tin \OM \times J,
	\end{alignedat}
\end{equation}
where $ \Smu(u, \pi) = - \pi \bbi + \mu ( \nabla u + \tran{\nabla} u ) $. Then we have the following theorem related to \eqref{StokesEqs: quarter}.
\begin{theorem} \label{Theorem: MP-Stokes-quarterspace}
	Let $ q > 5 $, $ T > 0 $, $ \rho, \mu > 0 $ and $ J = (0,T) $. Assume that $ G, S \in C^3 $ and $ \partial G $ is of class $ C^4 $. Then there exists a unique solution 
	\begin{gather*}
		u \in \WO{1}(J; \Lq{q}(\OM)^3) \cap \Lq{q}(J; \W{2}(\OM)^3), \\
		\pi \in \Lq{q}(J; \dW{1}(\OM)), \quad
		\rv{\pi}_G \in \W{\onehalf - \frac{1}{2q}}(J; \Lq{q}(G)) \cap \Lq{q}(J; \W{1 - \frac{1}{q}}(G))
	\end{gather*}
	of \eqref{StokesEqs: quarter} if and only if the data satisfy the following regularity and compatibility conditions:
	\begin{enumerate}
		\item $ f \in \Lq{q}(J; \Lq{q}(\OM)^3) $,
		\item $ f_d \in \WO{1}(J; \dW{-1}(\OM)) \cap \Lq{q}(J; \W{1}(\OM)) $,
		\item $ g_2 \in \W{\onehalf - \frac{1}{2q}}(J; \Lq{q}(G)) \cap \Lq{q}(J; \W{1 - \frac{1}{q}}(G)) $,
		\item $ g_3 \in \WO{1 - \frac{1}{2q}}(J; \Lq{q}(S)^3) \cap \Lq{q}(J; \W{2 - \frac{1}{q}}(S)^3) $,
		\item $ \tran{((g_3)_1, (g_3)_3)}|_{\Bar{G} \cap \Bar{S}} = 0 $. 
	\end{enumerate}
\end{theorem}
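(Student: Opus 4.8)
My plan is to establish necessity directly and sufficiency by a reduction-and-reflection argument, following the strategy of Wilke \cite{Wilke2020}. Necessity is the routine direction: for a solution $(u,\pi)$ in the stated class, conditions (1)--(4) are read off from $f = \rho\pt u - \Div\Smu(u,\pi)$, $f_d = \Div u$, and the trace theory for the anisotropic solution spaces, while (5) follows from continuity up to the edge $\Bar{G}\cap\Bar{S}$ --- available since $q>5$ forces $u(t)\in C(\Bar{\OM})$ --- because $\tran{(u_1,u_3)}=0$ on $G$ and $u=g_3$ on $S$ must coincide along $\partial G$.

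For sufficiency I would first reduce to the case $f_d=0$ and $g_2=0$. I subtract a solenoidal corrector $\phi$ with $\Div\phi=f_d$ in the maximal-regularity class $\WO{1}(J;\Lq{q}(\OM)^3)\cap\Lq{q}(J;\W{2}(\OM)^3)$; the hypothesis $f_d\in\WO{1}(J;\dW{-1}(\OM))\cap\Lq{q}(J;\W{1}(\OM))$ is exactly what permits this. I then subtract the solution of the half-space ($x_2>0$) Stokes problem carrying the outflow data $-\pi+2\mu\ptial{2} u_2=g_2$, $\tran{(u_1,u_3)}=0$ on $\{x_2=0\}$, which is one of the model problems already available from Pr\"uss--Simonett \cite{PS2016}. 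I would choose both correctors to be solenoidal and to satisfy $\tran{(u_1,u_3)}=0$ on $G$; this is the key point, because then the Dirichlet data they induce on $S$ automatically inherit the edge relations discussed below. After these reductions the problem has $f_d=0$, $g_2=0$, a homogeneous outflow condition on $G$, and an updated no-slip datum $g_3$ on $S$.

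Now I reflect across $G=\{x_2=0\}$, extending $u_1,u_3,\pi$ and $f_1,f_3$ as odd functions of $x_2$ and $u_2,f_2$ as even functions of $x_2$ (the latter requiring nothing beyond $\Lq{q}$). As explained in the introduction, these parities are precisely the ones compatible with the divergence-free constraint and the homogeneous outflow conditions: the odd pressure vanishes on $\{x_2=0\}$, the even normal velocity has $\ptial{2} u_2=0$ there, and $\ptial{2} u_2$ is odd, so that $\Div u=\ptial{1} u_1+\ptial{2} u_2+\ptial{3} u_3$ stays odd and continues to vanish across the plane. The reflected problem lives on the half-space $\bbr\times\bbr\times\bbr_+$ and is exactly the Stokes system with the no-slip condition $u=g_3$ on $\bbr\times\bbr\times\{0\}$, $g_3$ now denoting the reflected datum. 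This half-space no-slip problem is again a known model problem \cite{PS2016}, yielding a unique maximal-regularity solution; restricting it to $\OM$ and adding back the two correctors produces a solution of \eqref{StokesEqs: quarter}. Uniqueness follows because the odd/even reflection of any solution solves the (uniquely solvable) half-space no-slip problem.

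The main obstacle is to check that the reflected Dirichlet datum lies in $\WO{1-\frac{1}{2q}}(J;\Lq{q}(\bbr^2)^3)\cap\Lq{q}(J;\W{2-\frac{1}{q}}(\bbr^2)^3)$. The odd extension of the tangential components $(g_3)_1,(g_3)_3$ belongs to this class precisely when they vanish along the edge $\partial G$, which is condition (5); no higher-order edge condition is triggered since $2-\frac{1}{q}<2+\frac{1}{q}$. The even extension of the normal component $(g_3)_2$ requires instead $\ptial{2} (g_3)_2=0$ along $\partial G$; this is not an extra hypothesis but is forced, for the reduced problem, by $\Div u=0$ together with $u_1=u_3=0$ on all of $G$, which gives $\ptial{2} u_2=0$ on $G$ and hence along the edge --- and it is exactly here that the solenoidal, tangentially-vanishing choice of the correctors is used, so that the data reintroduced on $S$ keep this property. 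Threading these interlocking edge identities through the reduction steps, with $q>5$ guaranteeing that the codimension-two edge traces and the embedding $\W{2-\frac{2}{q}}(\OM)\hookrightarrow C^1(\Bar{\OM})$ are available, is where essentially all of the technical work lies; once it is done, the cited half-space results close the argument.
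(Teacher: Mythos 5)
Your proposal follows essentially the same route as the paper's proof: reduce $f$, $f_d$, $g_2$ by means of the known half-space Stokes problem with outflow conditions on $\{x_2=0\}$, then extend the remaining Dirichlet datum on $S$ by odd reflection of the tangential components and even reflection of the normal one --- justified by exactly the hidden edge identity $\ptial{2}(g_3)_2=0$ that the paper derives from $\Div u=0$ together with $u_1=u_3=0$ on $G$ --- solve the resulting half-space Dirichlet problem, and recover the outflow conditions on $G$ through the same symmetry-plus-uniqueness argument. The only cosmetic differences are that the paper performs the first reduction in one stroke, extending $f,f_d,g_2$ in the $x_3$-direction and solving a single auxiliary half-space outflow problem rather than using a separate divergence corrector (which, as a side remark, cannot literally be \emph{solenoidal} when $\Div\phi=f_d\neq 0$), and that it eliminates $f$ before reflecting instead of carrying it through the reflection with odd/even parities.
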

\begin{proof}
To deal with the well-posedness of the Stokes equations in quarter-spaces, we extend the data suitably to the half-space and solve the Stokes equations in half space. 
For this purpose, we firstly extend the function
\begin{equation*}
	f_d \in \WO{1}(J; \dW{-1}(\OM)^3) \cap \Lq{q}(J; \W{1}(\OM)^3)
\end{equation*}
with respect to $ x_3 $ by
\begin{equation} \label{Extension:Sobolev}
	\tf_d (t, x_1, x_2, x_3) = \left\{
		\begin{aligned}
			& f_d (t, x_1, x_2, x_3), & \text{ if } x_3 > 0, \\
			& - f_d (t, x_1, x_2, -2 x_3) + 2 f_d (t, x_1, x_2, - x_3 / 2), & \text{ if } x_3 < 0.
		\end{aligned}
	\right.
\end{equation}
to
\begin{equation*}
	\tf_d \in \WO{1}(J; \dW{-1}(\bbr \times \bbr^+ \times \bbr)^3) \cap \Lq{q}(J; \W{1}(\bbr \times \bbr^+ \times \bbr)^3).
\end{equation*}
By the same extension, we have the extended functions
\begin{gather*}
	\tf \in \Lq{q}(J; \Lq{q}(\bbr \times \bbr^+ \times \bbr)^3), \\
	\tg_2 \in \W{\onehalf - \frac{1}{2q}}( J; \Lq{q}(\bbr \times \{0\} \times \bbr) ) \cap \Lq{q}( J; \W{1 - \frac{1}{q}}(\bbr \times \{0\} \times \bbr) ). 
\end{gather*}
Now one solves the auxiliary half-space Stokes problem
\begin{equation}\label{StokesEqs: half-space_g}
	\begin{alignedat}{3}
		\rho \pt u - \Div \Smu(u, \pi)\mu & = \tf, && \tin \bbr \times \bbr_+ \times \bbr \times J, \\
		\Div u & = \tf_d, && \tin \bbr \times \bbr_+ \times \bbr \times J, \\
		\tran{(u_1, u_3)} & = 0, && \ton \bbr \times \{0\} \times \bbr \times J, \\
		- \pi + 2 \mu \ptial{2} u_2 & = \tg_2, && \ton \bbr \times\{0\} \times \bbr \times J, \\
		u(0) & = 0, && \tin \bbr \times \bbr_+ \times \bbr,
	\end{alignedat}
\end{equation}
and obtains a unique solution (see e.g. \cite[Theorem 6.1]{BP2007} or \cite[Theorem 7.2.1]{PS2016})
\begin{gather*}
	\tu \in \WO{1}(J; \Lq{q}(\bbr \times \bbr_+ \times \bbr)^3) \cap \Lq{q}(J; \W{2}(\bbr \times \bbr_+ \times \bbr)^3), \\
	\tpi \in \Lq{q}(J; \dW{1}(\bbr \times \bbr_+ \times \bbr)), \quad
	\rv{\tpi}_{x_2 = 0} \in \W{\onehalf - \frac{1}{2q}}(J; \Lq{q}(\bbr^2)) \cap \Lq{q}(J; \W{1 - \frac{1}{q}}(\bbr^2)).
\end{gather*}
If $ (u, \pi) $ is a solution of \eqref{StokesEqs: quarter}, then the function $ (u - \tu, \pi - \tpi) $ solves 
\begin{equation}\label{StokesEqs: quarter-reduced}
	\begin{alignedat}{3}
		\rho \pt u - \Delta u + \nabla \pi & = 0, && \tin \OM \times J, \\
		\Div u & = 0, && \tin \OM \times J, \\
		\tran{(u_1, u_3)} & = 0, && \ton G \times J, \\
		- \pi + 2 \mu \ptial{2} u_2 & = 0, && \ton G \times J, \\
		u & = g_3, && \ton S \times J, \\
		u(0) & = 0, && \tin \OM,
	\end{alignedat}
\end{equation}
with a modified data $ g_3 $ (not to be relabeled) in the right regularity classes having a vanishing trace at $ t = 0 $ and satisfying $ \tran{((g_3)_1, (g_3)_3)}|_{\Bar{G} \cap \Bar{S}} = 0 $. Note that the odd reflection does work for functions in both $ \WOO{1}(\bbr_+^n) $ and $ \W{2}(\bbr_+^n) \cap \WOO{1}(\bbr_+^n) $, the real interpolation implies that the function in $ \W{s}(\bbr_+^n) \cap \WOO{1}(\bbr_+^n) $ can be extended to $ \W{s}(\bbr^n) $, $ 1 < s < 2 $, as well. 
It follows from $ \Div u = 0 $ that $ \ptial{2} (g_3)_2 = \ptial{2} u_2 = - \ptial{1} u_1 - \ptial{3} u_3 = 0 $ on $ \Bar{G} \cap \Bar{S} $. 
Thus, we extend $ ((g_3)_1, (g_3)_3) $ via odd reflection and $ (g_3)_2 $ via even reflection along $ x_2 $ to 
\begin{equation*}
	\tg_3 \in \WO{1 - \frac{1}{2q}}(J; \Lq{q}(\bbr^2 \times \{0\})^3) \cap \Lq{q}(J; \W{2 - \frac{1}{q}}(\bbr^2 \times \{0\})^3).
\end{equation*}
By Bothe--Pr\"uss \cite[Theorem 6.1]{BP2007} or Pr\"uss--Simonett \cite[Theorem 7.2.1]{PS2016}, the half-space Stokes problem
\begin{equation}\label{StokesEqs: half-space_f}
	\begin{alignedat}{3}
		\rho \pt \baru - \Delta \baru + \nabla \barpi & = 0, && \tin \bbr \times \bbr \times \bbr_+ \times J, \\
		\Div \baru & = 0, && \tin \bbr \times \bbr \times \bbr_+ \times J, \\
		\baru & = \tg_3, && \ton \bbr \times \bbr \times \{0\} \times J, \\
		\baru(0) & = 0, && \tin \bbr \times \bbr \times \bbr_+,
	\end{alignedat}
\end{equation}
admits a unique solution satisfying
\begin{gather*}
	\baru \in \WO{1}(J; \Lq{q}(\bbr \times \bbr \times \bbr_+)^3) \cap \Lq{q}(J; \W{2}(\bbr \times \bbr \times \bbr_+)^3), \\
	\barpi \in \Lq{q}(J; \dW{1}(\bbr \times \bbr \times \bbr_+)).
\end{gather*}
With symmetry, we conclude that 
\begin{gather*}
	(\cu, \cpi)(t, x_1, x_2, x_3) := (\tran{(- \hatu_1, \hatu_2, - \hatu_3)}, - \hpi)(t, x_1, - x_2, x_3)
\end{gather*}
is a solution pair to \eqref{StokesEqs: half-space_f} as well. It follows from the uniqueness that 
\begin{equation*}
	\cu(t, x_1, x_2, x_3) = \hatu(t, x_1, x_2, x_3), \quad
	\cpi(t, x_1, x_2, x_3) := \hpi(t, x_1, x_2, x_3)
\end{equation*}
and this yields
\begin{equation*}
	(\hatu_1, \hatu_3)(t, x_1, 0, x_3) = 0, \quad
	\ptial{2} \hatu_2(t, x_1, 0, x_3) = 0, \quad
	\hpi(t, x_1, 0, x_3) = 0.
\end{equation*}
Therefore, the restricted pair $ (u, \pi) := (\tu + \baru, \tpi + \barpi) $ is the desired solution to \eqref{StokesEqs: quarter}. 

Now we give a brief proof of the uniqueness. To this end, for a solution $ u $ to \eqref{StokesEqs: quarter} with data all reduced in a quarter-space, one reflects it by suitable extension operators with respect to $ x_2 $ to a function $ \tu $ with $ \rv{\tu}_{x_2 > 0} = u $ in a half-space. Note here reflecting along $ x_3 $ is also an option without a difference. Then by a standard energy estimate one concludes the uniqueness of $ \tu $, which implies that of $ u $. This completes the proof.
\end{proof}

\subsection{The Stokes equations in bent quarter-spaces} \label{Section: Stokes-Bent-Quarter}
Now we consider the case of bent quarter spaces. Let $ \theta \in BC^2(\bbr^2) $ and $ x = (x', x_3) $, $ x' = (x_1, x_2) $ such that 
\begin{equation*}
	\OM_\theta := \left\{  x \in \bbr^3: x_2 > 0, x_3 > \theta(x') \right\}.
\end{equation*}
Let $ \nabla_{x'} = \tran{(\ptial{1}, \ptial{2})} $. Assume that $ \norm{\nabla_{x'} \theta}_\infty \leq \eta $ and $ \norm{\nabla_{x'}^2 \theta}_\infty \leq M $, $ M > 0 $, where $ \eta > 0 $ may be chosen as small as we need. Define 
\begin{equation*}
	\Stheta := \left\{ x \in \bbr^3: x_3 = \theta(x'), x_2 > 0 \right\}, \quad 
	\Gtheta := \left\{ x \in \bbr^3: x_3 > \theta(x'), x_2 = 0 \right\}.
\end{equation*}
Moreover, denote the unit outer normal vector on $ \Stheta $ by
\begin{equation*}
	\nuStheta := \frac{1}{\sqrt{1 + \vert \nabla_{x'} \theta (x') \vert^2}} \tran{(\nabla_{x'} \theta(x'), -1)}.
\end{equation*}
Now we consider the problem
\begin{equation}\label{StokesEqs: Bent-quarter}
	\begin{alignedat}{3}
		\rho \pt u - \Div \Smu(u, \pi) & = f, && \tin \OM_\theta \times J, \\
		\Div u & = f_d, && \tin \OM_\theta \times J, \\
		\tran{(u_1, u_3)} & = g_1, && \ton \Gtheta \times J, \\
		- \pi + 2 \mu \ptial{2} u_2 & = g_2, && \ton \Gtheta \times J, \\
		u & = g_3, && \ton \Stheta \times J, \\
		u(0) & = u_0, && \tin \OM_\theta,
	\end{alignedat}
\end{equation}
where $ \Smu(u, \pi) = - \pi \bbi + \mu ( \nabla u + \tran{\nabla} u ) $ and $ \rho, \mu > 0 $ are given constants. For given data $ (f, f_d, g_1, g_2, g_3, u_0) $, the following compatibility conditions hold:
\begin{gather}
	\Div u_0 = \rv{f_d}_{t = 0}, \quad
	\rv{u_0}_{\Stheta} = \rv{g_3}_{t = 0}, \quad
	\tran{((u_0)_1, (u_0)_3)} = \rv{g_1}_{t = 0}, \label{initialComp: Bent Stokes} \\
	g_1 = \tran{((g_3)_1, (g_3)_3)}, \text{ at the contact line } \Bar{\Gtheta} \cap \Bar{\Stheta}. \label{ContactComp: Bent Stokes} 
\end{gather}

\subsubsection{Reduction}
\label{subsubsection:reduction-bent-quarter}
To solve \eqref{StokesEqs: Bent-quarter}, we shall reduce it to the case $ (u_0, f, g_1, g_2) = 0 $. For this purpose, it is possible to extend $ u_0 $ and $ f $ to some $ \tu_0 \in \W{2 - 2/q}(\bbr^3)^3 $ and $ \tf \in \Lq{q}(J; \Lq{q}(\bbr^3)^3) $ respectively (for example, extend them to $ \bbr \times \bbr_+ \times \bbr $ by the extension \eqref{Extension:x_3theta} below and then extend them to $ \bbr^3 $ by a standard extension operator). Then solving the full-space heat equation
\begin{equation*}
	\begin{alignedat}{3}
		\rho \pt u - \mu \Delta u & = f, && \tin \bbr^3 \times J, \\
		u(0) & = \tu_0, && \tin \bbr^3,
	\end{alignedat}
\end{equation*}
yields a solution
\begin{equation*}
	\tu \in \W{1}(J; \Lq{q}(\bbr^3)^3) \cap \Lq{q}(J; \W{1}(\bbr^3)^3).
\end{equation*}
For $ \tg_1 := g_1 - \tran{((\tu)_1, (\tu)_3)}|_{\Gtheta} $, $ \tg_2 := g_2 - \rv{2 \mu \ptial{2} \tu_2}_{\Gtheta} $ and $ \tf_d := f_d - \Div \tu $, it holds that $ \rv{\tg_1}_{t = 0} = 0 $ and $ \tf_d |_{t = 0} = 0 $ from \eqref{initialComp: Bent Stokes}. Analogously, one can suitably extend them to some function
\begin{gather*}
	\hg_1 \in \WO{1 - \frac{1}{2q}}(J; \Lq{q}(\bbr \times \{0\} \times \bbr)^2) \cap \Lq{q}(J; \W{2 - \frac{1}{q}}(\bbr \times \{0\} \times \bbr)^2), \\
	\hg_2 \in \W{\onehalf - \frac{1}{2q}}(J; \Lq{q}(\bbr \times \{0\} \times \bbr)^2) \cap \Lq{q}(J; \W{1 - \frac{1}{q}}(\bbr \times \{0\} \times \bbr)^2), \\
	\hatf_d \in \WO{1}(J; \dW{-1}(\bbr \times \bbr_+ \times \bbr)) \cap \Lq{q}(J; \W{1}(\bbr \times \bbr_+ \times \bbr)).
\end{gather*} 
Then we solve the half-space Stokes equation with outflow boundary condition
	\begin{alignat*}{3}
		\rho \pt u - \Smu (u, \pi) & = 0, && \tin \bbr \times \bbr_+ \times \bbr \times J, \\
		\Div u & = \hatf_d, && \tin \bbr \times \bbr_+ \times \bbr \times J, \\
		\tran{(u_1, u_3)} & = \hg_1, && \ton \bbr \times \{0\} \times \bbr \times J, \\
		- \pi + 2 \mu \ptial{2} u_2 & = \hg_2, && \ton \bbr \times \{0\} \times \bbr \times J, \\
		u(0) & = 0, && \tin \bbr \times \bbr_+ \times \bbr,
	\end{alignat*}
and obtain a unique solution $ (\hatu, \hpi) $ satisfying
\begin{gather*}
	\hatu \in \WO{1}(J; \Lq{q}(\bbr \times \bbr_+ \times \bbr)^3) \cap \Lq{q}(J; \W{1}(\bbr \times \bbr_+ \times \bbr)^3),\\
	\hpi \in \Lq{q}(J; \dW{1}(\bbr \times \bbr_+ \times \bbr)^3), \quad
	\rv{\hpi}_{x_2 = 0} \in \W{\onehalf - \frac{1}{2q}}(J; \Lq{q}(\bbr^2)) \cap \Lq{q}(J; \W{1 - \frac{1}{q}}(\bbr^2)),
\end{gather*}
by Bothe--Pr\"uss \cite[Theorem 6.1]{BP2007} or Pr\"uss--Simonett \cite[Theorem 7.2.1]{PS2016}. If $ (u, \pi) $ is a solution of \eqref{StokesEqs: Bent-quarter}, then $ (\cu, \cpi) := (u - \tu - \hatu, \pi - \hpi) $ solves \eqref{StokesEqs: Bent-quarter} with reduced data $ (u_0, f, f_d, g_1, g_2) = 0 $ and a modified function $ g_3 $ (not to be relabeled) in the right regularity classes satisfying the compatibility conditions $ \rv{g_3}_{t = 0} = 0 $ and $ \tran{((g_3)_1, (g_3)_3)} = 0 $ at the contact line $ \Bar{\Gtheta} \cap \Bar{\Stheta} $ by \eqref{ContactComp: Bent Stokes}.

\subsubsection{Transform to a quarter space} \label{subsubsection:transform-quarter}
In this part, we transform the boundaries $ \Stheta $ and $ \Gtheta $ to $ S := \bbr \times \bbr_+ \times \{0\} $ and $ G := \bbr \times \{0\} \times \bbr_+ $, respectively, and hence, $ \OM_\theta $ to $ \OM := \bbr \times \bbr_+^2 $. To this end, we introduce the new variables $ \barx := (\barx_1, \barx_2, \barx_3) $ where $ \barx_1 = x_1 $, $ \barx_2 = x_2 $ and $ \barx_3 = x_3 - \theta(x') $ for $ x \in \OM_\theta $. For $ (u, \pi) $ a solution to \eqref{StokesEqs: Bent-quarter}, set
\begin{equation*}
	\baru(t, \barx) := u(t, \barx_1, \barx_2, \barx_3 + \theta(\barx')), \quad
	\barpi(t, \barx) := \pi(t, \barx_1, \barx_2, \barx_3 + \theta(\barx')).
\end{equation*}
for $ t \in J, \barx \in \OM $. In the same way we transform the data $ (f_d, g_2, g_3) $ to $ (\bar{f}_d, \bar{g}_2, \bar{g}_3) $. Note that
\begin{equation} \label{transform-relation-low}
	\begin{gathered}
		\nabla \theta  = \tran{(\nabla_{x'} \theta, 0)}, \quad
		\nabla u  = \nabla \baru - \ptial{\barx_3} \baru \otimes \nabla \theta, \\
		\Div u  = \Div \baru - \nabla \theta \cdot \ptial{\barx_3} \baru, \quad
		\Div T = \Div \Bar{T} - \ptial{\barx_3} \Bar{T} \nabla \theta,
	\end{gathered}
\end{equation}
where $ u $, $ \baru $ are vector-valued and $ T $, $ \Bar{T} $ are tensor-valued functions. Then
\begin{equation} \label{transform-relation-high}
	\begin{aligned}
		\Delta u & = \Div \nabla u = \Div(\nabla \baru - \ptial{\barx_3} \baru \otimes \nabla \theta) \\
		& = \Delta \baru - \ptial{\barx_3} \nabla \baru \nabla \theta - \Div(\ptial{\barx_3} \baru \otimes \nabla \theta)
		+ \ptial{\barx_3} (\ptial{\barx_3} \baru \otimes \nabla \theta) \nabla \theta \\
		& = \Delta \baru - \ptial{\barx_3} \nabla \baru \nabla \theta
		- (\nabla \ptial{\barx_3} \baru) \nabla \theta + \Delta \theta \ptial{\barx_3} \baru
		+ (\ptial{\barx_3}^2 \baru \otimes \nabla \theta) \nabla \theta.
	\end{aligned}
\end{equation}
In the following, we will write $ \ptial{\barx_3} $ as $ \ptial{3} $ for the sake of readability. Consequently, for $ (\baru, \barpi) $, we have the problem
\begin{equation}\label{StokesEqs: Bent-quarter_transformed}
	\begin{alignedat}{3}
		\rho \pt \baru - \mu \Delta \baru + \nabla \barpi & = M_1(\theta, \baru, \barpi), && \tin \OM \times J, \\
		\Div \baru & = M_2(\theta, \baru), && \tin \OM \times J, \\
		\tran{(\baru_1, \baru_3)} & = 0, && \ton G \times J, \\
		- \barpi + 2 \mu \ptial{2} \baru_2 & = M_3(\theta, \baru), && \ton G \times J, \\
		\baru & = \bar{g}_3, && \ton S \times J, \\
		\baru(0) & = 0, && \tin \OM,
	\end{alignedat}
\end{equation}
where $ M_1 $, $ M_2 $ and $ M_3 $ are given by
\begin{align*}
	M_1(\theta, \baru, \barpi) & := \mu \big( - 2 \ptial{3} \nabla \baru \nabla \theta
		+ \Delta \theta \ptial{3} \baru
		+ (\ptial{3}^2 \baru \otimes \nabla \theta) \nabla \theta \big)
		+ \nabla \theta(\barx') \ptial{3} \barpi, \\
	M_2(\theta, \baru) & := \nabla \theta(\barx') \cdot \ptial{3} \baru, \quad
	M_3(\theta, \baru) := 2 \mu \ptial{2} \theta(\barx') \ptial{3} \baru_2.
\end{align*}

\subsubsection{Existence and uniqueness} \label{Bent Stokes: Existence and uniqueness}
Notice that \eqref{StokesEqs: Bent-quarter_transformed} can be seen as a perturbation of \eqref{StokesEqs: quarter} if data are in the right regularity classes and $ \norm{\nabla \theta}_\infty \leq \eta $, where $ \eta > 0 $ is small enough. Since Theorem \ref{Theorem: MP-Stokes-quarterspace} holds, we are going to apply the Neumann series argument to \eqref{StokesEqs: Bent-quarter_transformed}. To this end, we employ the similar regularity classes for $ (\baru, \barpi) $ and given data as in Wilke \cite[Section 1.3.2]{Wilke2020}, namely, define
\begin{equation*}
	\bbeo(J) := \bbeo_u(J) \times \bbe_\pi(J),
\end{equation*}
where
\begin{align*}
	& \bbeo_u(J) := \left\{ u \in \WO{1}(J; \Lq{q}(\OM)^3) \cap \Lq{q}(J; \W{2}(\OM)^3): \rv{\tran{(u_1, u_3)}}_{G} = 0 \right\}, \\
	& \bbe_\pi(J) := \left\{ \pi \in \Lq{q}(J; \dW{1}(\OM)): \rv{\pi}_{G} \in \W{\onehalf - \frac{1}{2q}}(J; \Lq{q}(G)) \cap \Lq{q}(J; \W{1 - \frac{1}{q}}(G)) \right\}, 
\end{align*}
and 
\begin{equation*}
	\widetilde{\bbf}(J) := \bbf_1(J) \times \bbfo_2(J) \times \bbfo_3(J) \times \bbf_4(J),
\end{equation*}
where
\begin{gather*}
	\bbf_1(J) := \Lq{q}(J; \Lq{q}(\OM)^3), \quad
	\bbfo_2(J) := \WO{1}(J; \dW{-1}(\OM)) \cap \Lq{q}(J; \W{1}(\OM)), \\
	\bbfo_3(J) := \WO{\onehalf - \frac{1}{2q}}(J; \Lq{q}(G)) \cap \Lq{q}(J; \W{1 - \frac{1}{q}}(G)), \\
	\bbf_4(J) := \W{1 - \frac{1}{2q}}(J; \Lq{q}(S)^3) \cap \Lq{q}(J; \W{2 - \frac{1}{q}}(S)^3).
\end{gather*}
In addition, let
\begin{align*}
	\bbfo(J) := \left\{ (f, f_d, g_2, g_3) \in \widetilde{\bbf}(J): \tran{((g_3)_1, (g_3)_3)} = 0 \text{ at the contact line } \Bar{G} \cap \Bar{S} \right\}.
\end{align*}
Now define an operator $ L : \bbeo(J) \rightarrow \bbfo(J) $ by
\begin{equation*}
	L(\baru, \barpi) := \left[ 
		\begin{gathered}
			\rho \pt \baru - \mu \Delta \baru + \nabla \barpi \\
			\Div \baru \\
			\rv{- \barpi + 2 \mu \ptial{2} \baru_2}_G \\
			\rv{\baru}_S
		\end{gathered}
	\right],
\end{equation*}
and hence, $ L $ is an isomorphism by Theorem \ref{Theorem: MP-Stokes-quarterspace}. For the right-hand side of \eqref{StokesEqs: Bent-quarter_transformed}, we set
\begin{align*}
	M(\theta, \baru, \barpi)
	:= \tran{(M_1(\theta, \baru, \barpi), M_2(\theta, \baru), M_3(\theta, \baru), 0)}
\end{align*}
and 
\begin{align*}
	F := \tran{(0, 0, 0, f_3)}, \quad f_3 := \bar{g}_3.
\end{align*}
It is clear that $ F \in \widetilde{\bbf}(J) $, since $ \theta \in C^3(\bbr^2) $. Noticing that $ \tran{((\bar{g}_3)_1, (\bar{g}_3)_3)} = 0 $, one obtains $ F \in \bbfo(J) $.

To proceed with the Neumann series argument, we focus on the perturbation term $ M(\theta, \baru, \barpi) $. It can be verified easily that $ M(\theta, \baru, \barpi) \in \bbfo(J) $ for each $ (\baru, \barpi) \in \bbeo(J) $, combining the smoothness of $ \theta $. The only point that needs to be taken care of is $ M_2(\theta, \baru) \in \WO{1}(J; \dW{-1}(\OM)) $. By integration by parts with respect to the $ \barx_3 $, we have
\begin{equation} \label{M2-W^-1}
	\int_\OM M_2(\theta, \baru) \phi \d \barx
	= - \int_\OM \nabla \theta \cdot \baru \ptial{3} \phi \d \barx, \quad 
	\text{for all } \phi \in W_{q',0}^1(\OM),
\end{equation}
which yields the claim with $ \baru \in \bbeo_u(J) $.

Then we rewrite \eqref{StokesEqs: Bent-quarter_transformed} as
\begin{align*}
	(\baru, \barpi) = L^{-1} M(\theta, \baru, \barpi) + L^{-1} F.
\end{align*}
Our aim in the following is to show that for each $ \varepsilon > 0 $, there exist $ T_0 > 0 $ and $ \eta_0 > 0 $ such that
\begin{equation} \label{M-epsilon}
	\norm{M(\theta, \baru, \barpi)}_{\bbfo(J)} \leq \varepsilon \norm{(\baru, \barpi)}_{\bbeo(J)},
\end{equation}
for $ 0 < T < T_0 $ and $ 0 < \eta < \eta_0 $.

Note that
\begin{equation*}
	\WO{1}(J; \Lq{q}(\OM)) \cap \Lq{q}(J; \W{2}(\OM))
	\hookrightarrow \WO{\onehalf}(J; \W{1}(\OM))
	\hookrightarrow \Lq{2q}(J; \W{1}(\OM)),
\end{equation*}
valid for every $ q > 1 $ and the embedding constant does not depend on $ T > 0 $, since $ \rv{\baru}_{t = 0} = 0 $. Then direct calculation yields
\begin{equation}\label{M_1-epsilon}
	\begin{aligned}
		& \norm{M_1(\theta, \baru, \barpi)}_{\bbf_1(J)} \\
		& \ \leq C \norm{\nabla \theta}_\infty \norm{(\baru, \barpi)}_{\bbeo(J)} + \norm{\nabla \theta}_\infty^2 \norm{\baru}_{\bbeo_u(J)} + \norm{\nabla^2 \theta}_\infty \norm{\nabla \baru}_{\Lq{q}(J; \Lq{q}(\OM))} \\
		& \ \leq C \norm{\nabla \theta}_\infty \norm{(\baru, \barpi)}_{\bbeo(J)} + \norm{\nabla \theta}_\infty^2 \norm{\baru}_{\bbeo_u(J)} + T^{\frac{1}{2q}} \norm{\nabla^2 \theta}_\infty \norm{\nabla \baru}_{\Lq{2q}(J; \Lq{q}(\OM))} \\
		& \ \leq C ( \norm{\nabla \theta}_\infty (1 + \norm{\nabla \theta}_\infty) + T^{\frac{1}{2q}} \norm{\nabla^2 \theta}_\infty ) \norm{(\baru, \barpi)}_{\bbeo(J)},
	\end{aligned}
\end{equation}
where $ C > 0 $ does not depend on $ T > 0 $.
For $ M_2(\theta, \baru) = \nabla \theta(\barx') \cdot \ptial{3} \baru $, we have
\begin{align*}
	\norm{M_2(\theta, \baru)}_{\bbfo_2(J)}
	= \norm{M_2(\theta, \baru)}_{\Lq{q}(J; \W{1}(\OM))} 
	+ \norm{M_2(\theta, \baru)}_{\WO{1}(J; \dW{-1}(\OM))}.
\end{align*}
The first term in the right-hand side above can be obtained by
\begin{align*}
	& \norm{M_2(\theta, \baru, \barpi)}_{\Lq{q}(J; \W{1}(\OM))} \\
	& \quad \leq C \norm{\nabla \theta}_\infty \norm{\baru}_{\bbeo_u(J)} + \norm{\nabla^2 \theta}_\infty \norm{\baru}_{\Lq{q}(J; \W{1}(\OM))} \\
	& \quad \leq C \norm{\nabla \theta}_\infty \norm{\baru}_{\bbeo_u(J)} + T^{\frac{1}{2q}} \norm{\nabla^2 \theta}_\infty \norm{\baru}_{\Lq{2q}(J; \W{1}(\OM))} \\
	& \quad \leq C ( \norm{\nabla \theta}_\infty + T^{\frac{1}{2q}} \norm{\nabla^2 \theta}_\infty ) \norm{\baru}_{\bbeo_u(J)}
\end{align*}
similarly to $ M_1 $, while the second term follows from \eqref{M2-W^-1} that
\begin{align*}
	\norm{M_2(\theta, \baru, \barpi)}_{\WO{1}(J; \dW{-1}(\OM))}
	\leq C \norm{\nabla \theta}_\infty \norm{\baru}_{\bbeo_u(J)} .
\end{align*}
Then
\begin{equation} \label{M_2-epsilon}
	\norm{M_2(\theta, \baru)}_{\bbfo_2(J)}
	\leq C ( \norm{\nabla \theta}_\infty + T^{\frac{1}{2q}} \norm{\nabla^2 \theta}_\infty ) \norm{\baru}_{\bbeo_u(J)}.
\end{equation}
For $ M_3(\theta, \baru) = 2 \mu \ptial{2} \theta(\barx') \ptial{3} \baru_2 $, one may need to verify both $ \WO{1/2 - 1/2q}(J; \Lq{q}(G)) $ and $ \Lq{q}(J; \W{1 - 1/q}(G)) $. To this end, we proceed by the definition of Sobolev--Slobodeckij space that
\begin{equation*}
	\norm{M_3(\theta, \baru)}_{\WO{\onehalf - \frac{1}{2q}}(J; \Lq{q}(G))} 
	\leq C \norm{\nabla \theta}_\infty \norm{\nabla \baru}_{\WO{\onehalf - \frac{1}{2q}}(J; \Lq{q}(G))}
	\leq C \norm{\nabla \theta}_\infty \norm{\baru}_{\bbeo_u(J)}.
\end{equation*}
With the help of Sobolev trace theory, one derives that
\begin{align*}
	\norm{M_3(\theta, \baru)}_{\Lq{q}(J; \W{1 - \frac{1}{q}}(G))} 
	\leq C \norm{\nabla \theta}_\infty \norm{\nabla \baru}_{\Lq{q}(J; \W{1}(\OM))}
	\leq C \norm{\nabla \theta}_\infty \norm{\baru}_{\bbeo_u(J)}.
\end{align*}
Then we arrive at
\begin{equation}
	\label{M_3-epsilon}
	\norm{M_3(\theta, \baru)}_{\bbfo_3(J)}
	\leq C \norm{\nabla \theta}_\infty \norm{\baru}_{\bbeo_u}.
\end{equation}
Consequently, collecting \eqref{M_1-epsilon} and $ \eqref{M_2-epsilon} $ together yields
\begin{equation*}
	\norm{M(\theta, \baru, \barpi)}_{\bbfo(J)}
	\leq C ( \norm{\nabla \theta}_\infty (1 + \norm{\nabla \theta}_\infty) + T^{\frac{1}{2q}} \norm{\nabla^2 \theta}_\infty ) \norm{(\baru, \barpi)}_{\bbeo(J)}.
\end{equation*}
Since $ \norm{\nabla \theta}_\infty \leq \eta $, $ \norm{\nabla^2 \theta}_\infty \leq M $, by choosing $ \eta > 0 $ and $ T > 0 $ sufficiently small, one obtains \eqref{M-epsilon}. A Neumann series argument in $ \bbeo(J) $ finally implies that there exists a unique solution $ (\baru, \barpi) \in \bbeo(J) $ of $ L(\baru, \barpi) = M(\theta, \baru, \barpi) + F $ or equivalently a solution $ (u, \pi) $ of \eqref{StokesEqs: Bent-quarter}, provided that the data satisfy all relevant compatibility conditions at the contact line $ \Bar{G} \cap \Bar{S} $.

Conversely, \eqref{StokesEqs: Bent-quarter} admits a solution operator $ S_{QS} : \bbf_{QS} \rightarrow \bbe_{QS} $, where $ \bbf_{QS} $ and $ \bbe_{QS} $ are the solution space and data space, respectively, for the bent quarter-space and the data in $ \bbf_{QS} $ satisfy the compatibility conditions at the contact line $ \{ (x_1, 0, \theta(x_1)): x_1 \in \bbr \} $.

\subsection{The two-phase Stokes equations in half-spaces} \label{Section: Twophase-halfspace}
Let $ \OM := \bbr \times \bbr_+ \times \bbr $, $ G := \bbr \times \{ 0 \} \times \bbr $, $ \Sigma := \bbr \times \bbr_+ \times \{ 0 \} $ and $ \partial \Sigma := \bbr \times \{ 0 \} \times \{ 0 \} $. 
Consider the problem 
\begin{equation}\label{TwoPhaseStokesEqs: half-space}
	\begin{alignedat}{3}
		\rho \pt u - \Div \Smu(u, \pi) & = f, && \tin \Omega \backslash \Sigma \times J, \\
		\Div u & = f_d, && \tin \Omega \backslash \Sigma \times J, \\
		\jump{u} & = g_1, && \ton \Sigma \times J, \\
		\jump{- \pi e_3 + \mu \left( \ptial{3} u + \nabla u_3 \right)} & = g_2, && \ton \Sigma \times J, \\
		\tran{(u_1, u_3)} & = 0, && \ton G \backslash \partial \Sigma \times J, \\
		- \pi + 2 \mu \ptial{2} u_2 & = g_4, && \ton G \backslash \partial \Sigma \times J, \\
		u(0) & = 0, && \tin \Omega \backslash \Sigma.
	\end{alignedat}
\end{equation}
Then we have the following well-posedness result.
\begin{theorem} \label{Theorem: MP-Twophase-halfspace}
	Let $ q > 5 $, $ T > 0 $, $ \rho_j, \mu_j > 0 $, $ j = 1,2 $, and $ J = (0,T) $. Assume that $ \Sigma, G \in C^3 $ and $ \partial \Sigma $ is of class $ C^4 $. Then \eqref{TwoPhaseStokesEqs: half-space} admits a unique solution $ (u, \pi) $ with regularity
	\begin{gather*}
		u \in \WO{1}(J; \Lq{q}(\OM)^3) \cap \Lq{q}(J; \W{2}(\OM \backslash \Sigma)^3), \\
		\pi \in \Lq{q}(J; \dW{1}(\OM \backslash \Sigma)), \quad 
		\jump{\pi} \in \W{\onehalf - \frac{1}{2q}}(J; \Lq{q}(\Sigma)) \cap \Lq{q}(J; \W{1 - \frac{1}{q}}(\Sigma)), \\
		\rv{\pi}_{x_2 = 0} \in \W{\onehalf - \frac{1}{2q}}(J; \Lq{q}(G)) \cap \Lq{q}(J; \W{1 - \frac{1}{q}}(G)),
	\end{gather*}
	if and only if the data satisfy the following regularity and compatibility conditions:
	\begin{enumerate}
		\item $ f \in \Lq{q}(J; \Lq{q}(\OM)^3) $,
		\item $ f_d \in \WO{1}(J; \dW{-1}(\OM)) \cap \Lq{q}(J; \W{1}(\OM \backslash \Sigma)) $,
		\item $ g_1 \in \WO{1 - \frac{1}{2q}}(J; \Lq{q}(\Sigma)^3) \cap \Lq{q}(J; \W{2 - \frac{1}{q}}(\Sigma)^3) $,
		\item $ \tran{((g_2)_1, (g_2)_2)} \in \WO{\onehalf - \frac{1}{2q}}(J; \Lq{q}(\Sigma)^2) \cap \Lq{q}(J; \W{1 - \frac{1}{q}}(\Sigma)^2) $,
		\item $ (g_2)_3 \in \W{\onehalf - \frac{1}{2q}}(J; \Lq{q}(\Sigma)) \cap \Lq{q}(J; \W{1 - \frac{1}{q}}(\Sigma)) $,
		\item $ g_4 \in \W{\onehalf - \frac{1}{2q}}(J; \Lq{q}(G)) \cap \Lq{q}(J; \W{1 - \frac{1}{q}}(G \backslash \partial \Sigma)) $,
		\item $ \tran{((g_1)_1, (g_1)_3)} = 0 $,
		$ (g_2)_1 = 0 $, $ (g_2)_3 = \jump{g_4} - \jump{2 \mu f_d} $ at $ \partial \Sigma $.
	\end{enumerate}
\end{theorem}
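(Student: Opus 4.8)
The plan is to generate the missing outflow conditions on $G = \bbr \times \{0\} \times \bbr$ by a reflection in the $x_2$-variable, which turns the two-phase half-space problem into a two-phase problem on the full space $\bbr^3$ with the same flat interface $\Sigma = \bbr^2 \times \{0\}$; note that reflecting in $x_2$ leaves the two phases $\{x_3 > 0\}$ and $\{x_3 < 0\}$ intact, so the reflected problem is exactly the full-space two-phase Stokes problem, which is one of the model problems known to be well-posed (see \cite{BP2007,PS2016}). The reflection must respect the structure forced by the divergence-free condition, as already indicated in the introduction: $\Div u = 0$ together with $(u_1, u_3)|_G = 0$ forces $u_1, u_3$ to be extended as odd functions and $u_2$ as an even function of $x_2$, so that $\ptial{2} u_2$ is odd, and the momentum equation then forces the pressure $\pi$ to be extended oddly as well. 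With these parities one has $\pi|_G = 0$ and $\ptial{2} u_2|_G = 0$, so the homogeneous outflow condition $-\pi + 2\mu \ptial{2} u_2 = 0$ is recovered on $G$ automatically.

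Since the reflection produces the \emph{homogeneous} condition $-\pi + 2\mu \ptial{2} u_2 = 0$ on $G$, the first step is to reduce the data to the case $f = 0$, $f_d = 0$ and $g_4 = 0$. I would remove $f$ by solving a full-space momentum problem after extending $f$ in $x_2$ (components $f_1, f_3$ odd, $f_2$ even), and reduce $f_d$ and $g_4$ by subtracting solutions of the already established one-phase Stokes problem with outflow boundary condition in a half-space together with a divergence correction (using an extension of $f_d$ of the type \eqref{Extension:Sobolev}, which preserves the class $\WO{1}(J; \dW{-1}) \cap \Lq{q}(J; \W{1})$). The role of coordinating these reductions is precisely the contact-line compatibility in assumption (7): the identity $(g_2)_3 = \jump{g_4} - \jump{2\mu f_d}$ at $\partial\Sigma$ guarantees that, once $g_4$ and $f_d$ have been reduced to zero, the third component $(g_2)_3$ of the (modified) stress jump acquires a vanishing trace at $\partial\Sigma$, which is exactly what its odd extension in $x_2$ requires.

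With $f = f_d = g_4 = 0$, I would extend the remaining interface data according to the parities forced above: the tangential components $(g_1)_1, (g_1)_3$ and $(g_2)_1, (g_2)_3$ oddly, and the normal components $(g_1)_2, (g_2)_2$ evenly in $x_2$. By the odd/even reflection results for Sobolev--Slobodeckij spaces already used in the quarter-space proof, these extensions stay in the prescribed data classes precisely because the vanishing conditions $\tran{((g_1)_1, (g_1)_3)} = 0$ and $(g_2)_1 = (g_2)_3 = 0$ hold at $\partial\Sigma$. Solving the full-space two-phase Stokes problem with the extended data yields a unique $(u, \pi)$; by uniqueness of that problem, the reflected pair $(\tran{(-u_1, u_2, -u_3)}, -\pi)(t, x_1, -x_2, x_3)$ coincides with $(u, \pi)$, so the solution has exactly the prescribed parity. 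Restricting to $\OM = \{x_2 > 0\}$ and adding back the reductions produces a solution of \eqref{TwoPhaseStokesEqs: half-space} in the asserted regularity class.

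Uniqueness follows by reflecting any solution with vanishing data oddly/evenly in $x_2$ to a solution of the homogeneous full-space two-phase problem and invoking its uniqueness, while the necessity of the stated regularity and compatibility conditions comes, as usual, from the trace theorems for the maximal-regularity class applied to $u$, $\ptial{2} u_2$ and $\jump{\pi}$ on $\Sigma$, $G$ and $\partial\Sigma$. The main obstacle is the reduction of $g_4$ together with the bookkeeping at the contact line: $g_4$ lives on $G$, which crosses the interface $\Sigma$, so it cannot be reduced phase-by-phase, and the reductions of $f_d$ and $g_4$ must be arranged so that the resulting $(g_2)_3$ vanishes at $\partial\Sigma$. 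Checking that all reflected data remain in the correct Sobolev--Slobodeckij classes — where assumption (7) is indispensable — is the delicate part; once this is settled, the full-space solvability and the symmetry argument are routine.
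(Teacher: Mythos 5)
Your proposal is correct and follows essentially the same route as the paper: reduce $(f,f_d)$ and $g_4$ by means of full-space and half-space one-phase Stokes problems (the latter with the outflow condition, applied phase by phase after extending the data across $x_3=0$), use the contact-line identity $(g_2)_3=\jump{g_4}-\jump{2\mu f_d}$ to guarantee that the reduced $(g_2)_3$ has vanishing trace at $\partial\Sigma$, extend the remaining interface data odd/even in $x_2$, solve a full-space two-phase problem, and recover the outflow conditions on $G$ from the symmetry of the reflected solution via uniqueness. The one genuine organizational difference is your treatment of $g_1$: the paper removes the velocity jump \emph{before} the final reflection, by solving a one-phase half-space Dirichlet Stokes problem in the phase $x_3>0$ with the odd/even extension of $k=g_1-\jump{\cu}$ as boundary datum and exploiting the $x_2$-symmetry of that auxiliary solution, so that the final reflected problem is the full-space two-phase Stokes problem with $\jump{\tu}=0$ and only a stress jump, which is exactly the model problem of Pr\"uss--Simonett \cite[Theorem 3.1]{PS2010}. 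Carrying $g_1$ through to the last step, as you propose, requires instead a full-space two-phase solvability result with inhomogeneous velocity jump; such a result is available, but it carries an additional joint regularity condition on the pair $(f_d,\, g_1\cdot\nuSigma)$ of $\W{1}(J;\dW{-1})$-type stemming from the divergence equation, which your argument does not verify for the extended datum. So the paper's extra reduction step for $g_1$ is not merely cosmetic, and you should either restore it or check that hidden condition explicitly.
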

\begin{proof}
	The idea of the proof is based on the procedure in Wilke \cite{Wilke2020}, while a different type of boundary conditions (\textit{outflow} conditions) is considered in our case.
	
	\textbf{Reduction of $ f $ and $ f_d $.} Let us firstly reduce $ (f, f_d) $ to zero. To this end, we extend $ f \in \Lq{q}(J; \Lq{q}(\OM)^3) $ to some function $ \tf \in \Lq{q}(J; \Lq{q}(\bbr^3)^3) $ and
	\begin{equation*}
		f_d \in \Lq{q}(J; \W{1}(\OM \backslash \Sigma)) \cap \WO{1}(J; \dW{-1}(\OM))
	\end{equation*} 
	to some function 
	\begin{equation*}
		\tf_d \in \Lq{q}(J; \W{1}(\bbr^2 \times \dR)) \cap \WO{1}(J; \dW{-1}(\bbr^2 \times \dR))
	\end{equation*}
	by the extension \eqref{Extension:Sobolev} above with respect to $ - x_2 $ direction. Define 
	\begin{gather*}
		\tf^\pm := \rv{\tf}_{x_3 \gtrless 0} \in \Lq{q}(J; \Lq{q}(\bbr^2 \times \bbr_\pm)), \\
		\tf_d^\pm := \rv{\tf_d}_{x_3 \gtrless 0} \in \Lq{q}(J; \W{1}(\bbr^2 \times \bbr_\pm)) \cap \WO{1}(J; \dW{-1}(\bbr^2 \times \bbr_\pm)).
	\end{gather*}
	Now extend $ \tf^+ $ with respect to $ x_3 $ to some function $ \hatf^+ \in \Lq{q}(J; \Lq{q}(\bbr^3)^3) $ and $ \tf_d^+ $ to some function $ \hatf_d^+ \in \Lq{q}(J; \W{1}(\bbr^3)) \cap \W{1}(J; \dW{-1}(\bbr^3)) $. Let $ \mu^\pm := \rv{\mu}_{x_3 \gtrless 0} $, $ \rho^\pm := \rv{\rho}_{x_3 \gtrless 0} $ and extend $ \mu^+, \rho^+ \in \bbr \times \bbr_+^2 $ to $ \hat{\mu}^+ \equiv \mu^+, \hat{\rho}^+ \equiv \rho^+ \in \bbr^3 $ by constants. Then solving the full space Stokes equation
	\begin{equation} \label{StokesEqs: full-space-TwoPhaseStokes}
		\begin{alignedat}{3}
			\hat{\rho}^+ \pt \hatu^+ - \Div S_{\hat{\mu}^+} (\hatu^+, \hpi^+) & = \hatf^+, && \tin \bbr^3 \times J, \\
			\Div \hatu^+ & = \hatf_d^+, && \tin \bbr^3 \times J, \\
			\hatu^+(0) & = 0, && \tin \bbr^3,
		\end{alignedat}
	\end{equation}
	yields a unique solution $ (\hatu^+, \hpi^+) $ satisfying
	\begin{gather*}
		\hatu^+ \in \WO{1}(J; \Lq{q}(\bbr^3)^3) \cap \Lq{q}(J; \W{2}(\bbr^3)^3), \quad
		\hpi^+ \in \Lq{q}(J; \dW{1}(\bbr^3)),
	\end{gather*}
	by Pr\"uss--Simonett \cite[Theorem 7.1.1]{PS2016}. Analogously, we extend $ \tf^- $ and $ \tf_d^- $ to some function $ \hatf^- \in \Lq{q}(J; \Lq{q}(\bbr^3)^3) $ and $ \hatf_d^- \in \Lq{q}(J; \W{1}(\bbr^3)) \cap \W{1}(J; \dW{-1}(\bbr^3)) $  with respect to $ x_3 $ respectively. Let $ \hat{\mu}^- \equiv \mu^-, \hat{\rho}^- \equiv \rho^- $. Then we solve \eqref{StokesEqs: full-space-TwoPhaseStokes} with all superscripts $ ^+ $ replaced by $ ^- $ to obtain a unique solution
	\begin{gather*}
		\hatu^- \in \WO{1}(J; \Lq{q}(\bbr^3)^3) \cap \Lq{q}(J; \W{2}(\bbr^3)^3), \quad
		\hpi^- \in \Lq{q}(J; \dW{1}(\bbr^3)).
	\end{gather*}
	Based on these functions, we define
	\begin{equation*}
		(\hatu, \hpi) := \left\{
			\begin{aligned}
				& \rv{(\hatu^+, \hpi^+)}_{\OM}, && \text{ if } x_3 > 0, \\
				& \rv{(\hatu^-, \hpi^-)}_{\OM}, && \text{ if } x_3 < 0.
			\end{aligned}
		\right.
	\end{equation*}
	Then
	\begin{gather*}
		\hatu \in \WO{1}(J; \Lq{q}(\OM)^3) \cap \Lq{q}(J; \W{2}(\OM \backslash \Sigma)^3), \quad
		\hpi \in \Lq{q}(J; \dW{1}(\OM \backslash \Sigma)).
	\end{gather*}
	If $ (u, \pi) $ is a solution of \eqref{TwoPhaseStokesEqs: half-space}, then $ (u - \hatu, \pi - \hpi) $ solves \eqref{TwoPhaseStokesEqs: half-space} with $ (f, f_d) = 0 $ and some modified data $ g_j $, $ j \in \{1, 2, 3, 4\} $ (not to be relabeled), in the right regularity classes, having vanishing traces at $ t = 0 $ whenever it exists and satisfying the compatibility conditions at $ \partial \Sigma $ stated in Theorem \ref{Theorem: MP-Twophase-halfspace}.

	\textbf{Reduction of $ g_1 $ and $ g_4 $.}
	Now we extend
	\begin{equation*}
		g_4^+ := \rv{g_4}_{x_3 > 0} \in \W{\onehalf - \frac{1}{2q}}(J; \Lq{q}(\bbr \times \{0\} \times \bbr_+)) \cap \Lq{q}(J; \W{1 - \frac{1}{q}}(\bbr \times \{0\} \times \bbr_+))
	\end{equation*}
	by means of even reflection to functions
	\begin{equation*}
		\tg_4^+ \in \W{\onehalf - \frac{1}{2q}}(J; \Lq{q}(\bbr \times \{0\} \times \bbr)) \cap \Lq{q}(J; \W{1 - \frac{1}{q}}(\bbr \times \{0\} \times \bbr)).
	\end{equation*}
	Let $ \mu^+ := \rv{\mu}_{x_3 > 0} $, $ \rho^+ := \rv{\rho}_{x_3 > 0} $. Then the half-space Stokes problem with the outflow boundary condition
	\begin{equation}\label{StokesEqs: half-space_outflow}
		\begin{alignedat}{3}
			\rho^+ \pt \cu^+ - \mu^+ \Delta \cu^+ + \nabla \cpi^+ & = 0, && \tin \OM \times J, \\
			\Div \cu^+ & = 0, && \tin \OM \times J, \\
			\tran{(\cu_1^+, \cu_3^+)} & = 0, && \ton G \times J, \\
			- \cpi^+ + 2 \mu^+ \ptial{2} \cu_2^+ & = \tg_4^+, && \ton G \times J, \\
			\cu^+(0) & = 0, && \tin \OM,
		\end{alignedat}
	\end{equation}
	admits a unique solution $ (\cu^+, \cpi^+) $ with regularity
	\begin{gather*}
		\cu^+ \in \WO{1}(J; \Lq{q}(\bbr \times \bbr_+ \times \bbr)^3) \cap \Lq{q}(J; \W{2}(\bbr \times \bbr_+ \times \bbr)^3), \\
		\cpi^+ \in \Lq{q}(J; \dW{1}(\bbr \times \bbr_+ \times \bbr)), \quad
		\rv{\cpi^+}_{x_2 = 0} \in \W{\onehalf - \frac{1}{2q}}(J; \Lq{q}(G)) \cap \Lq{q}(J; \W{1 - \frac{1}{q}}(G)),
	\end{gather*}
	by Bothe--Pr\"uss \cite[Theorem 6.1]{BP2007} or Pr\"uss--Simonett \cite[Theorem 7.2.1]{PS2016}.
	Repeating the same procedure with all superscripts $ ^+ $ replaced by $ ^- $ in \eqref{StokesEqs: half-space_outflow}, where $ \tg_4^- $ is the extension of $ g_4^- := \rv{g_4}_{x_3 < 0} $ similar to $ \tg_4^+ $, we get the corresponding functions $ (\cu^-, \cpi^-) $ with regularity
	\begin{gather*}
		\cu^- \in \WO{1}(J; \Lq{q}(\bbr \times \bbr_+ \times \bbr)^3) \cap \Lq{q}(J; \W{2}(\bbr \times \bbr_+ \times \bbr)^3), \\
		\cpi^- \in \Lq{q}(J; \dW{1}(\bbr \times \bbr_+ \times \bbr)), \quad
		\rv{\cpi^-}_{x_2 = 0} \in \W{\onehalf - \frac{1}{2q}}(J; \Lq{q}(G)) \cap \Lq{q}(J; \W{1 - \frac{1}{q}}(G)).
	\end{gather*}
	
	Define
	\begin{equation*}
		(\cu, \cpi) := \left\{
			\begin{aligned}
				& (\cu^+, \cpi^+), && \text{ if } x_3 > 0, \\
				& (\cu^-, \cpi^-), && \text{ if } x_3 < 0.
			\end{aligned}
		\right.
	\end{equation*}
	Then $ (\baru, \barpi) := (u - \cu, \pi - \cpi) $ (restricted) satisfies that $ \rv{\baru}_{t = 0} = 0 $, $ \jump{\baru} = g_1 - \jump{\cu} =: k $ on $ \Sigma $ and
	\begin{equation*}
		\tran{(\baru_1, \baru_3)} = 0, \quad - \barpi + 2 \mu \ptial{2} \baru_2 = 0,
	\end{equation*}
	on $ G \backslash \partial \Sigma $, which also means $ \ptial{i} \bar{u}_j = 0 $ for $ i,j = 1, 3 $ on $ G \backslash \partial \Sigma $. From the compatibility conditions, we know $ k_1 = k_3 = 0 $ on $ \partial \Sigma $. Moreover, since $ \Div \baru = 0 $ up to the boundary, one obtains $ \ptial{2} k_2 = \jump{\ptial{2} \baru_2} = \Div \baru - \jump{\ptial{1} \baru_1 + \ptial{3} \baru_3} = 0 $ on $ \partial \Sigma $. Therefore, one may extend
	\begin{equation*}
		k \in \WO{1 - \frac{1}{2q}}(J; \Lq{q}(\Sigma)^3) \cap \Lq{q}(J; \W{2 - \frac{1}{q}}(\Sigma)^3)
	\end{equation*}
	to some function
	\begin{equation*}
		\tilde{k} \in \WO{1 - \frac{1}{2q}}(J; \Lq{q}(\bbr^2 \times \{0\})^3) \cap \Lq{q}(J; \W{2 - \frac{1}{q}}(\bbr^2 \times \{0\})^3)
	\end{equation*}
	by odd reflection of $ k_1, k_3 $ as before and even reflection of $ k_2 $. Then the half-space Dirichlet Stokes equation
	\begin{equation}\label{StokesEqs: half-space_Twophase-Dirichlet}
		\begin{alignedat}{3}
			\rho \pt w - \mu \Delta w + \nabla p & = 0, && \tin \bbr \times \bbr \times \bbr_+ \times J, \\
			\Div w & = 0, && \tin \bbr \times \bbr \times \bbr_+ \times J, \\
			w & = \tilde{k}, && \ton \bbr \times \bbr \times \{0\} \times J, \\
			w(0) & = 0, && \tin \bbr \times \bbr \times \bbr_+,
		\end{alignedat}
	\end{equation}
	admits a unique solution
	\begin{gather*}
		w \in \WO{1}(J; \Lq{q}(\bbr^2 \times \bbr_+)^3) \cap \Lq{q}(J; \W{2}(\bbr^2 \times \bbr_+)^3), \
		p \in \Lq{q}(J; \dW{1}(\bbr^2 \times \bbr_+)).
	\end{gather*}
	By symmetry, it is easy to verify that the function \begin{gather*}
		\bar{w}(t,x) = \tran{(- w_1, w_2, - w_3)}(t, x_1, - x_2, x_3), \
		\bar{p}(t,x) = - p(t, x_1, - x_2, x_3)
	\end{gather*}
	is also a solution of \eqref{StokesEqs: half-space_Twophase-Dirichlet}. The uniqueness of $ (w, p) $ implies that $ (w, p) = (\bar{w}, \bar{p}) $ and therefore $ w_1 = w_3 = 0 $ as well as $ \ptial{2} w_2 = 0 $, $ p = 0 $ on $ G \backslash \partial \Sigma $. Let $ (\baru_\pm, \barpi_\pm) := \rv{(\baru, \barpi)}_{x_3 \gtrless 0} $ and define
	\begin{equation*}
		(u^*, \pi^*) := \left\{
			\begin{aligned}
				& (\baru_+, \barpi_+) - (w, p), && \text{ if } x_3 > 0, \\
				& (\baru_-, \barpi_-), && \text{ if } x_3 < 0.
			\end{aligned}
		\right.
	\end{equation*}
	Then $ \jump{u^*} = 0 $ on $ \Sigma $ and
	\begin{equation*}
		\tran{(u^*_1, u^*_3)} = 0, \quad - \pi^* + 2 \mu \ptial{2} u^*_2 = 0,
	\end{equation*}
	on $ G \backslash \partial \Sigma $. Hence, $ (u^*, \pi^*) $ (restricted) solves \eqref{TwoPhaseStokesEqs: half-space} with $ (f, f_d, g_j) = 0 $, $ j \in \{1,4\} $ and the modified data $ g_2 $ (not to be relabeled) in proper regularity classes having vanishing trace at $ t = 0 $ whenever it exists and satisfying the compatibility conditions.
	
	\textbf{Proof of Theorem \ref{Theorem: MP-Twophase-halfspace}.} From the compatibility conditions and boundary conditions on $ G \backslash \partial \Sigma $, it is possible to extend $ ((g_2)_1, (g_2)_3) $ by odd reflection and $ (g_2)_2 $ by even reflection with respect to $ - x_2 $. In the following, we consider the reflected problem
	\begin{equation}\label{TwoPhaseStokesEqs: full-space}
		\begin{alignedat}{3}
			\rho \pt \tu - \mu \Delta \tu + \nabla \tpi & = 0, && \tin \bbr \times \bbr \times \dR \times J, \\
			\Div \tu & = 0, && \tin \bbr \times \bbr \times \dR \times J, \\
			\jump{u} & = 0, && \ton \bbr \times \bbr \times \{0\} \times J, \\
			\jump{- \tpi e_3 + \mu \left( \ptial{3} \tu + \nabla \tu_3 \right)} & = \tg_2, && \ton \bbr \times \bbr \times \{0\} \times J, \\
			\tu(0) & = 0, && \tin \bbr \times \bbr \times \dR.
		\end{alignedat}
	\end{equation}
	with given reflected data
	\begin{gather*}
		((\tg_2)_1, (\tg_2)_2) \in \WO{\onehalf - \frac{1}{2q}}(J; \Lq{q}(\bbr^2 \times \{0\})^2) \cap \Lq{q}(J; \W{1 - \frac{1}{q}}(\bbr^2 \times \{0\})^2), \\
		(\tg_2)_3 \in \W{\onehalf - \frac{1}{2q}}(J; \Lq{q}(\bbr^2 \times \{0\})) \cap \Lq{q}(J; \dW{1 - \frac{1}{q}}(\bbr^2 \times \{0\})).
	\end{gather*}
	By Pr\"uss--Simonett \cite[Theorem 3.1]{PS2010}, \eqref{TwoPhaseStokesEqs: full-space} admits a unique solution $ (\tu, \tpi) $ with
	\begin{gather*}
		\tu \in \W{1}(J; \Lq{q}(\bbr^3)^3) \cap \Lq{q}(J; \W{2}(\bbr^2 \times \dR)^3), \
		\tpi \in \Lq{q}(J; \dW{1}(\bbr^2 \times \dR)), \\
		\jump{\tpi} \in \W{\onehalf - \frac{1}{2q}}(J; \Lq{q}(\bbr^2 \times \{0\})) \cap \Lq{q}(J; \W{1 - \frac{1}{q}}(\bbr^2 \times \{0\})).
	\end{gather*}
	Define $ (\baru, \barpi) $ as
	\begin{gather*}
		\baru(t,x) := \tran{(- u_1, u_2, - u_3)}(t, x_1, - x_2, x_3), \
		\barpi(t,x) := - \pi(t, x_1, - x_2, x_3).
	\end{gather*}
	It is clear that $ (\baru, \barpi) $ is also a solution of \eqref{TwoPhaseStokesEqs: full-space}. The uniqueness implies that $ (\tu, \tpi) = (\hatu, \hpi) $ and therefore, 
	\begin{equation*}
		\tu_1 = \tu_3 = 0, \quad \ptial{2} \tu_2 = 0,
	\end{equation*}
	as well as $ \tpi = 0 $ on $ G \backslash \partial \Sigma $. Consequently, the restriction $ \rv{(\tu, \tpi)}_{\OM} $ is the strong solution of \eqref{TwoPhaseStokesEqs: half-space} with $ (f, f_d, g_j) = 0 $, $ j \in \{1,4\} $, while the uniqueness can be verified by a similar argument as in Section \ref{Section: Stokes-quarter}.
\end{proof}

\subsection{The two-phase Stokes equations in half-spaces with a bent interface} \label{Section: TwoPhase-Bent-halfspace}
In this section, we consider the case of a bent interface for the two-phase Stokes equations in half-spaces. So let the interface $ \Sigma $ be given as a graph of a function $ \theta : \bbr \times \bbr_+ \rightarrow \bbr $ of class $ C^3 $. Moreover, let $ x = (x', x_3) $, $ x' = (x_1, x_2) $ and $ \nabla_{x'} = \tran{(\ptial{1}, \ptial{2})} $, we assume that $ \norm{\nabla_{x'} \theta(x')}_\infty \leq \eta $ and $ \norm{\nabla_{x'}^i \theta(x')}_\infty \leq M $, $ i \in \{2,3\} $ for $ M > 0 $, where $ \eta > 0 $ will be chosen sufficiently small later. Then the interface is defined as
\begin{equation*}
	\Sigtheta := \left\{ (x', x_3) \in \bbr^3 : x_3 = \theta(x'), x' \in \bbr \times \bbr_+ \right\},
\end{equation*}
and the associated unit normal vector is given by
\begin{equation*}
	\nuSigtheta(x') := \beta(x') \tran{(- \nabla_{x'} \theta(x'), 1)}, \quad 
	\beta(x') := \frac{1}{\sqrt{1 + |\nabla_{x'} \theta(x')|^2}}.
\end{equation*}
Let $ \OM := \bbr \times \bbr_+ \times \bbr $, $ G := \bbr \times \{ 0 \} \times \bbr $ and 
\begin{equation*}
	\partial \Sigtheta := \left\{ (x', x_3): x_3 = \theta(x'), x' \in \bbr \times \{0\} \right\}.
\end{equation*}
We consider the problem
\begin{equation} \label{TwoPhaseStokesEqs: half-space_bent interface}
	\begin{alignedat}{3}
		\rho \pt u - \Div \Smu(u, \pi) & = f, && \tin \OM \backslash \Sigtheta \times J, \\
		\Div u & = f_d, && \tin \OM \backslash \Sigtheta \times J, \\
		\jump{u} & = g_1, && \ton \Sigtheta \times J, \\
		\jump{- \pi \bbi + \mu ( \nabla u + \tran{\nabla} u )} \nuSigtheta & = g_2, && \ton \Sigtheta \times J, \\
		\tran{(u_1, u_3)} & = g_3, && \ton G \backslash \partial \Sigtheta \times J, \\
		- \pi + 2 \mu \ptial{2} u_2 & = g_4, && \ton G \backslash \partial \Sigtheta \times J, \\
		u(0) & = u_0, && \tin \OM \backslash \Sigtheta,
	\end{alignedat}
\end{equation}
where $ \Smu(u, \pi) = - \pi \bbi + \mu ( \nabla u + \tran{\nabla} u ) $ and $ \rho, \mu > 0 $ are given constants. 
For given data $ (f, f_d, g_1, g_2, g_3, g_4, u_0) $, we give the following compatibility conditions at time $ t = 0 $
\begin{gather*}
	\Div u_0 = \rv{f_d}_{t = 0}, \quad
	\jump{u_0} = \rv{g_1}_{t = 0}, \quad \tran{((u_0)_1, (u_0)_3)}|_{G} = \rv{g_3}_{t = 0}, \\
	\PSigtheta \jump{\mu ( \nabla u_0) + \tran{\nabla} u_0 ) \nuSigtheta} = \PSigtheta g_2 |_{t = 0},
\end{gather*}
where $ \PSigtheta := I - \nuSigtheta \otimes \nuSigtheta $ denotes the tangential projection of $ \nuSigtheta $.
Moreover, at the contact line $ \partial \Sigtheta $, we have
\begin{equation}
	\label{Compatibility:two-phase-bent}
	\begin{gathered}
		\jump{g_3} = \tran{((g_1)_1, (g_1)_3)}, \\
		\begin{aligned}
			(g_2)_1 & = \jump{2 \mu \ptial{1} (g_3)_1 \nuSigtheta \cdot e_1 + \mu (\ptial{1} (g_3)_2 + \ptial{3} (g_3)_1) \nuSigtheta \cdot e_3}, \\
				& \quad + \jump{g_4 - 2 \mu f_d} \nuSigtheta \cdot e_1 + \jump{2 \mu (\ptial{1} (g_3)_1 + \ptial{3} (g_3)_2)} \nuSigtheta \cdot e_1,
		\end{aligned} \\
		\begin{aligned}
			(g_2)_3 & = \jump{2 \mu \ptial{3} (g_3)_2 \nuSigtheta \cdot e_3 + \mu (\ptial{1} (g_3)_2 + \ptial{3} (g_3)_1) \nuSigtheta \cdot e_1} \\
				& \quad + \jump{g_4 - 2 \mu f_d} \nuSigtheta \cdot e_3 + \jump{2 \mu (\ptial{1} (g_3)_1 + \ptial{3} (g_3)_2)} \nuSigtheta \cdot e_3.
		\end{aligned}
	\end{gathered}
\end{equation}

\subsubsection{Reduction}
As above, we want to reduce \eqref{TwoPhaseStokesEqs: half-space_bent interface} to the case $ (u_0, f, f_d, g_3, g_4) = 0 $ for convenience. In a same way as in Section \ref{subsubsection:reduction-bent-quarter}, one can reduce to the case $ (u_0, f, f_d) = 0 $, we do not give the details here. Then we have modified data $ g_j $, $ j \in \{1,2,3,4\} $, in the right regularity classes and having vanishing traces at $ t = 0 $ whenever it exists. 

Next, set $ g_j^\pm := \rv{g_j}_{x_3 \gtrless \theta(x_1, 0)} $, $ j \in \{3,4\} $. By the extension
\begin{equation} \label{Extension:x_3theta}
	\tg_3^+(x_1, 0, x_3) = 
	\left\{
		\begin{aligned}
			& g_3^+(x_1, x_3 - \theta(x_1, 0)), && \text{ if } x_1 \in \bbr, x_3 > \theta(x_1, 0), \\
			& - g_3^+(x_1, -2 (x_3 - \theta(x_1, 0))) \\
			& \qquad + 2 g_3^+(x_1, - (x_3 - \theta(x_1, 0)) / 2), && \text{ if } x_1 \in \bbr, x_3 < \theta(x_1, 0),
		\end{aligned}
	\right.
\end{equation}
we have
\begin{equation*}
	\tg_3^+ \in \WO{1 - \frac{1}{2q}}(J; \Lq{q}(\bbr \times \{0\} \times \bbr)^2) \cap \Lq{q}(J; \W{2 - \frac{1}{q}}(\bbr \times \{0\} \times \bbr)^2).
\end{equation*}
Employing an even reflection to $ g_4^+ $, one obtains the extended function
\begin{equation*}
	\tg_4^+ \in \W{\onehalf - \frac{1}{2q}}(J; \Lq{q}(\bbr \times \{0\} \times \bbr)^2) \cap \Lq{q}(J; \W{1 - \frac{1}{q}}(\bbr \times \{0\} \times \bbr)^2).
\end{equation*}
Let $ \mu^+ := \rv{\mu}_{x_3 > 0} $, $ \rho^+ := \rv{\rho}_{x_3 > 0} $. Now consider the half-space Stokes equation with the outflow boundary condition
\begin{equation}\label{StokesEqs: bent half-space_outflow}
	\begin{alignedat}{3}
		\rho^+ \pt \cu^+ - \mu^+ \Delta \cu^+ + \nabla \cpi^+ & = 0, && \tin \OM \times J, \\
		\Div \cu^+ & = 0, && \tin \OM \times J, \\
		\tran{(\cu_1^+, \cu_3^+)} & = \tg_3^+, && \ton G \times J, \\
		- \cpi^+ + 2 \mu^+ \ptial{2} \cu_2^+ & = \tg_4^+, && \ton G \times J, \\
		\cu^+(0) & = 0, && \tin \OM,
	\end{alignedat}
\end{equation}
which admits a unique solution $ (\cu^+, \cpi^+) $ with regularity
\begin{gather*}
	\cu^+ \in \WO{1}(J; \Lq{q}(\OM)^3) \cap \Lq{q}(J; \W{2}(\OM)^3), \\
	\cpi^+ \in \Lq{q}(J; \dW{1}(\OM)), \quad \rv{\cpi^+}_{x_2 = 0} \in \W{\onehalf - \frac{1}{2q}}(J; \Lq{q}(G)) \cap \Lq{q}(J; \W{1 - \frac{1}{q}}(G)),
\end{gather*}
by Bothe--Pr\"uss \cite[Theorem 6.1]{BP2007} or Pr\"uss--Simonett \cite[Theorem 7.2.1]{PS2016}. Repeating the same procedure with all superscripts $ ^+ $ replaced by $ ^- $ in \eqref{StokesEqs: bent half-space_outflow}, where $ \tg_j^- $ are the extensions of $ g_j^- := \rv{g_j}_{x_3 < \theta(x_1, 0)} $ similar to $ \tg_j^+ $, $ j = 3,4 $, we get the corresponding functions $ (\cu^-, \cpi^-) $ with regularity
\begin{gather*}
	\cu^- \in \WO{1}(J; \Lq{q}(\OM)^3) \cap \Lq{q}(J; \W{2}(\OM)^3), \\
	\cpi^- \in \Lq{q}(J; \dW{1}(\OM)), \quad
	\rv{\cpi^-}_{x_2 = 0} \in \W{\onehalf - \frac{1}{2q}}(J; \Lq{q}(G)) \cap \Lq{q}(J; \W{1 - \frac{1}{q}}(G)).
\end{gather*}

We define
\begin{equation*}
	(\cu, \cpi) := \left\{
		\begin{aligned}
			& (\cu^+, \cpi^+), && \text{ if } x_3 > \theta(x'), \\
			& (\cu^-, \cpi^-), && \text{ if } x_3 < \theta(x').
		\end{aligned}
	\right.
\end{equation*}
Then $ (\baru, \barpi) := (u - \cu, \pi - \cpi) $ (restricted) solves \eqref{TwoPhaseStokesEqs: half-space_bent interface} with $ (u_0, f, f_d, g_3, g_4) = 0 $ and modified data $ (g_1, g_2) $ (not to be relabeled) belonging to proper regularity classes with vanishing traces at $ t = 0 $ whenever they exist and satisfying
\begin{equation*}
	\tran{((g_1)_1, (g_1)_3)} = 0, \quad
	\tran{((g_2)_1, (g_2)_3)} = 0, \quad 
	\ton G.
\end{equation*}


\subsubsection{Transform to a half space}
By the reduction argument explained above we may assume
\begin{equation*}
	(u_0, f, f_d, g_3, g_4) = 0.
\end{equation*}
Our aim now is to transform $ \Sigtheta $ to $ \Sigma := \bbr \times \bbr_+ \times \{0\} $ and $ \partial \Sigtheta $ to $ \partial \Sigma := \bbr \times \{0\} \times \{0\} $. To this end, we introduce the new variables $ \barx' = x' $ and $ \barx_3 = x_3 - \theta(x') $, where $ \barx' := (\barx_1, \barx_2) $, for $ x \in \OM $, as in Section \ref{subsubsection:transform-quarter}. Then we define
\begin{equation*}
	\baru(t, \barx) := u(t, \barx', \barx_3 + \theta(\barx')), \quad
	\barpi(t, \barx) := \pi(t, \barx', \barx_3 + \theta(\barx')),
\end{equation*}
for $ t \in J $, $ \barx' \in \bbr \times \bbr_+ $. In the same way, we transform the data $ (g_1, g_2) $ to $ (\bar{g}_1, \bar{g}_2) $. In the following, we will write $ \ptial{\barx_3} $ as $ \ptial{3} $ for the sake of readability. Then with the help of \eqref{transform-relation-low} and \eqref{transform-relation-high} in Section \ref{subsubsection:transform-quarter}, one arrives at the transformed system
\begin{equation} \label{TwoPhaseStokesEqs: half-space_bent interface_transformed}
	\begin{alignedat}{3}
		\rho \pt \baru - \mu \Delta \baru + \nabla \barpi & = M_1(\theta, \baru, \barpi), && \tin \OM \backslash \Sigma \times J, \\
		\Div \baru & = M_2(\theta, \baru), && \tin \OM \backslash \Sigma \times J, \\
		\jump{\baru} & = \bar{g}_1, && \ton \Sigma \times J, \\
		\jump{- \barpi e_3 + \mu ( \ptial{3} \baru + \nabla \baru_3 )} & = M_4(\theta, \baru) + (\bar{g}_2 + \nabla \theta (\bar{g}_2)_3)/\beta, && \ton \Sigma \times J, \\
		\tran{(\baru_1, \baru_3)} & = 0, && \ton G \backslash \partial \Sigma \times J, \\
		- \barpi + 2 \mu \ptial{2} \baru_2 & = M_5(\theta, \baru), && \ton G \backslash \partial \Sigma \times J, \\
		\baru(0) & = 0, && \tin \OM \backslash \Sigma,
	\end{alignedat}
\end{equation}
where $ M_j $, $ j = \{1, 2, 4, 5\} $, are given by
\begin{align*}
	M_1(\theta, \baru, \barpi) & := \mu \big( - 2 \ptial{3} \nabla \baru \nabla \theta
		+ \nabla^2 \theta \ptial{3} \baru
		+ (\ptial{3}^2 \baru \otimes \nabla \theta) \nabla \theta \big)
		+ \nabla \theta(\barx') \ptial{3} \barpi, \\
	M_2(\theta, \baru) & := \nabla \theta \cdot \ptial{3} \baru, \\
	M_4(\theta, \baru) & := \jump{\mu (\nabla \baru + \tran{\nabla} \baru)} \nabla \theta
	- \jump{\mu (\nabla \theta \otimes \ptial{3} \baru + \ptial{3} \baru \otimes \nabla \theta)} \nabla \theta \\
	 & \qquad \qquad \qquad \qquad + (\jump{\mu \ptial{3} \baru_3}/ \beta^2 - \jump{\mu (\ptial{3} \baru + \nabla \baru_3)} \cdot \nabla \theta) \nabla \theta, \\
	M_5(\theta, \baru) & := 2 \mu \ptial{2} \theta(x') \ptial{3} \baru_2.
\end{align*}

\subsubsection{Existence and uniqueness} \label{Bent TwoPhaseStokes: Existence and uniqueness}
As in Section \ref{Bent Stokes: Existence and uniqueness} and \cite[Section 1.3.4]{Wilke2020}, if the data are in the right regularity classes and $ \norm{\nabla \theta}_\infty \leq \eta $, $ \eta > 0 $ is sufficiently small, one may consider \eqref{TwoPhaseStokesEqs: half-space_bent interface_transformed} as a perturbation of \eqref{TwoPhaseStokesEqs: half-space_bent interface} and apply the Neumann series argument to \eqref{TwoPhaseStokesEqs: half-space_bent interface_transformed} via Theorem \ref{Theorem: MP-Twophase-halfspace}. To this end, we define the solution spaces as
\begin{equation*}
	\bbeo(J) := \bbeo_u(J) \times \bbe_\pi(J),
\end{equation*}
where
\begin{align*}
	& \bbeo_u(J) := \left\{ u \in \WO{1}(J; \Lq{q}(\OM)^3) \cap \Lq{q}(J; \W{2}(\OM \backslash \Sigma)^3): \rv{\tran{(u_1, u_3)}}_G = 0 \right\}, \\
	& \bbe_\pi(J) := \left\{ 
		\begin{aligned}
			& \pi \in \Lq{q}(J; \dW{1}(\OM)): 
			\jump{\pi} \in \W{\onehalf - \frac{1}{2q}}(J; \Lq{q}(\Sigma)) \cap \Lq{q}(J; \W{1 - \frac{1}{q}}(\Sigma)), \\
			& \qquad \qquad \qquad \quad \rv{\pi}_G \in \W{\onehalf - \frac{1}{2q}}(J; \Lq{q}(G)) \cap \Lq{q}(J; \W{1 - \frac{1}{q}}(G \backslash \partial \Sigma))
		\end{aligned}
	\right\}, 
\end{align*}
as well as the data function spaces
\begin{equation*}
	\widetilde{\bbf}(J) := \bbf_1(J) \times \bbfo_2(J) \times \bbfo_3(J) \times \bbfo_4(J) \times \bbfo_5(J),
\end{equation*}
where
\begin{align*}
	\bbf_1(J) & := \Lq{q}(J; \Lq{q}(\OM)^3), \\
	\bbfo_2(J) & := \WO{1}(J; \dW{-1}(\OM \backslash \Sigma)) \cap \Lq{q}(J; \W{1}(\OM \backslash \Sigma)), \\
	\bbfo_3(J) & := \WO{1 - \frac{1}{2q}}(J; \Lq{q}(\Sigma)^3) \cap \Lq{q}(J; \W{2 - \frac{1}{q}}(\Sigma)^3), \\
	\bbfo_4(J) & := \WO{\onehalf - \frac{1}{2q}}(J; \Lq{q}(\Sigma)^3) \cap \Lq{q}(J; \W{1 - \frac{1}{q}}(\Sigma)^3), \\
	\bbfo_5(J) & := \WO{\onehalf - \frac{1}{2q}}(J; \Lq{q}(G)) \cap \Lq{q}(J; \W{1 - \frac{1}{q}}(G \backslash \partial \Sigma)).
\end{align*}
In addition, let
\begin{align*}
	\bbfo(J) := \left\{ 
		(f_1, f_2, g_1, g_2, g_3) \in \widetilde{\bbf}(J): 
		\begin{aligned}
			\tran{((g_1)_1, (g_1)_3)} = 0, \  (g_2)_1 = 0, \text{ at } \partial \Sigma \\
			(g_2)_3 = \jump{g_3 - 2 \mu f_2}, \text{ at } \partial \Sigma
		\end{aligned}
	\right\}.
\end{align*}
Now we define an operator $ L : \bbeo(J) \rightarrow \bbfo(J) $ by
\begin{equation*}
	L(\baru, \barpi) := \left[ 
	\begin{gathered}
		\rho \pt \baru - \mu \Delta \baru + \nabla \barpi \\
		\Div \baru \\
		\jump{\baru} \\
		\jump{- \barpi e_3 + \mu ( \ptial{3} \baru + \nabla \baru_3 )} \\
		\rv{(- \barpi + 2 \mu \ptial{2} \baru_2)}_G
	\end{gathered}
	\right].
\end{equation*}
Then $ L $ is an isomorphism by Theorem \ref{Theorem: MP-Twophase-halfspace}. For the right-hand side of \eqref{StokesEqs: Bent-quarter_transformed}, we set
\begin{align*}
	M(\theta, \baru, \barpi)
	:= \tran{(M_1(\theta, \baru, \barpi), M_2(\theta, \baru), 0, M_4(\theta, \baru), M_5(\theta, \baru))}
\end{align*}
and 
\begin{align*}
	F := \tran{(0, 0, f_3, f_4, 0)}, \quad f_3 := \bar{g}_1, \quad f_4 := (\bar{g}_2 + \nabla \theta (\bar{g}_2)_3)/\beta.
\end{align*}
It is clear that $ F \in \widetilde{\bbf}(J) $, since $ \theta \in C^3(\bbr \times \bbr_+) $. We note that $ \tran{((\bar{g}_1)_1, (\bar{g}_1)_3)} = 0 $, $ ((\bar{g}_2 + \nabla \theta (\bar{g}_2)_3)/\beta)_1 = 0 $ as well as $ (\bar{g}_2)_3 = 0 $ at the contact line $ \partial \Sigma $, one obtains $ F \in \bbfo(J) $.

To apply the Neumann series argument, we investigate the perturbation term $ M(\theta, \baru, \barpi) $. Note that from the compatibility conditions, $ (M_4(\theta, \baru))_1 = 0 $ as well as $ (M_4(\theta, \baru))_3 = 0 $ must be true at the contact line $ \partial \Sigma $, since $ 2 \mu M_2(\theta, \baru) = M_5(\theta, \baru) $ at the contact line $ \partial \Sigma $. However, these do not hold in general, namely,
\begin{align*}
	(M_4(\theta, \baru))_1 & = \ptial{2} \theta \mu ( \ptial{1} \baru_2 + \ptial{2} \baru_1 + (1 + \ptial{1} \theta)\ptial{3} \baru_2 + \ptial{2} \baru_3 ), \text{ at } \partial \Sigma, \\
	(M_4(\theta, \baru))_3 & = \ptial{2} \theta \mu ( \ptial{2} \baru_3 + \ptial{3} \baru_2 ), \text{ at } \partial \Sigma.
\end{align*}
To overcome this trouble, we follow the argument in \cite{Wilke2020} by introducing a modified $ \Tilde{M}_4(\theta, \baru) $ as
\begin{equation*}
	\Tilde{M}_4(\theta, \baru) = (M_4(\theta, \baru))_1 - \ptial{2} \theta \tran{( K_1, 0, K_2 )},
\end{equation*}
where
\begin{align*}
	K_1(\theta, \baru) & = \ext_\Sigma \left( \jump{\rv{\mu ( \ptial{1} \baru_2 + \ptial{2} \baru_1 + (1 + \ptial{1} \theta)\ptial{3} \baru_2 + \ptial{2} \baru_3 )}_{G \backslash \partial \Sigma}} \right), \\
	K_2(\theta, \baru) & = \ext_\Sigma \left( \jump{\rv{\mu ( \ptial{2} \baru_3 + \ptial{3} \baru_2 )}_{G \backslash \partial \Sigma}} \right).
\end{align*}
Here, by Wilke \cite[Proposition 5.1]{Wilke2020}, there exists a linear and bounded extension operator $ \ext_\Sigma $ from
\begin{equation*}
	\WO{\onehalf - \frac{1}{q}}(J; \Lq{q}(\partial \Sigma)) \cap \Lq{q}(J; \W{1 - \frac{2}{q}}(\partial \Sigma))
\end{equation*}
to
\begin{equation*}
	\WO{\onehalf - \frac{1}{2q}}(J; \Lq{q}(\Sigma)) \cap \Lq{q}(J; \W{1 - \frac{1}{q}}(\Sigma)),
\end{equation*}
such that $ \rv{(\ext_\Sigma z)}_{\partial \Sigma} = z $ for all $ z \in \WO{1/2 - 1/q}(J; \Lq{q}(\partial \Sigma)) \cap \Lq{q}(J; \W{1 - 2/q}(\partial \Sigma)) $.
Define
\begin{align*}
	\Tilde{M}(\theta, \baru, \barpi)
	:= \tran{(M_1(\theta, \baru, \barpi), M_2(\theta, \baru), 0, \Tilde{M}_4(\theta, \baru), M_5(\theta, \baru))}.
\end{align*}
Then it can be verified easily that the modified perturbation $ \Tilde{M}(\theta, \baru, \barpi) \in \bbfo(J) $ for each $ (\baru, \barpi) \in \bbeo(J) $, combining the smoothness of $ \theta $. The only point that needs to be taken care of is $ M_2(\theta, \baru) \in \WO{1}(J; \dW{-1}(\OM \backslash \Sigma)) $. By integration by parts with respect to the $ \barx_3 $, we have
\begin{equation} \label{M2-W^-1_TwoPhase}
	\int_\OM M_2(\theta, \baru) \phi \d \barx
	= \int_\OM \nabla \theta \cdot \ptial{3} \baru \phi \d \barx 
	= - \int_\OM \nabla \theta \cdot  \baru \ptial{3} \phi \d \barx, 
\end{equation}
for all $ \phi \in W_{q',0}^1(\OM \backslash \Sigma) $, which implies the claim with $ \baru \in \bbeo_u(J) $.

Then we rewrite \eqref{TwoPhaseStokesEqs: half-space_bent interface_transformed}, with $ M_4 $ replaced by $ \Tilde{M}_4 $, as
\begin{align*}
	(\baru, \barpi) = L^{-1} \Tilde{M}(\theta, \baru, \barpi) + L^{-1} F.
\end{align*}
Our goal in the following is to show that for each $ \varepsilon > 0 $, there exist $ T_0 > 0 $ and $ \eta_0 > 0 $ such that
\begin{equation} \label{M-epsilon-TwoPhase}
	\norm{\Tilde{M}(\theta, \baru, \barpi)}_{\bbfo(J)} \leq \varepsilon \norm{(\baru, \barpi)}_{\bbeo(J)}
\end{equation}
holds for $ 0 < T < T_0 $ and $ 0 < \eta < \eta_0 $.

We estimate $ \Tilde{M}(\theta, \baru, \barpi) $ term by term. Mimicking the estimates in Section \ref{Bent Stokes: Existence and uniqueness}, one obtains
\begin{align}
	\label{M_1-epsilon-TwoPhase}
	\norm{M_1(\theta, \baru, \barpi)}_{\bbf_1(J)} 
	& \leq C ( \norm{\nabla \theta}_\infty + \norm{\nabla \theta}_\infty^2 + T^{\frac{1}{2q}} \norm{\nabla^2 \theta}_\infty ) \norm{(\baru, \barpi)}_{\bbeo(J)}, \\
	\label{M_2-epsilon_TwoPhase}
	\norm{M_2(\theta, \baru)}_{\bbfo_2(J)}
	& \leq C ( \norm{\nabla \theta}_\infty + T^{\frac{1}{2q}} \norm{\nabla^2 \theta}_\infty ) \norm{\baru}_{\bbeo_u(J)}.
\end{align}
where $ C > 0 $ does not depend on $ T > 0 $. For $ \Tilde{M}_4(\theta, \baru) \in \bbfo_4(J) $, we see that
\begin{align*}
	\norm{\Tilde{M}_4(\theta, \baru)}_{\bbfo_4(J)}
	= \norm{\Tilde{M}_4(\theta, \baru)}_{\WO{\onehalf - \frac{1}{2q}}(J; \Lq{q}(\Sigma)^3)} 
	+ \norm{\Tilde{M}_4(\theta, \baru)}_{\Lq{q}(J; \W{1 - \frac{1}{q}}(\Sigma)^3)}.
\end{align*}
The first term in the right-hand side above is estimated by
\begin{align*}
	\norm{\Tilde{M}_4(\theta, \baru)}_{\WO{\onehalf - \frac{1}{2q}}(J; \Lq{q}(\Sigma)^3)} 
	& \leq C (\norm{\nabla \theta}_\infty + \norm{\nabla \theta}_\infty^2) \norm{\nabla \baru}_{\WO{\onehalf - \frac{1}{2q}}(J; \Lq{q}(\Sigma)^{3 \times 3})} \\
	& \leq C (\norm{\nabla \theta}_\infty + \norm{\nabla \theta}_\infty^2) \norm{\baru}_{\bbeo_u},
\end{align*}
while the second term satisfies
\begin{align*}
	& \norm{\Tilde{M}_4(\theta, \baru)}_{\Lq{q}(J; \W{1 - \frac{1}{q}}(\Sigma)^3)}
	\leq C (\norm{\nabla \theta}_\infty + \norm{\nabla \theta}_\infty^2) \norm{\nabla \baru}_{\Lq{q}(J; \W{1 - \frac{1}{q}}(\Sigma)^{3 \times 3})} \\
	& \qquad \qquad \qquad + C (1 + \norm{\nabla \theta}_\infty) \norm{\nabla^2 \theta}_\infty \norm{\nabla \baru}_{\Lq{q}(J; \Lq{q}(\OM \backslash \Sigma)^{3 \times 3})} \\
	& \qquad \leq C (\norm{\nabla \theta}_\infty + \norm{\nabla \theta}_\infty^2) \norm{\baru}_{\bbeo_u} 
	+ C T^{\frac{1}{2q}} (1 + \norm{\nabla \theta}_\infty) \norm{\nabla^2 \theta}_\infty \norm{\baru}_{\bbeo_u},
\end{align*}
by the trace theory of Sobolev spaces. Hence,
\begin{align}
	\label{M_4-epsilon-TwoPhase}
	& \norm{\Tilde{M}_4(\theta, \baru)}_{\bbfo_4(J)} \\
	& \qquad \leq C (\norm{\nabla \theta}_\infty + \norm{\nabla \theta}_\infty^2) \norm{\baru}_{\bbeo_u} 
	+ C T^{\frac{1}{2q}} (1 + \norm{\nabla \theta}_\infty) \norm{\nabla^2 \theta}_\infty \norm{\baru}_{\bbeo_u}, \nonumber
\end{align}
The same estimate as in \eqref{M_3-epsilon} yields that
\begin{equation}
	\label{M_5-epsilon-TwoPhase}
	\norm{M_5(\theta, \baru)}_{\bbfo_5(J)} 
	\leq C \norm{\nabla \theta}_\infty \norm{\baru}_{\bbeo_u},
\end{equation}
Collecting \eqref{M_1-epsilon-TwoPhase}--\eqref{M_5-epsilon-TwoPhase}, we have
\begin{align*}
	& \norm{\Tilde{M}(\theta, \baru, \barpi)}_{\bbfo(J)} \\
	& \qquad
	\leq C \left( \norm{\nabla \theta}_\infty + \norm{\nabla \theta}_\infty^2
	+ C T^{\frac{1}{2q}} (1 + \norm{\nabla \theta}_\infty) \norm{\nabla^2 \theta}_\infty \right) \norm{(\baru, \barpi)}_{\bbeo(J)}.
\end{align*}
Since $ \norm{\nabla_{x'} \theta(x')}_\infty \leq \eta $ and $ \norm{\nabla_{x'}^2 \theta(x')}_\infty \leq M $, by choosing $ \eta > 0 $ and $ T > 0 $ sufficiently small, one obtains \eqref{M-epsilon-TwoPhase}. A Neumann series argument in $ \bbeo(J) $ finally implies that there exists a unique solution $ (\baru, \barpi) \in \bbeo(J) $ of $ L(\baru, \barpi) = \Tilde{M}(\theta, \baru, \barpi) + F $ or equivalently a solution $ (u, \pi) $ of \eqref{TwoPhaseStokesEqs: half-space_bent interface}.

Consequently, \eqref{TwoPhaseStokesEqs: half-space_bent interface} admits a solution operator $ S_{HS} : \bbf_{HS} \rightarrow \bbe_{HS} $, where $ \bbf_{HS} $ and $ \bbe_{HS} $ are the solution space and data space respectively, for a half-space with a bent interface and the data in $ \bbf_{HS} $ satisfy the compatibility conditions \eqref{Compatibility:two-phase-bent}.

%

\subsection{The heat equations in quarter-spaces} \label{Section: Heat-quarter}
As in Section \ref{Section: Stokes-quarter}, define $ \OM := \bbr \times \bbr_+ \times \bbr_+ $, $ G := \bbr \times \{ 0 \} \times \bbr_+ $ and $ S := \bbr \times \bbr_+ \times \{ 0 \} $. Consider the problem
\begin{equation}\label{HeatEqs: quarter}
	\begin{alignedat}{3}
		\pt u - D \Delta u & = f, && \tin \OM \times J, \\
		D \ptial{2} u & = g_1, && \ton G \times J, \\
		D \ptial{3} u & = g_2, && \ton S \times J, \\
		u(0) & = u_0, && \tin \OM,
	\end{alignedat}
\end{equation}
where $ u : \bbr^3 \times \bbr_+ \rightarrow \bbr $ is the quantity of the system, e.g., the temperature, the concentration, etc. $ D > 0 $ denotes the constant diffusivity. 

\begin{theorem} \label{Theorem: MP-Heat-quarterspace}
	Let $ q > 5 $, $ T > 0 $, $ D > 0 $ and $ J = (0,T) $. Assume that $ G, S \in C^3 $ and $ \partial G $ is of class $ C^4 $. Then there exists a unique solution 
	\begin{gather*}
		u \in \W{1}(J; \Lq{q}(\OM)) \cap \Lq{q}(J; \W{2}(\OM)), 
	\end{gather*}
	of \eqref{HeatEqs: quarter} if and only if the data satisfy the following regularity and compatibility conditions:
	\begin{enumerate}
		\item $ f \in \Lq{q}(J; \Lq{q}(\OM)) $,
		\item $ g_1 \in \W{\onehalf - \frac{1}{2q}}(J; \Lq{q}(G)) \cap \Lq{q}(J; \W{1 - \frac{1}{q}}(G)) $,
		\item $ g_2 \in \W{\onehalf - \frac{1}{2q}}(J; \Lq{q}(S)) \cap \Lq{q}(J; \W{1 - \frac{1}{q}}(S)) $,
		\item $ u_0 \in \W{2 - \frac{2}{q}}(\OM) $,
		\item $ \rv{D \ptial{2} u_0}_G = \rv{g_1}_{t = 0} $, $ \rv{D \ptial{3} u_0}_S = \rv{g_2}_{t = 0} $,
		\item $ \rv{\ptial{3} g_1}_{\Bar{G} \cap \Bar{S}} = \rv{\ptial{2} g_2}_{\Bar{G} \cap \Bar{S}} $.
	\end{enumerate}
\end{theorem}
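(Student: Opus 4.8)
The statement is an equivalence, so I would first dispose of necessity, which is the easy direction. If $u \in \W{1}(J;\Lq{q}(\OM)) \cap \Lq{q}(J;\W{2}(\OM))$ solves \eqref{HeatEqs: quarter}, then the regularity in (1)--(4) and the time-trace relations $\rv{D\ptial{2}u_0}_G = \rv{g_1}_{t=0}$, $\rv{D\ptial{3}u_0}_S = \rv{g_2}_{t=0}$ are immediate from the trace theory of the maximal-regularity class (continuity of both the spatial Neumann trace and the temporal trace). The corner condition follows by writing $\ptial{3}g_1 = \rv{\ptial{3}(D\ptial{2}u)}_G$ and $\ptial{2}g_2 = \rv{\ptial{2}(D\ptial{3}u)}_S$ and equating the mixed derivative $D\ptial{2}\ptial{3}u$ along the edge $\Bar{G}\cap\Bar{S}$, which is legitimate since $q>5$ makes the relevant traces continuous. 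Thus the substance is sufficiency, for which the plan is a two-stage reduction: first remove the initial and boundary data by subtracting a corrector, then solve the residual pure-forcing problem by even reflection across both faces.

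In the first stage I construct a corrector $u_* \in \W{1}(J;\Lq{q}(\OM))\cap\Lq{q}(J;\W{2}(\OM))$ satisfying
\[ \rv{u_*}_{t=0}=u_0, \quad \rv{D\ptial{2}u_*}_G=g_1,\quad \rv{D\ptial{3}u_*}_S=g_2. \]
The existence of such $u_*$ in the correct regularity class is exactly the content of the compatibility conditions (4)--(6): (4) is the initial regularity, (5) matches the two Neumann traces to the initial datum at $t=0$, and (6) matches the two boundary data at the corner. Setting $v:=u-u_*$, the residual $v$ then solves
\[ \pt v - D\Delta v = \tilde f:=f-(\pt u_*-D\Delta u_*) \in \Lq{q}(J;\Lq{q}(\OM)), \]
with $\rv{v}_{t=0}=0$ and the homogeneous Neumann conditions $D\ptial{2}v=0$ on $G$, $D\ptial{3}v=0$ on $S$.

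In the second stage I extend $\tilde f$ from $\OM=\bbr\times\bbr_+\times\bbr_+$ to $\bbr^3$ by even reflection in $x_2$ and, separately, in $x_3$; since $\tilde f\in\Lq{q}(J;\Lq{q}(\OM))$ and even reflection is bounded on $\Lq{q}$, the extension $\bar f$ lies in $\Lq{q}(J;\Lq{q}(\bbr^3))$ with no loss of regularity. The full-space Cauchy problem $\pt w-D\Delta w=\bar f$, $\rv{w}_{t=0}=0$, admits a unique solution $w\in\W{1}(J;\Lq{q}(\bbr^3))\cap\Lq{q}(J;\W{2}(\bbr^3))$ by the classical maximal $\Lq{q}$-regularity of the heat equation (e.g. Pr\"uss--Simonett). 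As $\bar f$ is even in $x_2$ and even in $x_3$, uniqueness forces $w$ to inherit both symmetries, so that $\ptial{2}w=0$ on $G$ and $\ptial{3}w=0$ on $S$; hence $\rv{w}_{\OM}$ solves the reduced problem and $u:=u_*+\rv{w}_{\OM}$ is the desired solution. Uniqueness for \eqref{HeatEqs: quarter} rests on the same principle: the even reflection of a solution with vanishing data is a full-space solution with zero forcing and zero initial value, hence vanishes.

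The only genuinely delicate point, and the one I expect to be the main obstacle, is the construction of the corrector $u_*$ at the corner $\Bar{G}\cap\Bar{S}$. Away from the edge one can lift $g_1$ and $g_2$ independently by two half-space Neumann extensions, but these lifts fail to patch together near the edge unless their derivatives are coordinated; the corner compatibility $\rv{\ptial{3}g_1}_{\Bar{G}\cap\Bar{S}}=\rv{\ptial{2}g_2}_{\Bar{G}\cap\Bar{S}}$ is precisely the obstruction that allows one to glue them into a single $u_*$ of class $\W{1}(J;\Lq{q})\cap\Lq{q}(J;\W{2})$ (equivalently, it ensures that the extended boundary data stay in the trace spaces $\W{1-\frac{1}{q}}$ and $\W{2-\frac{1}{q}}$ up to the edge). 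This is the heat-equation analogue of the corner-compatibility analysis in Pr\"uss--Simonett and Wilke, and since $q>5$ the traces involved are continuous, so both the gluing and the symmetry arguments above are justified.
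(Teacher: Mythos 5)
Your necessity argument and the second stage of your sufficiency argument (even reflection of the residual forcing in both $x_2$ and $x_3$, full-space maximal $\Lq{q}$-regularity, and the symmetry/uniqueness argument giving $\ptial{2}w=0$ on $G$ and $\ptial{3}w=0$ on $S$) are sound and close in spirit to the paper. The genuine gap is your first stage. You assert that a corrector $u_*\in\W{1}(J;\Lq{q}(\OM))\cap\Lq{q}(J;\W{2}(\OM))$ with $\rv{u_*}_{t=0}=u_0$, $\rv{D\ptial{2}u_*}_G=g_1$ and $\rv{D\ptial{3}u_*}_S=g_2$ exists, and that this is ``exactly the content'' of conditions (4)--(6). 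That claim is a simultaneous trace--extension theorem for the anisotropic maximal regularity class on a domain with an edge; it is of essentially the same depth as Theorem \ref{Theorem: MP-Heat-quarterspace} itself and cannot be quoted from the half-space literature. Your own sketch of how to obtain it (lift $g_1$ and $g_2$ by two independent half-space Neumann extensions and ``glue'' them near $\Bar{G}\cap\Bar{S}$ using condition (6)) is exactly the step you never carry out, and it is where the entire difficulty of the corner sits: each lift perturbs the Neumann trace on the \emph{opposite} face, and controlling that interference is not a consequence of the corner condition alone. As written, the proposal defers the whole content of the theorem to an undischarged lemma.

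The paper closes precisely this gap by ordering the reductions so that no standalone gluing is needed. It extends $(f,g_1,u_0)$ in the $x_3$-direction and solves a single half-space Neumann problem on $\{x_2>0\}$, which disposes of $u_0$, $f$ and $g_1$ simultaneously; the residual datum on $S$ then has vanishing time trace (from condition (5)) and vanishing $\ptial{2}$-derivative at the edge (from condition (6)), is extended by even reflection in $x_2$, and the resulting second half-space solution is even in $x_2$ by uniqueness, so its Neumann trace on $G$ vanishes identically and does not destroy the condition already achieved there. The symmetry-by-uniqueness step is the device that replaces your corner gluing. I recommend restructuring the proof along these sequential lines, or equivalently proving your corrector lemma by the same two successive half-space solves, rather than treating the existence of $u_*$ as given.
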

\begin{proof}
	To prove Theorem \ref{Theorem: MP-Heat-quarterspace}, we extend the data suitably to the half-space and solve the heat equations in half space. To this end, one extends $ (f, g_1, u_0) $ with respect to $ x_3 $ by general extension to some functions 
	\begin{gather*}
		\tf \in \Lq{q}(J; \Lq{q}(\bbr \times \bbr_+ \times \bbr)), \quad
		\tu_0 \in \W{2 - \frac{2}{q}}(\bbr \times \bbr_+ \times \bbr), \\
		\tg_1 \in \W{\onehalf - \frac{1}{2q}}(J; \Lq{q}((\bbr \times \{0\} \times \bbr))) \cap \Lq{q}(J; \W{1 - \frac{1}{q}}((\bbr \times \{0\} \times \bbr))).
	\end{gather*}
	Solving
	\begin{equation*}
		\begin{alignedat}{3}
			\pt u - D \Delta u & = \tf, && \tin \bbr \times \bbr_+ \times \bbr \times J, \\
			D \ptial{2} u & = \tg_1, && \tin \bbr \times \{0\} \times \bbr \times J, \\
			u(0) & = \tu_0, && \tin \bbr \times \bbr_+ \times \bbr,
		\end{alignedat}
	\end{equation*}
	yields a unique solution
	\begin{equation*}
		\tu \in \W{1}(J; \Lq{q}(\bbr \times \bbr_+ \times \bbr)) \cap \Lq{q}(J; \W{2}(\bbr \times \bbr_+ \times \bbr)),
	\end{equation*}
	by \cite[Theorem 6.2.5]{PS2016}. Then $ (u - \tu) $ solves \eqref{HeatEqs: quarter} with $ (f, g_1, u_0) = 0 $ and modified $ g_2 $ (not to be relabeled) satisfying $ \rv{\ptial{2} g_2}_S = 0 $ with vanishing time trace at $ t = 0 $. Now extend $ g_2 $ by even reflection with respect to $ x_2 $ to functions
	\begin{gather*}
		\tg_2 \in \W{\onehalf - \frac{1}{2q}}(J; \Lq{q}(\bbr^2 \times \{0\})) \cap \Lq{q}(J; \W{1 - \frac{1}{q}}(\bbr^2 \times \{0\})).
	\end{gather*}
	We solve
	\begin{equation*}
		\begin{alignedat}{3}
			\pt u - D \Delta u & = 0, && \tin \bbr \times \bbr \times \bbr_+ \times J, \\
			D \ptial{3} u & = \tg_2, && \tin \bbr \times \bbr \times \{0\} \times J, \\
			u(0) & = 0, && \tin \bbr \times \bbr \times \bbr_+,
		\end{alignedat}
	\end{equation*}
	again by \cite[Theorem 6.2.5]{PS2016} to obtain a unique solution
	\begin{equation*}
		\hatu \in \W{1}(J; \Lq{q}(\bbr^2 \times \bbr_+)) \cap \Lq{q}(J; \W{2}(\bbr^2 \times \bbr_+)).
	\end{equation*}
	Note that $ \hatu(t, x_1, - x_2, x_3) $ is also a solution of above system. Then uniqueness of solutions implies that
	\begin{equation*}
		\hatu(t, x_1, x_2, x_3) = \hatu(t, x_1, - x_2, x_3) 
	\end{equation*}
	and hence $ \ptial{2} \hatu(t, x_1, 0, x_3) = 0 $.
	Therefore, the restricted $ u := \tu + \hatu $ is the desired solution to $ \eqref{HeatEqs: quarter} $. The uniqueness can be derived by the argument in Section \ref{Section: Stokes-quarter}, which completes the proof.
\end{proof}

\subsection{The heat equations in bent quarter-spaces} \label{Section: Heat-bent-quarter}
By the same argument in Section \ref{Section: Stokes-Bent-Quarter}, we can find a unique solution to \eqref{HeatEqs: quarter} with a perturbation of $ x_3 = 0 $. Hence in turn, one obtains a solution operator $ S_{HQS} : \bbf_{HQS} \rightarrow \bbe_{HQS} $, where $ \bbf_{HQS} $ and $ \bbe_{HQS} $ are the solution space and data space, respectively, for the bent quarter-space and the data in $ \bbf_{QS} $ satisfy the compatibility conditions at the contact line $ \{ (x_1, 0, \theta(x_1)): x_1 \in \bbr \} $.

\section{General bounded cylindrical domains}
\label{Section: cylindrical domain}
In this section, we will show that \eqref{Linearized TwoPhase}, \eqref{Linearized Heat_f} and \eqref{Linearized Heat_s} respectively admits a unique solution. For system \eqref{Linearized Heat_f} and \eqref{Linearized Heat_s}, a localization procedure do work since they are both standard parabolic equations, while some modifications are needed for \eqref{Linearized TwoPhase}. The point is that for \eqref{Linearized TwoPhase}, the presence of pressure and the divergence equation will bring additional difficulties, for which we proceed as in \cite{PS2016} and \cite{Wilke2020} to obtain extra regularity for the pressure under suitable conditions and reduce the system to a simpler form.

\subsection{Regularity of the pressure}
\label{Subsection: Regularity pressure}
In general, the pressure $ \pi $ has no more regularity than that stated in Theorem \ref{Theorem: Cylinder-TwoPhase}.
However, in a special situation, one may obtain extra time-regularity of $ \pi $.
\begin{proposition}\label{Proposition: pressure regularity}
	Let $ \OM $ be the bounded domain defined in Theorem \ref{Theorem: Cylinder-TwoPhase} and $ (u, \pi) \in \bbe(J) $ be a solution of the two-phase problem \eqref{Linearized TwoPhase} with 
	\begin{equation*}
		(u_0, f_d, g_1 \cdot \nuSigma, g_5 \cdot \nu_S) = 0
	\end{equation*}
	and $ f_u \in \WO{\alpha}(J; \Lq{q}(\OM)^3) $, for some $ \alpha \in (0, 1/2 - 1/2q) $.
	Then $ \pi \in \WO{\alpha}(J; \Lq{q}(\OM)) $ and there is a constant $ C > 0 $ independent of $ J $, $ u $, $ \pi $, $ f_u $, such that
	\begin{equation*}
		\norm{\pi}_{\W{\alpha}(J; \Lq{q}(\OM))}
		\leq C \left( \norm{u}_{\bbe_u}
		+ \norm{\pi}_{\bbe_\pi}
		+ \norm{f_u}_{\W{\alpha}(J; \Lq{q}(\OM)^3)}
		\right).
	\end{equation*} 
\end{proposition}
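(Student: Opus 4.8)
The plan is to fix the time variable and to recognise $\pi(t)$, for almost every $t\in J$, as the solution of an elliptic transmission problem whose source and boundary data are built only from $f_u$ and from spatial traces of $u$ and $\nabla u$; since the solution operator of this problem is bounded, linear and independent of $t$, the extra time regularity then passes from the data to $\pi$. The reduced data $(u_0, f_d, g_1\cdot\nuSigma, g_5\cdot\nu_S)=0$ are exactly what is needed to remove the inertial term $\pt u$ — which only lies in $\Lq{q}(J;\Lq{q})$ and carries no time regularity — from this data.

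First I would derive the elliptic problem. Because $f_d=0$ the velocity is genuinely solenoidal, $\Div u=0$, whence $\Div\pt u=0$; taking the distributional divergence of the momentum balance $\rho\pt u-\mu\Delta u+\nabla\pi=f_u$ and using that $\rho,\mu$ are constant in each phase annihilates every contribution of $u$ and leaves
\begin{equation*}
	\Delta\pi=\Div f_u\tin \OM\backslash\Sigma.
\end{equation*}
The accompanying conditions are read off from \eqref{Linearized TwoPhase}: the normal-stress condition on $G\backslash\partial\Sigma$ gives the Dirichlet datum $\pi=2\mu\ptial{2}u_2-g_4$, the normal part of $\jump{\Smu(u,\pi)}\nuSigma=g_2$ furnishes the jump $\jump{\pi}$ across $\Sigma$ in terms of $g_2\cdot\nuSigma$ and of $\jump{\mu\,\ptial{\nuSigma}u\cdot\nuSigma}$, and the normal component of the momentum equation on $S$ and on $\Sigma$ yields the Neumann datum and the flux jump. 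This is precisely an elliptic/Laplace transmission problem in $\OM$ with a Dirichlet condition on $G$, a transmission condition on $\Sigma$ and a Neumann condition on $S$, of the kind whose unique solvability and bounded solution operator I may take from Appendix~\ref{AppSection: Auxiliary transmission problems}.

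The decisive step is to show that none of the flux data forces the bad time regularity of $\pt u$ into the problem. Writing the weak formulation and testing against $\nabla\phi$ with $\phi$ vanishing on $G$, the volume term $(\rho\pt u\mid\nabla\phi)$ is integrated by parts: the bulk part vanishes by $\Div\pt u=0$, the contribution on $S$ vanishes because $\pt u\cdot\nu_S=\pt(g_5\cdot\nu_S)=0$, and on $\Sigma$ the identity $\jump{u\cdot\nuSigma}=g_1\cdot\nuSigma=0$ collapses the interface term to one proportional to the density jump $\jump{\rho}$ and to the common normal trace of $u$. The remaining $\Delta u$-type flux data on $S$ and $\Sigma$ I would rewrite through tangential derivatives of the prescribed velocity traces together with the constraint $\Div u=0$, so that they are expressed by $\nabla u$ restricted to the boundary rather than by $\Delta u$; by the trace theorem, and by the embedding $\bbe_u\hookrightarrow\WO{\alpha}(J;\W{2(1-\alpha)}(\OM\backslash\Sigma))$, such quantities — like $g_2$, $g_4$ and $\ptial{2}u_2|_G$ — lie in $\WO{\onehalf-\frac{1}{2q}}(J;\Lq{q})\cap\Lq{q}(J;\W{1-\frac{1}{q}})$, which embeds into $\WO{\alpha}(J;\Lq{q})$ exactly because $\alpha<\onehalf-\frac{1}{2q}$.

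It then remains to transfer the time regularity. Since the solution operator $\mathcal S$ of the transmission problem is bounded, linear and time-independent, it commutes with the Sobolev–Slobodeckij difference quotients in $t$; applying it to the data, using $f_u\in\WO{\alpha}(J;\Lq{q})$ (so that $\Div f_u\in\WO{\alpha}(J;\dW{-1})$) together with the trace regularity of the boundary and interface data just described, yields $\pi\in\WO{\alpha}(J;\Lq{q}(\OM))$ and the asserted estimate; here $\norm{\pi}_{\bbe_\pi}$ enters only to anchor the additive pressure constant, passing from the homogeneous bound on $\nabla\pi$ to the full $\Lq{q}(\OM)$ norm, while the vanishing time trace follows from $u_0=0$ and the vanishing traces of the data. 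I expect the main obstacle to be precisely the control of the inertial and $\Delta u$-flux contributions on the interface: one must confirm that the density-jump term and the second-order boundary data can genuinely be re-expressed through boundary traces of $u$ and $\nabla u$ and estimated in $\WO{\alpha}(J;\cdot)$ by $\norm{u}_{\bbe_u}+\norm{\pi}_{\bbe_\pi}$, which is the sole role played by the compatibility hypotheses $f_d=0$, $g_1\cdot\nuSigma=0$ and $g_5\cdot\nu_S=0$.
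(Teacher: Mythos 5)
Your overall strategy is close in spirit to the paper's, but as written it has two concrete gaps, both located exactly where you predicted the ``main obstacle'' would be. The paper does not formulate a boundary value problem for $\pi$ at all; it argues by duality: for arbitrary $\psi\in\Lq{q'}(\OM)$ it solves the \emph{adjoint} transmission problem \eqref{EllipticEqs: transmission-Dirichlet} strongly (Theorem \ref{AppendixTheorem: Laplace}), with the \emph{weighted} conditions $\jump{\rho\phi}=0$ on $\Sigma$ and $\rho\phi=0$ on $G$, pairs $\pi$ with $\psi=\Delta\phi$, and integrates by parts twice. The weighting by $\rho$ is not cosmetic: it is what makes the inertial contribution vanish identically, since $\int_\OM\rho\,\pt u\cdot\nabla\phi\,\d x$ produces the interface term $\int_\Sigma\jump{\rho\phi\,(\pt u\cdot\nuSigma)}\,\d\sigma$, which is zero only because $\jump{\rho\phi}=0$ \emph{and} $\jump{\pt u\cdot\nuSigma}=0$. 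With your test functions (only $\phi|_G=0$) you are left, as you yourself note, with a term proportional to $\jump{\rho}\,\phi\,(\pt u\cdot\nuSigma)$ on $\Sigma$ --- and this term is fatal: $\pt u$ lies only in $\Lq{q}(J;\Lq{q}(\OM))$ and has no trace on $\Sigma$, and rewriting it as $\jump{\rho}\,\pt(u\cdot\nuSigma)$ only yields a distribution of negative time regularity, which cannot be absorbed into a $\W{\alpha}$ bound. The argument therefore collapses unless the adjoint problem carries the $\rho$-weighted transmission condition.

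Second, the direct elliptic problem you set up for $\pi$ is not covered by the solvability theory available in the paper. The source $\Div f_u$ with $f_u(t)\in\Lq{q}$ is only $\dW{-1}$ data, and your Neumann/flux data on $S$ and $\Sigma$ involve $\Delta u\cdot\nu$, which likewise has no pointwise trace; Theorems \ref{AppendixTheorem: Elliptic} and \ref{AppendixTheorem: Laplace} produce only strong $\W{2+s}$ solutions from $\W{s}$ data, $s\in\{0,1\}$, not the very weak $\Lq{q}$-solutions your scheme would require. Your proposed repair --- rewriting $\Delta u\cdot\nu$ through tangential derivatives of prescribed traces and $\Div u=0$ --- does not go through on $\Sigma$, where only the jump $\jump{u}=g_1$ is prescribed and the one-sided traces of $u$ are unknowns. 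The paper's duality computation sidesteps both issues: one more integration by parts turns $\int_\OM\Delta u\cdot\nabla\phi\,\d x$ into boundary integrals of $\ptial{\nu}u\cdot\nabla\phi$ (first-order traces only, which lie in $\WO{\onehalf-\frac{1}{2q}}(J;\Lq{q})$ by the mixed-derivative embedding of $\bbe_u$) plus $\int_\OM\nabla u:\nabla^2\phi\,\d x$. Taking the supremum over $\psi$ and then repeating the estimate for differences $\pi(t)-\pi(s)$ gives the $\W{\alpha}$ bound for every $\alpha<\onehalf-\frac{1}{2q}$; note also that $\norm{\pi}_{\bbe_\pi}$ enters through the terms $\norm{\jump{\pi(t)}}_{\Lq{q}(\Sigma)}$ and $\norm{\pi(t)}_{\Lq{q}(G)}$ in this estimate, not merely to fix an additive constant.
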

\begin{proof}
	Fix any $ \psi \in \Lq{q'}(\OM) $. By Theorem \ref{AppendixTheorem: Laplace}, the transmission problem
	\begin{equation} \label{EllipticEqs: transmission-Dirichlet}
		\begin{alignedat}{3}
			\Delta \phi & = \psi, && \tin \OM \backslash \Sigma, \\
			\jump{\rho \phi} & = 0, && \ton \Sigma, \\
			\jump{\ptial{\nuSigma} \phi} & = 0, && \ton \Sigma, \\
			\rho \phi & = 0, && \ton G \backslash \partial \Sigma, \\
			\ptial{\nu_S}\phi & = 0, && \ton S
		\end{alignedat}
	\end{equation}
	possesses a unique solution $ \phi \in W_{q'}^1 (\OM) \cap W_{q'}^2 (\OM \backslash \Sigma) $, which satisfies the estimate
	\begin{equation*}
		\norm{\phi}_{W_{q'}^1 (\OM) \cap W_{q'}^2 (\OM \backslash \Sigma)}
		\leq C \norm{\psi}_{\Lqa{q'}(\OM)}.
	\end{equation*}
	By integration by parts, one obtains
	\begin{align*}
		\int_\OM \pi \psi \d x
		& = \int_\OM \pi \Delta \phi \d x
		= - \int_\Sigma \jump{\pi} \ptial{\nuSigma} \phi \d \sigma
		+ \int_G \pi \ptial{\nu_G} \phi \d \sigma 
		- \int_\OM \nabla \pi \cdot \nabla \phi \d x \\
		& 
		= - \int_\Sigma \jump{\pi} \ptial{\nuSigma} \phi \d \Sigma	
		+ \int_G \pi \ptial{\nu_G} \phi \d \sigma 
		- \int_\OM (f_u - \rho \pt u + \nu \Delta u) \cdot \nabla \phi \d x \\
		& 
		= - \int_\Sigma \left( \jump{\pi} \ptial{\nuSigma} \phi + \jump{\mu \ptial{\nuSigma} u \cdot \nabla \phi} \right) \d \sigma	
		+ \int_G \pi \ptial{\nu_G} \phi \d \sigma 
		- \int_\OM f_u \cdot \nabla \phi \d x \\
		& \quad  
		- \int_S \mu \ptial{\nu_S} u \cdot \nabla \phi \d \sigma 
		- \int_G \mu \ptial{\nu_G} u \cdot \nabla \phi \d \sigma 
		+ \int_\OM \mu \nabla u : \nabla^2 \phi \d x.
	\end{align*}
	Taking the supremum of the left-hand side over all functions $ \psi \in \Lq{q'}(\OM) $, we have
	\begin{align*}
		\norm{\pi(t)}_{\Lq{q}(\OM)}
		& \leq C \left(
		\norm{\nabla u(t)}_{\Lq{q}(\OM)}
		+ \norm{\ptial{\nu_S} u(t)}_{\Lq{q}(S)}
		+ \norm{\ptial{\nu_G} u(t)}_{\Lq{q}(G)} \right. \\
		& \left. \qquad 
		+ \norm{\ptial{\nuSigma} u(t)}_{\Lq{q}(\Sigma)} 
		+ \norm{\jump{\pi(t)}}_{\Lq{q}(\Sigma)} 
		+ \norm{\pi(t)}_{\Lq{q}(G)}
		+ \norm{f_u(t)}_{\Lq{q}(\OM)}
		\right),
	\end{align*}
	for almost all $ t \in J $. Furthermore,
	\begin{align*}
		& \norm{\pi(t) - \pi(s)}_{\Lq{q}(\OM)} \\
		& \leq C \left(
		\norm{\nabla (u(t) - u(s))}_{\Lq{q}(\OM)}
		+ \norm{\ptial{\nu_S} (u(t) - u(s))}_{\Lq{q}(S)}
		+ \norm{\ptial{\nu_G} (u(t) - u(s))}_{\Lq{q}(G)} \right. \\
		& \left. \qquad
		+ \norm{\ptial{\nuSigma} (u(t) - u(s))}_{\Lq{q}(\Sigma)}
		+ \norm{\jump{\pi(t) - \pi(s)}}_{\Lq{q}(\Sigma)} 
		+ \norm{(\pi(t) - \pi(s))}_{\Lq{q}(G)} \right. \\
		& \left. \qquad
		+ \norm{f_u(t) - f_u(s)}_{\Lq{q}(\OM)}
		\right),
	\end{align*}
	for almost all $ t \in J $.
	Since $ \bbeo_u \hookrightarrow \WO{\onehalf}(J; \W{1}(\OM \backslash \Sigma)) $, trace theory implies that
	\begin{align*}
		& \rv{(\ptial{k} u_l)}_K \in \WO{\onehalf - \frac{1}{2q}}(J; \Lq{q}(K)), 
	\end{align*}
	for $ k,l \in \{1,2,3\} $ and $ K \in \{\Sigma, G, S\} $. From the embedding $ \W{s} \hookrightarrow \W{s - \epsilon} $, $ \epsilon > 0 $, one gets $ \pi \in \WO{\alpha}(J; \Lq{q}(\OM)) $ and the desired estimate.
%
\end{proof}
\begin{remark}
	Note that for unbounded domains, e.g., the (bent) quarter space and the half space with a (bent) interface, one can always reduce the unbounded domain to a bounded domain with the same part of boundary by suitable extension. Therefore, in the following localization procedure as in Section \ref{Subsection: Localization procedure}, we will apply Proposition \ref{Subsection: Regularity pressure} directly without any further explanations.
\end{remark}
\subsection{Reductions} \label{Subsection: Reduction-general}
In this section, we reduce the problem \eqref{Linearized TwoPhase} to the case 
\begin{equation*}
	(u_0, f_u, f_d, g_1 \cdot \nuSigma, g_5 \cdot \nu_S) = 0.
\end{equation*}
To this end, consider the parabolic transmission problem 
\begin{equation} \label{Reduction: Parabolic transmission}
	\begin{alignedat}{3}
		\rho \pt \baru - \mu \Delta \baru & = f_u, && \tin \OM \backslash \Sigma \times J, \\
		\jump{\baru} & = g_1, && \tin \Sigma \times J, \\
		\jump{\mu (\nabla \baru + \tran{\nabla} \baru) \nuSigma} & = g_2, && \ton \Sigma \times J, \\
		\tran{(\baru_1, \baru_3)} & = g_3, && \ton G \backslash \partial \Sigma \times J, \\
		2 \mu \ptial{2} \baru_2 & = g_4, && \ton G \backslash \partial \Sigma \times J, \\
		\baru & = g_5, && \ton S \times J, \\
		\baru(0) & = u_0, && \tin \OM \backslash \Sigma,
	\end{alignedat} 
\end{equation}
with $ (u_0, f_u, g_j) $, $ j \in \{1,...,5\} $, in suitable function spaces. Then by Theorem \eqref{AppendixTheorem: Parabolic}, \eqref{Reduction: Parabolic transmission} admits a unique solution
\begin{gather*}
	\baru \in \W{1}(J; \Lq{q}(\OM)^3) \cap \Lq{q}(J; \W{2}(\OM \backslash \Sigma)^3),
\end{gather*}
which implies that $ (u - \baru, \pi) $ sovles \eqref{Linearized TwoPhase} with $ (u_0, f_u, g_1, g_3, g_5) = 0 $. Next, solving the elliptic transmission problem
\begin{equation} \label{Reduction: Elliptic transmission}
	\begin{alignedat}{3}
		\Delta \phi & = f_d - \Div \baru, && \tin \OM \backslash \Sigma, \\
		\jump{\rho \phi} & = 0, && \ton \Sigma, \\
		\jump{\ptial{\nuSigma} \phi} & = 0, && \ton \Sigma, \\
		\rho \phi & = 0, && \ton G \backslash \partial \Sigma, \\
		\ptial{\nu_S}\phi & = 0, && \ton S
	\end{alignedat}
\end{equation}
yields a unique solution $ \phi $ with regularity
\begin{gather*}
	\nabla \phi \in \WO{1}(J; \Lq{q}(\OM)^3) \cap \Lq{q}(J; \W{2}(\OM \backslash \Sigma)^3),
\end{gather*}
thanks to Theorem \ref{AppendixTheorem: Laplace}.
Now for $ (u, \pi) $ the solution of \eqref{Linearized TwoPhase}, define
\begin{align*}
	(\tu, \tpi) :=
	(u - \baru - \nabla \phi, \pi + \rho \pt \phi - \mu \Delta \phi).
\end{align*}
It is clear that $ (\tu, \tpi) $ solves \eqref{Linearized TwoPhase} with $ (u_0, f_u, f_d, g_1 \cdot \nuSigma, g_5 \cdot \nu_S) = 0 $ and other modified data in right regularity classes having vanishing time traces at $ t = 0 $.

\subsection{Localization procedure} \label{Subsection: Localization procedure}
Firstly, we are devoted to prove Theorem \ref{Theorem: Cylinder-TwoPhase}, which is the most crucial part of this work. The proof is based on the localization procedure with the model problems we considered in Section \ref{Section: Model problems}.

\begin{proof}[\textbf{Proof of Theorem \ref{Theorem: Cylinder-TwoPhase}}]
	Following the localization method in e.g. \cite{DHP2003}, \cite{PS2016} and \cite{Wilke2020}, we split the proof into two parts.
	
	\textbf{(1). Existence of a left inverse.} Let $ (u, \pi) $ be a solution of \eqref{Linearized TwoPhase}. By the reduction argument in Section \ref{Subsection: Reduction-general}, there exists $ (\baru, \barpi) $ such that $ (\tu, \tpi) := (u, \pi) - (\baru, \barpi) $ solves the problem
	\begin{equation} \label{TwoPhase: reduced}
		\begin{alignedat}{3}
			\rho \pt \tu - \mu \Delta \tu + \nabla \tpi & = 0, && \tin \OM \backslash \Sigma \times J, \\
			\Div \tu & = 0, && \tin \OM \backslash \Sigma \times J, \\
			\jump{\tu} & = \tg_1, && \ton \Sigma \times J, \\
			\jump{- \tpi \bbi + \mu (\nabla \tu + \tran{\nabla} \tu)} \nuSigma & = \tg_2, && \ton \Sigma \times J, \\
			\tran{(\tu_1, \tu_3)} & = \tg_3, && \ton G \backslash \partial \Sigma \times J, \\
			- \tpi + 2 \mu \ptial{2} \tu_2 & = \tg_4, && \ton G \backslash \partial \Sigma \times J, \\
			\tu & = \tg_5, && \ton S, \times J \\
			\tu(0) & = 0, && \tin \OM \backslash \Sigma.
		\end{alignedat} 
	\end{equation}
	with $ (\tg_1 \cdot \nuSigma, \tg_5 \cdot \nu_S) = 0 $. Choose open sets $ U_k = B_r(x_k) $ with radius $ r > 0 $ and centers $ x_k $ such that 
	$ \partial \Sigma \subset \bigcup_{k = 5}^{N_1} U_k $, $ \partial S \subset \bigcup_{k = N_1 + 1}^{N} U_k $ and choose $ r > 0 $ small enough such that corresponding solution operators from Section \ref{Section: Stokes-Bent-Quarter} and \ref{Section: TwoPhase-Bent-halfspace} are well-defined. By Proposition \ref{AppendixPropositon: POU-Omega}, there exist open and connected sets such that 
	\begin{itemize}
		\item $ U_0 \cap \Bar{\OM} \neq \emptyset $, $ U_0 \cap \Bar{G} = \emptyset $,
		\item $ U_i \cap G_1 \neq \emptyset $, $ U_j \cap G_2 \neq \emptyset $, $ U_k \cap (\Sigma \cup S) = \emptyset $, $ i = 1, 2 $, $ j = 3, 4 $, $ k = i,j $, 
	\end{itemize}
	and a family of functions $ \{ \vp_k \}_{k = 0}^N \subset C_c^3(\bbr^3; [0,1]) $ such that $ \Bar{\OM} \subset \bigcup_{k = 0}^N U_k $, $ \supp \vp_k \subset U_k $, $ \sum_{k = 0}^{N} \vp_k = 1 $ and $ \ptial{\nuSigma} \vp_k(x) = 0 $, $ \ptial{\nu_S} \vp_k(x) = 0 $, for $ x \in U_k \cap (\partial \Sigma \cup \partial S) $, $ k \geq 5 $. 
	
	Now define $ (\tu_k, \tpi_k) := (\tu, \tpi) \vp_k $, the data $ \tg_{jk} := \tg_j \vp_k $, as well as the domain $ \OM^k := \OM \cap U_k $, where $ U_k $ is an open set if $ k = 0 $, a half space if $ k = 1,...,4 $, a bent quarter space if $ k = N_1 + 1,...,N $, a half space with a bent interface if $ k = 5,...,N_1 $. Moreover, let $ \Sigma^k := \Sigma \cap U_k $, $ G^k := G \cap U_k $ and $ S^k := S \cap U_k $. Then $ (\tu_k, \tpi_k) $ satisfies the problem
	\begin{equation} \label{TwoPhase: Localization}
		\begin{alignedat}{3}
			\rho \pt \tu_k - \mu \Delta \tu_k + \nabla \tpi_k & = F_k(\tu, \tpi), && \tin \OM^k \backslash \Sigma^k \times J, \\
			\Div \tu_k & = F_{dk}(\tu), && \tin \OM^k \backslash \Sigma^k \times J, \\
			\jump{\tu_k} & = \tg_{1k}, && \ton \Sigma^k \times J, \\
			\jump{- \tpi_k \bbi + \mu (\nabla \tu_k + \tran{\nabla} \tu_k)} \nu_{\Sigma^k} & = \tg_{2k} + G_{2k}(\tu), && \ton \Sigma^k \times J, \\
			\tran{((\tu_k)_1, (\tu_k)_3)} & = \tg_{3k}, && \ton G^k \backslash \partial \Sigma^k \times J, \\
			- \tpi_k + 2 \mu \ptial{2} (\tu_k)_2 & = \tg_{4k} + G_{4k}(\tu), && \ton G^k \backslash \partial \Sigma^k \times J, \\
			\tu_k & = \tg_{5k}, && \ton S^k \times J, \\
			\tu_k(0) & = 0, && \tin \OM^k \backslash \Sigma^k,
		\end{alignedat} 
	\end{equation}
	where
	\begin{gather*}
		F_k(\tu, \tpi) := [\nabla, \vp_k] \tpi - \mu [\Delta, \vp_k] \tu, \quad
		F_{dk}(\tu) := \tu \cdot \nabla \vp_k, \\
		G_{2k}(\tu) := \jump{\mu(\nabla \vp_k \otimes \tu + \tu \otimes \nabla \vp_k)} \nu_{\Sigma^k}, \quad
		G_{4k}(\tu) := 2 \mu \ptial{2} \vp_k \tu_2.
	\end{gather*}
	For $ k = 0 $, we extend $ \Sigma $ to $ \Tilde{\Sigma} $ and respectively $ S $ to $ \Tilde{S} $ smoothly, such that 
	\begin{equation*}
		 \Tilde{\Sigma} \cap U_0 = \Sigma \cap U_0, \quad 
		 \Tilde{\OM} \cap U_0 = \OM \cap U_0, \quad
		 \Tilde{\Sigma} \subset \Tilde{\OM}.
	\end{equation*}
	Then solving a two-phase Stokes problem in a bounded domain with smooth boundary and interface yields the unique solution of this local problem by \cite[Theorem 8.1.4]{PS2016}. If $ k = 1,...,4 $, we have the half-space Stokes problem with outflow boundary conditions. This problem was investigated in \cite[Section 6]{BP2007} and \cite[Section 7.2]{PS2016}. For $ k = 5, ..., N_1 $ and $ k = N_1 + 1, ..., N  $, we obtain a bent quarter-space Stokes problem and respectively, a half-space two-phase Stokes problem with a bent interface, which are solvable according to Section \ref{Section: Stokes-Bent-Quarter} and \ref{Section: TwoPhase-Bent-halfspace}. Hence, the solution operators for the charts $ U_k $, $ k \geq 5 $, are well-defined by the results in Section \ref{Section: Stokes-Bent-Quarter} and \ref{Section: TwoPhase-Bent-halfspace}. We denote the corresponding solution operators for each chart by $ \cS_k $.

	Next, we want to reduce $ F_{dk}(\tu) $, since we do not have enough time regularity for it, while $ F_k $, $ G_{2k} $ and $ G_{4k} $ are endowed with extra time regularity due to the regularity of $ \tu $ and Proposition \ref{Proposition: pressure regularity}. More specifically, if $ \tu \in \bbeo_u(J) \hookrightarrow \WO{1/2}(J; \W{1}(\OM^k)^3) $, then 
	\begin{equation*}
		[\Delta, \vp_k] \tu \in \WO{1/2}(J; \Lq{q}(\OM^k)^3) \cap \Lq{q}(J; \W{1}(\OM^k \backslash \Sigma^k)^3).
	\end{equation*}
	From Proposition \ref{Proposition: pressure regularity}, we know that 
	\begin{equation*} 
		\tpi \in  \WO{\alpha}(J; \Lq{q}(\OM^k)) \cap \Lq{q}(J; \W{1}(\OM^k \backslash \Sigma^k)),
	\end{equation*}
	and hence 
	\begin{equation*}
		[\nabla, \vp_k] \tpi \in  \WO{\alpha}(J; \Lq{q}(\OM^k)^3) \cap \Lq{q}(J; \W{1}(\OM^k \backslash \Sigma^k)^3),
	\end{equation*}
	for $ 0 < \alpha < 1/2 - 1/2q $. Then
	\begin{equation*}
		F_k(\tu, \tpi) \in  \WO{\alpha}(J; \Lq{q}(\OM^k)) \cap \Lq{q}(J; \W{1}(\OM^k \backslash \Sigma^k)),
	\end{equation*}
	as well as
	\begin{gather*}
		G_{2k}(\tu) \in  \WO{1 - 1/2q}(J; \Lq{q}(\Sigma^k)^3) \cap \WO{1/2}(J; \W{1 - 1/q}(\Sigma^k)^3), \\
		G_{4k}(\tu) \in  \WO{1 - 1/2q}(J; \Lq{q}(G^k)) \cap \WO{1/2}(J; \W{1 - 1/q}(G^k \backslash \partial \Sigma^k)), 
	\end{gather*}
	by similar argument and the trace theory. However, $ F_{dk} \in \bbeo_u(J) $ is an exception without enough regularity. Thus, we reduce it by solving an auxiliary elliptic transmission problem
	\begin{equation} \label{LocalizationReduction: Elliptic transmission}
		\begin{alignedat}{3}
			\Delta \phi_k & = F_{dk}(\tu), && \tin \OM^k \backslash \Sigma^k, \\
			\jump{\rho \phi_k} & = 0, && \ton \Sigma^k, \\
			\jump{\ptial{\nuSigma} \phi_k} & = 0, && \ton \Sigma^k, \\
			\rho \phi_k & = 0, && \ton G^k \backslash \partial \Sigma^k, \\
			\ptial{\nu_S} \phi_k & = 0, && \ton S^k,
		\end{alignedat}
	\end{equation}
	with Theorem \ref{Appendix: Elliptic transmission} to obtain a unique solution $ \phi_k $ with regularity
	\begin{align*}
		& \nabla \phi_k \in \bbeo_\phi(J) := \WO{1}(J; \W{1}(\OM^k \backslash \Sigma^k)^3) \cap \Lq{q}(J; \W{3}(\OM^k \backslash \Sigma^k)^3) \\
		& \qquad \qquad \qquad \hookrightarrow \WO{1}(J; \W{1}(\OM^k \backslash \Sigma^k)^3) \cap \WO{1/2}(J; \W{2}(\OM^k \backslash \Sigma^k)^3),
	\end{align*}
	and the estimate
	\begin{equation*}
		\norm{\nabla \phi_k}_{\bbeo_\phi(J)}
		\leq C_N \norm{\tu}_{\bbeo_u(J)},
	\end{equation*}
	where $ C_N $ depends on $ N $, $ \OM $ and $ \Sigma $ but does not depend on $ J $. Moreover, trace theory yields that 
	\begin{gather*}
		\jump{\nabla \phi_k} \in \WO{1 - 1/2q}(J; \W{1}(\Sigma^k)^3) \cap \WO{1/2}(J; \W{2 - 1/q}(\Sigma^k)^3), \\
		\rv{\nabla \phi_k}_{G^k} \in \WO{1 - 1/2q}(J; \W{1}(G^k \backslash \partial \Sigma^k)^3) \cap \WO{1/2}(J; \W{2 - 1/q}(G^k \backslash \partial \Sigma^k)^3), \\
		\rv{\nabla \phi_k}_{S^k} \in \WO{1 - 1/2q}(J; \W{1}(S^k)^3) \cap \WO{1/2}(J; \W{2 - 1/q}(S^k)^3),
	\end{gather*}
	with vanishing normal part on $ \Sigma^k $ and $ S^k $ and 
	\begin{gather*}
		\jump{\mu \nabla^2 \phi_k} \in \WO{1 - 1/2q}(J; \Lq{q}(\Sigma^k)^{3 \times 3}) \cap \WO{1/2}(J; \W{1 - 1/q}(\Sigma^k)^{3 \times 3}), \\
		\rv{\mu \nabla^2 \phi_k}_{S^k} \in \WO{1 - 1/2q}(J; \Lq{q}(S^k)^{3 \times 3}) \cap \WO{1/2}(J; \W{1 - 1/q}(S^k)^{3 \times 3}),
	\end{gather*}
	whenever they exist. Define
	\begin{equation*}
		(\hatu_k, \hpi_k) := (\tu_k - \nabla \phi_k, \tpi_k + \rho \pt \phi_k - \mu \Delta \phi_k).
	\end{equation*}
	Then we arrive at the system
	\begin{equation} \label{TwoPhase: Localization-reduced}
		\begin{alignedat}{3}
			\rho \pt \hatu_k - \mu \Delta \hatu_k + \nabla \hpi_k & = F_k(\tu, \tpi), && \tin \OM^k \backslash \Sigma^k \times J, \\
			\Div \hatu_k & = 0, && \tin \OM^k \backslash \Sigma^k \times J, \\
			\jump{\hatu_k} & = \tg_{1k} - \jump{\nabla \phi_k}, && \ton \Sigma^k \times J, \\
			\jump{- \hpi_k \bbi + \mu (\nabla \hatu_k + \tran{\nabla} \hatu_k)} \nu_{\Sigma^k} & = \tg_{2k} + \Hat{G}_{2k}(\tu), && \ton \Sigma^k \times J, \\
			\tran{((\hatu_k)_1, (\hatu_k)_3)} & = \tg_{3k} - \tran{(\ptial{1} \phi_k, \ptial{3} \phi_k)}, && \ton G^k \backslash \partial \Sigma^k \times J, \\
			- \hpi_k + 2 \mu \ptial{2} (\hatu_k)_2 & = \tg_{4k} + \Hat{G}_{4k}(\tu), && \ton G^k \backslash \partial \Sigma^k \times J, \\
			\hatu_k & = \tg_{5k} - \nabla \phi_k, && \ton S^k \times J, \\
			\hatu_k(0) & = 0, && \tin \OM^k \backslash \Sigma^k,
		\end{alignedat} 
	\end{equation}
	where
	\begin{align*}
		\Hat{G}_{2k}(\tu) & := G_{2k}(\tu) - \jump{2 \mu \nabla^2 \phi_k} \nu_{\Sigma^k} + \jump{\mu \Delta \phi_k} \nu_{\Sigma^k}, \\
		\Hat{G}_{4k}(\tu) & := G_{4k}(\tu) - 2 \mu \ptial{2}^2 \phi_k + \mu \Delta \phi_k. 
	\end{align*}
	By the solution operators $ \cS_k $, one may rewrite \eqref{TwoPhase: Localization-reduced} as
	\begin{equation} \label{Abstract: twophase}
		(\hatu_k, \hpi_k) = \cS_k (\cD_k + \cR_k(\tu, \tpi)),
	\end{equation}
	where 
	\begin{equation*}
		\cD_k 
		:= \tran{(0, 0, \tg_{1k}, \tg_{2k}, \tg_{3k}, \tg_{4k}, \tg_{5k})}
	\end{equation*}denotes the given data and 
	\begin{equation*}
		\cR_k(\tu, \tpi)
		:= \tran{(F_k(\tu, \tpi), 0, - \jump{\nabla \phi_k}, \Hat{G}_{2k}(\tu), - (\ptial{1} \phi_k, \ptial{3} \phi_k), \Hat{G}_{4k}(\tu), - \nabla \phi_k)}
	\end{equation*}
	represents the remaining part on the right-hand side of \eqref{TwoPhase: Localization-reduced}. 
	
	Define a cutoff function $ \{ \eta_k \}_{k = 0}^N \subset C_c^\infty(U_k) $ such that $ \rv{\eta_k}_{\supp \vp_k} = 0 $. Then $ \tu = \sum_{k = 0}^N \tu_k \eta_k $, $ \tpi = \sum_{k = 0}^N \tpi_k \eta_k $. Multiplying \eqref{Abstract: twophase} by $ \eta_k $, replacing $ (\hatu_k, \hpi_k) \eta_k $ by $ (\tu_k - \nabla \phi_k, \tpi_k + \rho \pt \phi_k - \mu \Delta \phi_k) \eta_k $ and rearranging the equation, one obtains
	\begin{equation*} 
		(\tu_k, \tpi_k) \eta_k = \cS_k (\cD_k \eta_k + \cR_k(\tu, \tpi) \eta_k) + (\nabla \phi_k, - \rho \pt \phi_k + \mu \Delta \phi_k) \eta_k.
	\end{equation*}
	Now recalling the function spaces in Section \ref{Subsection: Linearized system-Lagrangian}, we claim that there exist a $ \delta > 0 $ and a constant $ C $ independent of $ T > 0 $, such that for $ \alpha > 1/2q $,
	\begin{align}
		\norm{\cR_k(\tu, \tpi) \eta_k}_{\bbfo(J)}
		& \leq C T^\delta \norm{(\tu, \tpi)}_{\bbeo(J)}, \label{Eqs: R_k}\\
		\norm{\nabla \phi_k \eta_k}_{\bbeo_u(J)}
		& \leq C T^\delta \left( \norm{(\tu, \tpi)}_{\bbeo(J)} + \norm{\cD_k}_{\bbfo(J)} \right), \label{Eqs: nabla phi_k} \\
		\norm{(- \rho \pt \phi_k + \mu \Delta \phi_k) \eta_k}_{\bbeo_\pi(J)}
		& \leq C T^\delta \left( \norm{(\tu, \tpi)}_{\bbeo(J)} + \norm{\cD_k}_{\bbfo(J)} \right). \label{Eqs: partial_t phi_k}
	\end{align}
	It is easy to verify \eqref{Eqs: R_k}, since we have given associated additional time regularity for the remaining terms. We present the estimate for $ F_k(\tu, \tpi) $ as an example. For $ \alpha > 1/2q $, we have
	\begin{align*}
		\norm{\eta_k F_k(\tu, \tpi)}_{\bbf_1(J)}
		\leq T^{\frac{1}{2q}} \norm{F_k(\tu, \tpi)}_{\Lq{2q}(J; \Lq{q}(\OM^k)^3)} 
		\leq C T^{\frac{1}{2q}} \norm{F_k(\tu, \tpi)}_{\WO{\alpha}(J; \Lq{q}(\OM^k)^3)}.
	\end{align*}
	By Proposition \ref{Proposition: pressure regularity}, we get
	\begin{align*}
		& \norm{(- \rho \pt \phi_k + \mu \Delta \phi_k) \eta_k}_{\WO{\alpha}(J; \Lq{q}(\OM^k))}
		\leq \norm{\tpi \eta_k - \hpi_k}_{\WO{\alpha}(J; \Lq{q}(\OM^k))} \\
		& \qquad \qquad \qquad \qquad \qquad
		\leq C \left( \norm{(\tu, \tpi)}_{\bbeo(J)} + \norm{\cD_k}_{\bbfo(J)} + \norm{\cR_k(\tu, \tpi) \eta_k}_{\bbfo(J)} \right).
	\end{align*}
	Since $ \nabla \phi_k \in \WO{1}(J; \W{1}(\OM^k \backslash \Sigma^k)^3) $, one has
	\begin{align*}
		& \pt \phi_k \in \WO{\alpha}(J; \Lq{q}(\OM^k)) \cap \Lq{q}(J; \W{2}(\OM^k \backslash \Sigma^k)) 
		\hookrightarrow \WO{\alpha - \epsilon}(J; \W{1}(\OM^k \backslash \Sigma^k)),
	\end{align*}
	for $ \epsilon > 0 $ small. Then
	\begin{align*}
		& \norm{\nabla \phi_k \eta_k}_{\bbeo_u(J)}
		\leq C \norm{\nabla \phi_k}_{\Lq{q}(J; \W{2}(\OM^k \backslash \Sigma^k)^3)}
		+ \norm{\phi_k}_{\WO{1}(J; \W{1}(\OM^k \backslash \Sigma^k)^3)} \\
		& \leq C T^\frac{1}{2q} \norm{\nabla \phi_k}_{\WO{\onehalf}(J; \W{2}(\OM^k \backslash \Sigma^k)^3)}
		+ \norm{\phi_k}_{\Lq{q}(J; \W{1}(\OM^k \backslash \Sigma^k))}
		+ \norm{\pt \phi_k}_{\Lq{q}(J; \W{1}(\OM^k \backslash \Sigma^k))} \\
		& \qquad \leq C T^\frac{1}{2q} \norm{\nabla \phi_k}_{\WO{\onehalf}(J; \W{2}(\OM^k \backslash \Sigma^k)^3)}
		+ C T^\frac{1}{2q} \norm{\pt \phi_k}_{\WO{\alpha - \epsilon}(J; \W{1}(\OM^k \backslash \Sigma^k))} \\
		& \qquad
		\leq C T^\frac{1}{2q} \left( \norm{(\tu, \tpi)}_{\bbeo(J)} + \norm{\cD_k}_{\bbfo(J)} \right),
	\end{align*}
	and 
	\begin{align*}
		& \norm{(- \rho \pt \phi_k + \mu \Delta \phi_k) \eta_k}_{\bbeo_\pi(J)} \\
		& \qquad 
		\leq C T^\frac{1}{2q} \norm{\pt \phi_k}_{\WO{\alpha - \epsilon}(J; \dW{1}(\OM^k \backslash \Sigma^k))}
		+ C T^\frac{1}{2q} \norm{\nabla \phi_k}_{\WO{\onehalf}(J; \W{2}(\OM^k \backslash \Sigma^k)^3)} \\
		&  \qquad \qquad 
		+ C T^\frac{1}{2q} \norm{\jump{\Delta \phi_k}}_{\WO{\onehalf}(J; \W{1 - \frac{1}{q}}(\Sigma^k))}
		+ C T^\onehalf \norm{\jump{\Delta \phi_k}}_{\WO{1 - \frac{1}{2q}}(J; \Lq{q}(\Sigma^k))} \\
		&  \qquad \qquad 
		+ C T^\frac{1}{2q} \norm{\rv{\Delta \phi_k}_{S^k}}_{\WO{\onehalf}(J; \W{1 - \frac{1}{q}}(\Sigma^k))}
		+ C T^\onehalf \norm{\rv{\Delta \phi_k}_{S^k}}_{\WO{1 - \frac{1}{2q}}(J; \Lq{q}(\Sigma^k))} \\
		& \qquad \leq C T^\frac{1}{2q} \left( \norm{(\tu, \tpi)}_{\bbeo(J)} + \norm{\cD_k}_{\bbfo(J)} \right),
	\end{align*}
	where we used
	\begin{align*}
		& \norm{\pt \phi_k}_{\Lq{q}(J; \dW{1}(\OM^k \backslash \Sigma^k))}
		\leq T^{\frac{1}{2q}} \norm{\pt \phi_k}_{\Lq{2q}(J; \dW{1}(\OM^k \backslash \Sigma^k))} \\
		& \qquad \qquad
		\leq C T^{\frac{1}{2q}} \norm{\pt \phi_k}_{\W{\frac{\alpha}{2} - \frac{\epsilon}{2}}(J; \dW{1}(\OM^k \backslash \Sigma^k))} 
		\leq C T^{\frac{1}{2q}} \norm{\pt \phi_k}_{\W{\alpha - \epsilon}(J; \dW{1}(\OM^k \backslash \Sigma^k))}, 
	\end{align*}
	for some $ 1/2q < \alpha < 1/2 - 1/2q $ and $ \epsilon $ small enough. Then we get \eqref{Eqs: nabla phi_k} and \eqref{Eqs: partial_t phi_k}. Consequently, there is a constant $ \delta > 0 $, such that
	\begin{equation*}
		\norm{(\tu_k, \tpi_k) \eta_k}_{\bbeo(J)}
		\leq C \left( \norm{\cD_k}_{\bbfo(J)} 
		+ T^\delta \norm{(\tu, \tpi)}_{\bbeo(J)} \right),
	\end{equation*}
	where $ C > 0 $ does not depend on $ T > 0 $. Taking the sum over all charts $ k = 0,...,N $, one obtains 
	\begin{equation*}
		\norm{(\tu, \tpi)}_{\bbeo(J)}
		\leq C \left( \norm{\cD}_{\bbfo(J)} 
		+ T^\delta \norm{(\tu, \tpi)}_{\bbeo(J)} \right),
	\end{equation*}
	where $ \cD $ denotes the data in \eqref{TwoPhase: reduced}. Then choosing $ T > 0 $ sufficiently small yields the \textit{a priori} estimate
	\begin{equation*}
		\norm{(\tu, \tpi)}_{\bbeo(J)}
		\leq C \norm{\cD}_{\bbfo(J)}.
	\end{equation*}
	Hence we may conclude that the operator $ \cL : \bbeo \rightarrow \bbfo $ defined by the left-hand side of \eqref{TwoPhase: reduced} is injective and has closed range and there is a left inverse $ \cS $ for $ \cL $ such that $ \cS \cL z = z $ for all $ z \in \bbeo(J) $.
	
	\textbf{(2). Existence of a right inverse.}
	Now we are in the position to prove the existence of a right inverse. Given data 
	\begin{equation*}
		F := \tran{(f_u, f_d, g_1, g_2, g_3, g_4, g_5)} \in \bbf(J), \quad u_0 \in X_{\gamma, u},
	\end{equation*}
	satisfying the compatibility conditions \eqref{Compatibility: TwoPhase-initial-cylinder} and \eqref{Compatibility: TwoPhase-contact-cylinder}. As stated in Section \ref{Subsection: Reduction-general}, without loss of generality one may assume that $ u_0 = 0 $, which means that the time traces of all the data at $ t = 0 $ vanish whenever they exist. 
	
	Let $ \baru, \nabla \phi $ be the unique solution of \eqref{Reduction: Parabolic transmission} and \eqref{Reduction: Elliptic transmission}, respectively. Define
	\begin{equation*}
		(\tu, \tpi) := (\baru - \nabla \phi, - \pt \phi + \mu \Delta \phi), \quad 
		\Tilde{\cS} F := (\tu, \tpi).
	\end{equation*}
	Then it follows that
	\begin{equation*}
		\cL \Tilde{\cS} F = 
		\tran{\left(f_u, f_d, g_1 + G_1(\phi), g_2 + G_2(\phi), g_3 + G_3(\phi), g_4 + G_4(\phi), g_5 + G_5(\phi), 0\right)},
	\end{equation*}
	where 
	\begin{gather*}
		G_1(\phi) := - \jump{\nabla \phi}, \quad 
		G_2(\phi) := - \jump{2 \mu \nabla^2 \phi} \nuSigma + \jump{\mu \Delta \phi} \nuSigma, \\
		G_3(\phi) := \tran{(- \ptial{1} \phi, - \ptial{3} \phi)}, \quad 
		G_4(\phi) := - 2 \mu \ptial{2}^2 \phi + \mu \Delta \phi, \quad 
		G_5(\phi) := - \nabla \phi.
	\end{gather*}
	
	In the following, by localizaiton we consider the problem
	\begin{equation} \label{TwoPhase: Localization-right}
		\begin{alignedat}{3}
			\rho \pt \tu_k - \mu \Delta \tu_k + \nabla \tpi_k & = 0, && \tin \OM^k \backslash \Sigma^k \times J, \\
			\Div \tu_k & = 0, && \tin \OM^k \backslash \Sigma^k \times J, \\
			\jump{\tu_k} & = G_{1k}(\phi), && \ton \Sigma^k \times J, \\
			\jump{- \tpi_k \bbi + \mu (\nabla \tu_k + \tran{\nabla} \tu_k)} \nu_{\Sigma^k} & = G_{2k}(\phi), && \ton \Sigma^k \times J, \\
			\tran{((\tu_k)_1, (\tu_k)_3)} & = G_{3k}(\phi), && \ton G^k \backslash \partial \Sigma^k \times J, \\
			- \tpi_k + 2 \mu \ptial{2} (\tu_k)_2 & = G_{4k}(\phi), && \ton G^k \backslash \partial \Sigma^k \times J, \\
			\tu_k & = G_{5k}(\phi), && \ton S^k \times J, \\
			\tu_k(0) & = 0, && \tin \OM^k \backslash \Sigma^k,
		\end{alignedat} 
	\end{equation}
	where $ G_{jk}(\phi) := G_{j}(\phi) \vp_k $, $ j \in \{1,...,5\} $. Now, we are going to check if $ G_{jk}(\phi) $ satisfy all the relevant compatibility condition at $ \partial \Sigma^k $ and $ \partial S^k $, whenever they exist. For $ k = 0, ...,4 $, one does not need compatibility condition for $ G_{jk} $, $ j = 1,2,5 $. Since $ \phi $ has vanishing trace at $ t = 0 $, one obtains $ \rv{(G_{3k}, G_{4k})}_{t = 0} = 0 $. For $ k = 5, ...,N_1 $, i.e., the bent quarter-space Stokes problem, it is obviously that $ G_{3k} = \tran{((G_{5k})_1, (G_{5k})_3)} $, which fulfills the compatibility condition on the contact line $ \partial S^k $. For $ k = N_1, ..., N $, namely, the half-space two-phase Stokes problem with a bent interface, we have $ \jump{G_{3k}} = \tran{((G_{1k})_1, (G_{1k})_3)} $ and 
	\begin{gather*}
		\begin{aligned}
			(G_{2k})_1 & = \jump{2 \mu \ptial{1} (G_{3k})_1 \nu_{\Sigma^k} \cdot e_1 + \mu (\ptial{1} (G_{3k})_2 + \ptial{3} (G_{3k})_1) \nu_{\Sigma^k} \cdot e_3} \\
				& \quad + \jump{G_{4k}} \nu_{\Sigma^k} \cdot e_1 + \jump{2 \mu (\ptial{1} (G_{3k})_1 + \ptial{3} (G_{3k})_2)} \nu_{\Sigma^k} \cdot e_1,
		\end{aligned} \\
		\begin{aligned}
			(G_{2k})_3 & = \jump{2 \mu \ptial{3} (G_{3k})_2 \nu_{\Sigma^k} \cdot e_3 + \mu (\ptial{1} (G_{3k})_2 + \ptial{3} (G_{3k})_1) \nu_{\Sigma^k} \cdot e_1} \\
				& \quad + \jump{G_{4k}} \nu_{\Sigma^k} \cdot e_3 + \jump{2 \mu (\ptial{1} (G_{3k})_1 + \ptial{3} (G_{3k})_2)} \nu_{\Sigma^k} \cdot e_3,
		\end{aligned}
	\end{gather*}
	at the contact line $ \partial \Sigma^k $, which verifies the compatibility condition \eqref{Compatibility:two-phase-bent}.
	
	Therefore, according to the model problems we established in Section \ref{Section: Model problems}, for each $ k \in \{0,1,...,N\} $, there exists a unique solution $ (\tu_k, \tpi_k) $ of \eqref{TwoPhase: Localization-right} in right regularity class. Define a cutoff function $ \{ \eta_k \}_{k = 0}^N \subset C_c^\infty(U_k) $ such that $ \rv{\eta_k}_{\supp \vp_k} = 0 $. Solving the elliptic transmission problem
	\begin{equation} \label{Elliptic transmission: right inverse}
		\begin{alignedat}{3}
			\Delta \phi_k & = \rv{(\tu_k \cdot \nabla \eta_k)}_\OM, && \tin \OM \backslash \Sigma, \\
			\jump{\rho \phi_k} & = 0, && \ton \Sigma, \\
			\jump{\ptial{\nuSigma} \phi_k} & = 0, && \ton \Sigma, \\
			\rho \phi_k & = 0, && \ton G \backslash \partial \Sigma, \\
			\ptial{\nu_S} \phi_k & = 0, && \ton S,
		\end{alignedat}
	\end{equation}
	yields a unique solution $ \phi_k $ with regularity
	\begin{equation*}
		\nabla \phi_k \in \WO{1}(J; \W{1}(\OM \backslash \Sigma)^3) \cap \Lq{q}(J; \W{3}(\OM \backslash \Sigma)^3).
	\end{equation*}
	Finally, define
	\begin{equation*}
		\Bar{\cS} F := \sum_{k = 0}^N (\tu_k \eta_k - \nabla \phi_k, \tpi_k \eta_k - \rho \pt \phi_k + \mu \Delta \phi_k).
	\end{equation*}
	Then it holds that
	\begin{equation*}
		\cL \Bar{\cS} F = \sum_{k = 0}^N
		\left(
			\begin{gathered}
				- \mu [\Delta, \eta_k] \tu_k + [\nabla, \eta_k] \tu_k \\
				0 \\
				\eta_k G_{1k}(\phi) + G_{1k}(\phi_k) \\
				\eta_k G_{2k}(\phi) + \Hat{G}_{2k}(\phi_k) \\
				\eta_k G_{3k}(\phi) + G_{3k}(\phi_k) \\
				\eta_k G_{4k}(\phi) + \Hat{G}_{4k}(\phi_k) \\
				\eta_k G_{5k}(\phi) + G_{5k}(\phi_k) \\
				0
			\end{gathered}
		\right),
	\end{equation*}
	where
	\begin{align*}
		\Hat{G}_{2k}(\phi_k) & := G_{2k}(\phi_k) - \jump{\mu(\nabla \eta_k \otimes \tu_k + \tu_k \otimes \nabla \eta_k)} \nu_{\Sigma^k}, \\
		\Hat{G}_{4k}(\phi_k) & := G_{4k}(\phi_k) - 2 \mu \ptial{2} \phi_k (\tu_k)_2. 
	\end{align*}
	From the construction of $ \eta_k $, we know that 
	\begin{equation*}
		G_j(\phi) = \sum_{k = 0}^N \eta_k G_{jk} (\phi).
	\end{equation*}
	Let $ \Hat{\cS} F := \Tilde{\cS} F - \Bar{\cS} F $, it follows that
	\begin{equation*}
		\cL \Hat{\cS} F = \cL \Tilde{\cS} F - \cL \Bar{\cS} F = F - \cR F,
	\end{equation*}
	where
	\begin{equation*}
		\cR F := \sum_{k = 0}^N 
		\tran{\left(- \mu [\Delta, \eta_k] \tu_k + [\nabla, \eta_k] \tu_k, 0, G_{1k}, \Hat{G}_{2k},	G_{3k}, \Hat{G}_{4k}, G_{5k}, 0 \right)}.
	\end{equation*}
	As in the first part of the proof, since we have additional time-regularity for $ \tu_k, \tpi_k $ and $ \phi $, we could conclude that there exists constant $ \delta > 0 $ such that
	\begin{equation*}
		\norm{\cR F}_{\bbfo(J)} \leq C T^\delta \norm{F}_{\bbfo(J)},
	\end{equation*}
	where $ C > 0 $ does not depend $ T > 0 $. Taking $ T > 0 $ small enough, for example, $ C T^\delta < 1/2 $, the operator $ (\bbi - \cR) $ is invertible. Substitute $ F $ above by $ (\bbi - \cR)^{-1} F $, one obtains
	\begin{equation*}
	\cL \Hat{\cS} (\bbi - \cR)^{-1} F = F,
	\end{equation*}
	which defines the right inverse for $ \cL $ as $ \cS := \Hat{\cS}(\bbi - \cR)^{-1} $.
	
	This completes the proof.
\end{proof}

Given $ c_s \in \Lq{q}(J; \W{2}(\Os)) \cap \W{1}(J; \Lq{q}(\Os)) $, we obtain the well-posedness of 
\begin{equation} \label{Linearized TwoPhase-c_s}
	\begin{alignedat}{3}
		\rho \pt u - \Div \Smu(u, \pi) & = f_u, && \tin \OM \backslash \Sigma \times J, \\
		\Div u - \frac{\gamma \beta}{\rho_s} c_s & = f_d, && \tin \OM \backslash \Sigma \times J, \\
		\jump{u} & = g_1, && \ton \Sigma \times J, \\
		\jump{(- \pi \bbi + \mu (\nabla u + \tran{\nabla} u)) \nuSigma} & = g_2, && \ton \Sigma \times J, \\
		\tran{(u_1, u_3)} & = g_3, && \ton G \backslash \partial \Sigma \times J, \\
		- \pi + 2 \mu \ptial{2} u_2 & = g_4, && \ton G \backslash \partial \Sigma \times J, \\
		u & = g_5, && \ton S \times J, \\
		u(0) & = u_0, && \tin \OM \backslash \Sigma,
	\end{alignedat} 
\end{equation}
with the aid of Theorem \ref{Theorem: Cylinder-TwoPhase}.
\begin{corollary} \label{Corollary: linear part}
	Let $ \gamma, \beta > 0 $. Given $ c_s \in \Lq{q}(J; \W{2}(\Os)) \cap \W{1}(J; \Lq{q}(\Os)) $. Then under the assumptions of Theorem \ref{Theorem: Cylinder-TwoPhase}, \eqref{Linearized TwoPhase-c_s} admits a unique solution
	\begin{gather*}
		(u, \pi) \in \bbe(J),
	\end{gather*}
	if and only if the data are subject to the regularity and compatibility conditions in Theorem \ref{Theorem: Cylinder-TwoPhase}.
\end{corollary}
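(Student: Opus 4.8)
The plan is to reduce \eqref{Linearized TwoPhase-c_s} directly to the two-phase Stokes problem \eqref{Linearized TwoPhase} solved in Theorem \ref{Theorem: Cylinder-TwoPhase}. Since $c_s$ is given and fixed, the only discrepancy between the two systems is the additional source term $\frac{\gamma\beta}{\rho_s}c_s$ in the divergence equation, which by the structure of \eqref{Nonlinear: Lagrangian} is supported in $\Os$. Setting
\[
	\tilde f_d := f_d + \frac{\gamma\beta}{\rho_s} c_s \chi_{\Os},
\]
where $\chi_{\Os}$ denotes extension by zero to $\Of$, the system \eqref{Linearized TwoPhase-c_s} becomes precisely \eqref{Linearized TwoPhase} with $f_d$ replaced by $\tilde f_d$ and all remaining data unchanged. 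Everything thus reduces to checking that $\tilde f_d$ lies in the admissible class $\bbf_2(J) = \W{1}(J; \dW{-1}(\OM)) \cap \Lq{q}(J; \W{1}(\OM \backslash \Sigma))$ and that the compatibility conditions \eqref{Compatibility: TwoPhase-initial-cylinder}--\eqref{Compatibility: TwoPhase-contact-cylinder} are met with $f_d$ replaced by $\tilde f_d$.

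First I would verify the spatial regularity of the extra term. As $f_d \in \bbf_2(J)$ by assumption, it suffices to treat the contribution of $c_s$. The embedding $\Lq{q}(J; \W{2}(\Os)) \hookrightarrow \Lq{q}(J; \W{1}(\Os))$ together with the product structure $\W{1}(\OM \backslash \Sigma) = \W{1}(\Of) \times \W{1}(\Os)$ immediately gives $\frac{\gamma\beta}{\rho_s}c_s\chi_{\Os} \in \Lq{q}(J; \W{1}(\OM \backslash \Sigma))$, no continuity across $\Sigma$ being required. For the time regularity in $\dW{-1}(\OM) = [\dot{W}^1_{q',0}(\OM)]'$, I would use that on the bounded domain $\OM$ the Poincaré inequality yields the continuous embedding $\Lq{q}(\Os) \hookrightarrow \dW{-1}(\OM)$, realized through $\phi \mapsto \int_{\Os} (\,\cdot\,)\,\phi \,\d x$ for $\phi \in \dot{W}^1_{q',0}(\OM)$. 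Combined with $c_s \in \W{1}(J; \Lq{q}(\Os))$, this produces $\frac{\gamma\beta}{\rho_s}c_s\chi_{\Os} \in \W{1}(J; \dW{-1}(\OM))$, so that $\tilde f_d \in \bbf_2(J)$.

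Next I would record that the compatibility conditions transform transparently under this substitution. The initial identity $\Div u_0 = \rv{f_d}_{t=0}$ in \eqref{Compatibility: TwoPhase-initial-cylinder} becomes $\Div u_0 = \rv{\tilde f_d}_{t=0} = \rv{f_d}_{t=0} + \frac{\gamma\beta}{\rho_s}\rv{c_s}_{t=0}$, while the two contact-line identities in \eqref{Compatibility: TwoPhase-contact-cylinder} involving the jump $\jump{g_4 - 2\mu f_d}$ acquire the additional contribution $\jump{2\mu \frac{\gamma\beta}{\rho_s}c_s\chi_{\Os}}$ at $\partial\Sigma$; all other conditions are untouched since they do not involve $f_d$. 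With $\tilde f_d \in \bbf_2(J)$ and these translated conditions in force, Theorem \ref{Theorem: Cylinder-TwoPhase} applies verbatim and yields a unique solution $(u,\pi) \in \bbe(J)$ of \eqref{Linearized TwoPhase-c_s}. The necessity (``only if'') direction follows likewise from the necessity statement of Theorem \ref{Theorem: Cylinder-TwoPhase}, read back through the inverse substitution $\tilde f_d \mapsto \tilde f_d - \frac{\gamma\beta}{\rho_s}c_s\chi_{\Os}$.

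The only genuinely delicate point is the negative-order time regularity of the extra term, namely the claim $\frac{\gamma\beta}{\rho_s}c_s\chi_{\Os} \in \W{1}(J; \dW{-1}(\OM))$: this is where the hypothesis $c_s \in \W{1}(J; \Lq{q}(\Os))$ enters essentially, through the embedding $\Lq{q}(\Os) \hookrightarrow \dW{-1}(\OM)$, and where care is needed regarding the extension by zero across $\Sigma$ and the homogeneous versus inhomogeneous nature of $\dW{-1}$ on the bounded domain $\OM$. Everything else is a direct invocation of Theorem \ref{Theorem: Cylinder-TwoPhase}.
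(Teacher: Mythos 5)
Your reduction $\tilde f_d := f_d + \frac{\gamma\beta}{\rho_s}c_s\chi_{\Os}$ and the verification that $\tilde f_d\in\bbf_2(J)$ (via $\Lq{q}(\Os)\hookrightarrow\dW{-1}(\OM)$ on the bounded domain) are fine and coincide with the opening step of the paper's argument. The gap lies in the compatibility conditions. The corollary asserts solvability under the conditions \eqref{Compatibility: TwoPhase-initial-cylinder}--\eqref{Compatibility: TwoPhase-contact-cylinder} imposed on the \emph{given} data $(u_0,f_d,g_4,\dots)$, whereas after your substitution Theorem \ref{Theorem: Cylinder-TwoPhase} demands these conditions for $\tilde f_d$. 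The two sets of conditions differ by the generically nonzero quantities $\frac{\gamma\beta}{\rho_s}\rv{c_s}_{t=0}$ in the identity $\Div u_0=\rv{f_d}_{t=0}$, and $\jump{2\mu\tfrac{\gamma\beta}{\rho_s}c_s\chi_{\Os}}=2\mu_s\tfrac{\gamma\beta}{\rho_s}c_s$ at $\partial\Sigma$ in the two contact-line identities (only the solid side contributes to the jump of the zero extension). You identify exactly these extra terms, but then declare the ``translated conditions in force'' and invoke Theorem \ref{Theorem: Cylinder-TwoPhase} --- which proves a different statement: solvability when the data are compatible relative to $\tilde f_d$, not relative to $f_d$. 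Nothing in the hypotheses supplies the translated conditions, so the argument as written does not establish the corollary.

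Removing this mismatch is precisely the content of the paper's proof, which arranges for the $c_s$-contribution to enter through data that satisfy the compatibility conditions of Theorem \ref{Theorem: Cylinder-TwoPhase} on their own. Concretely, the paper (i) splits $c_s=\tilde c_s+\bar c_s$ with $\rv{\tilde c_s}_{t=0}=c_{0,s}$ and $\bar c_s$ having vanishing time trace, which handles the condition at $t=0$, and (ii) extends $\bar c_s$ to the fluid side with the weight $\mu_s/\mu_f$, producing a function $\hc$ with $\jump{\mu\hc}=0$ on $\Sigma$; the auxiliary two-phase Stokes problem with divergence data $\frac{\gamma\beta}{\rho_s}\hc$ and all remaining data zero then satisfies \eqref{Compatibility: TwoPhase-contact-cylinder} because the offending term is exactly $\jump{2\mu f_d}$ with $f_d=\frac{\gamma\beta}{\rho_s}\hc$, which vanishes by construction. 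The solution of \eqref{Linearized TwoPhase-c_s} is obtained by superposition of this auxiliary solution with the solution for the shifted data. Your zero extension $c_s\chi_{\Os}$ cannot play this role, since $\jump{\mu\,c_s\chi_{\Os}}=\mu_s c_s\neq 0$ on $\Sigma$ in general. To repair your proof you must either insert the trace-splitting and the $\mu_s/\mu_f$-weighted extension, or explicitly restate the corollary with the compatibility conditions phrased in terms of $f_d+\frac{\gamma\beta}{\rho_s}c_s\chi_{\Os}$ --- but the latter changes the statement being proved.
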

\begin{proof}
	Since $ c_s \in \bbe_{c,s}(J) = \Lq{q}(J; \W{2}(\Os)) \cap \W{1}(J; \Lq{q}(\Os)) $, it follows from the embeddings $ \W{2}(\Os) \hookrightarrow \W{1}(\Os), 
	\Lq{q}(\Os) \hookrightarrow \W{-1}(\Os) $
	that $ c_s \in \bbf_2(J) $.
	For $ c_{0,s} \in \W{2 - 2/q}(\Os) $, trace method of real interpolation implies that there exists a function $ \tilde{c}_s \in \bbe_{c,s}(J) $ such that $ \rv{\tilde{c}_s}_{t = 0} = c_{0,s} $. Then one may decompose $ c_s $ as $ c_s := \tilde{c}_s + \bar{c}_s $, where $ \bar{c}_s \in \bbeo_{c,s}(J) $. Extend $ \bar{c}_s $ suitably from $ \bbeo_{c,s}(J) $ to $ \bar{c} \in \Lq{q}(J; \W{2}(\OM \backslash \Sigma)) \cap \WO{1}(J; \Lq{q}(\OM)) $. By compatibility conditions \eqref{Compatibility: TwoPhase-initial-cylinder} and \eqref{Compatibility: TwoPhase-contact-cylinder}, we define a function $ \hc $ such that 
	\begin{equation*}
		 \hc = \left\{
		 	\begin{aligned}
			 	& \rv{\bar{c}}_{\Os} = \bar{c}_s, && \text{ in } \Os, \\
			 	& \frac{\mu_s}{\mu_f} \rv{\bar{c}}_{\Of}, && \text{ in } \Of.
		 	\end{aligned}
		 \right.
	\end{equation*}
	Then $ \hc \in \bbfo_2(J) $ and $ \jump{\mu \hc} = 0 $ on $ \Sigma $. Consequently, the problem
	\begin{alignat*}{3}
		\rho \pt \baru - \Div \Smu(\baru, \barpi) & = 0, && \tin \OM \backslash \Sigma \times J, \\
		\Div \baru & = \frac{\gamma \beta}{\rho_s} \hc, && \tin \OM \backslash \Sigma \times J, \\
		\jump{\baru} & = 0, && \ton \Sigma \times J, \\
		\jump{- \barpi \bbi + \mu (\nabla \baru + \tran{\nabla} \baru)} \nuSigma & = 0, && \ton \Sigma \times J, \\
		\tran{(\baru_1, \baru_3)} & = 0, && \ton G \backslash \partial \Sigma \times J, \\
		- \barpi + 2 \mu \ptial{2} \baru_2 & = 0, && \ton G \backslash \partial \Sigma \times J, \\
		\baru & = 0, && \ton S \times J, \\
		\baru(0) & = 0, && \tin \OM \backslash \Sigma.
	\end{alignat*}
	admits a unique solution $ (\baru, \barpi) \in \bbeo(J) $ thanks to Theorem \ref{Theorem: Cylinder-TwoPhase}. Let $ (\tu, \tpi) $ be the unique solution of \eqref{Linearized TwoPhase} with $ f_d $ substituted by $ f_d + \frac{\gamma \beta}{\rho_s} \tilde{c}_s $, then $ (\baru + \tu, \barpi + \tpi) $ solves \eqref{Linearized TwoPhase-c_s}.
\end{proof}

By standard localization procedure, one can prove the local well-posedness of the heat equation in a cylindrical domain and in a cylindrical ring respectively. We omit the proof of Theorem \ref{Theorem: Cylinder-Heat_f} and \ref{Theorem: Cylinder-Heat_s} here and refer to Wilke \cite[Section 5.3.2]{Wilke2020} for the similar argument with model problems constructed in Section \ref{Section: Heat-bent-quarter} and Pr\"uss--Simonett \cite[Section 6.2]{PS2016}.

\section{Nonlinear well-posedness}
\label{Section: Nonlinear well-posedness}
In this section, we aim to prove the local well-posedness of \eqref{Nonlinear: Lagrangian}, namely, to prove Theorem \ref{Theorem: main}.
To this end, we firstly introduce the function spaces for the data
\begin{gather*}
	\bbf_1(J) := \Lq{q}(J; \Lq{q}(\OM)^3), \quad
	\bbf_2(J) := \W{1}(J; \dW{-1}(\OM)) \cap \Lq{q}(J; \W{1}(\OM \backslash \Sigma)), \\
	\bbf_3(J) := \W{\onehalf - \frac{1}{2q}}(J; \Lq{q}(\Sigma)^3) \cap \Lq{q}(J; \W{1 - \frac{1}{q}}(\Sigma)^3), \\
	\bbf_4(J) := \W{1 - \frac{1}{2q}}(J; \Lq{q}(G)^3) \cap \Lq{q}(J; \W{2 - \frac{1}{q}}(G \backslash \partial \Sigma)^3), \\
	\bbf_5(J) := \W{\onehalf - \frac{1}{2q}}(J; \Lq{q}(G)) \cap \Lq{q}(J; \W{1 - \frac{1}{q}}(G \backslash \partial \Sigma)), \\
	\bbf_6(J) := \Lq{q}(J; \Lq{q}(\Of)), \quad
	\bbf_7(J) := \W{\onehalf - \frac{1}{2q}}(J; \Lq{q}(\Sigma)) \cap \Lq{q}(J; \W{1 - \frac{1}{q}}(\Sigma)), \\
	\bbf_8(J) := \W{\onehalf - \frac{1}{2q}}(J; \Lq{q}(G)) \cap \Lq{q}(J; \W{1 - \frac{1}{q}}(G \backslash \partial \Sigma)),\\
	\bbf_9(J) := \W{\onehalf - \frac{1}{2q}}(J; \Lq{q}(S)) \cap \Lq{q}(J; \W{1 - \frac{1}{q}}(S)), \\
	\bbf_{10}(J) := \Lq{q}(J; \W{1}(\Os)), \quad
	\bbf_{11}(J) := \Lq{q}(J; \W{1}(\Os)), \quad
	\bbf(J) := \Pi_{j = 1}^{11} \bbf_j(J).
\end{gather*}
Let $ \bw = (\hv, \hpi, \hc, \hcss, \hg) $ and $ \bw_0 = (\vo, \co, 0, 1) $. Recalling the definition of solution and initial spaces in Section \ref{Subsection: equivalent Lagrangian}, we reformulate \eqref{Nonlinear: Lagrangian} in the abstract form
\begin{equation}
	\label{abstract}
	\sL (\bw) = \sN(\bw, \wo) \quad \textrm{for all}\ \bw \in \bbe(J),\ (\vo, \co) \in X_\gamma,
\end{equation}
where
\begin{gather*}
	\sL (\bw) := 
	\left( 
		\begin{gathered}
			\hr \pt \hv - \hdiv \bS( \hv, \hpi ) \\
			\hdiv \left( \hv \right) - \frac{\gamma \beta}{\hrs} \hcs \\
			\jump{\bS( \hv, \hpi )} \hn_\Sigma \\
			\rv{\cP_G(\hv)}_G \\
			\rv{\bS( \hv, \hpi ) \hn_G \cdot \hn_G}_G \\
			\pt \hc - \hD \hDelta \hc \\
			\hD \hnab \hc \cdot \hn_\Sigma \\
			\hD \hnab \hc \cdot \hn_G \\
			\hDs \hnab \hcs \cdot \hn_S \\
			\pt \hcss - \beta \hcs \\
			\pt \hg - \frac{\gamma \beta}{n \hrs} \hcs \\
			\rv{(\hv, \hc, \hcss, \hg)}_{t = 0}
		\end{gathered}
	\right), \quad 
	\sN (\bw, \wo) := 
	\left( 
		\begin{gathered}
			\bK(\bw) \\
			G(\bw) \\
			\bH^1(\bw) \\
			\bH^2(\bw) \\
			H^3(\bw) \\
			F^1(\bw) \\
			F^2(\bw) \\
			F^3(\bw) \\
			F^4(\bw) \\
			F^5(\bw) \\
			F^6(\bw) \\
			\wo
		\end{gathered} 
	\right).
\end{gather*}
Define
\begin{equation*}
	\sM(\bw) := \left( \bK, G, \bH^1, \bH^2, H^3, F^1, F^2, F^3, F^4, F^5, F^6 \right)^\top (\bw).
\end{equation*}
Then it follows from Corollary \ref{Corollary: linear part} and Theorem \ref{Theorem: Cylinder-Heat_f}, \ref{Theorem: Cylinder-Heat_s} that $ \sL : \bbe(J) \rightarrow \bbf(J) \times X_\gamma $  is an \textit{isomorphism} for $ J := [0,T] $, $ T > 0 $. Moreover, the following proposition holds for $ \sM(\bw) : \bbe(J) \rightarrow \bbf(J) $. 
\begin{proposition}
	\label{Propostion: contraction}
	Let $ q > 3 $ , $ J = [0, T] $ and $ R > 0 $. Assume $ w = (\hv, \hpi, \hc, \hcss, \hg) \in \bbe(J) $ with $ \rv{\hg}_{t = 0} = 1 $ and $ \norm{\bw}_{\bbe(J)} \leq R $, then there exist a constant $ C = C(R) > 0 $, a finite time $ T_R > 0 $ depending on $ R $ and $ \delta > 0 $ such that for $ 0 < T < T_R $, $ \sM(\bw) : \bbe(J) \rightarrow \bbf(J) $ is well-defined and bounded along with the estimates:
	\begin{equation}
		\label{Mw}
		\norm{\sM(\bw)}_{\bbf(J)}
		\leq C(R) \TD \left( \norm{\bw}_{\bbe(J)} + 1 \right).
	\end{equation}
	Moreover, for $ \bw^1 = (\hv^1, \hpi^1, \hc^1, {{}\hcss}^1, \hg^1), \bw^2 = (\hv^2, \hpi^2, \hc^2, {{}\hcss}^2, \hg^2) \in \YT $, $ \rv{\hc^i}_{t = 0} = \co $, $ \rv{\hcss}_{t = 0} = 0 $, $ \rv{\hg^i}_{t = 0} = 1 $ and $ \norm{\bw^i}_{\bbe(J)} \leq R \ (i = 1, 2) $, there exist a constant $ C = C(R) > 0 $, a finite time $ T_R > 0 $ depending on $ R $ and $ \delta > 0 $ such that for $ 0 < T < T_R $,
	\begin{equation}
		\label{MMw}
		\norm{\sM(\bw^1) - \sM(\bw^2)}_{\bbf(J)} \leq C(R) \TD \norm{\bw^1 - \bw^2}_{\bbe(J)}.
	\end{equation}
\end{proposition}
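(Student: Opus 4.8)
The plan is to exploit that every entry of $\sM$ is assembled from the \emph{defect} quantities $\FfOinvtran$, $\FsOinvtran$ (and their non-inverse/non-transposed relatives), $\hg - 1$ and $1/\hg^2 - 1$, each of which vanishes at $t = 0$ and therefore gains a positive power of $T$ on a short interval. The first step is to record the elementary bound
\begin{equation*}
	\norm{\hF - \bbi}_{\Lq{\infty}(J; \W{1}(\OM \backslash \Sigma))}
	\leq \int_0^T \norm{\hnab \hv(\tau)}_{\W{1}(\OM \backslash \Sigma)} \, \d \tau
	\leq \TQ \norm{\hv}_{\bbe_{\hv}(J)}.
\end{equation*}
For $q > 3$ the embedding $\W{1} \hookrightarrow \Lq{\infty}$ holds in three dimensions, so once $T$ is small $\hF$ is invertible and a Neumann series yields the same $\TQ$-smallness for $\FfOinv$, $\FfOinvtran$, $\FsOinv$, $\FsOinvtran$. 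The growth variable is treated analogously: from $\pt \hg = \tfrac{\gamma \beta}{3 \hrs} \hcs \hg$ with $\rv{\hg}_{t=0} = 1$ and the boundedness of $\hcs$ in $\Lq{\infty}(J; \W{2 - \frac{2}{q}}(\Os))$, one gets $\hg - 1$ and $1/\hg^2 - 1$ small in $\W{1}(J; \W{1}(\Os))$ with a positive power of $T$. Since $q > 3$ makes $\W{2 - \frac{2}{q}}$ and the relevant surface trace spaces Banach algebras, products are controlled factor by factor.

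For the bound \eqref{Mw} I would proceed component by component. The interior terms $\bK = \hdiv \kf$, $\bar{\bK}_s = \hdiv \ks$, $\bK_s^g$, $F^1 = \hdiv \tFf$, $\bar{F}^1_s = \hdiv \tFs$ and the reaction terms $F^5$, $F^6$ are, in each summand, a product of one small defect factor with factors bounded by $\norm{\bw}_{\bbe(J)}$; distributing regularity through the $\W{1}$-algebra property together with the parabolic embeddings recalled in Section \ref{Section: Model problems} places them in $\bbf_1(J)$, $\bbf_6(J)$, $\bbf_{10}(J)$, $\bbf_{11}(J)$ with a factor $\TD$. The interface and boundary stress terms $\bH^1 = -\jump{\tk} \hn_\Sigma$, $H^3$, $F^2$, $F^3$, $F^4$ are estimated in their trace classes $\W{\onehalf - \frac{1}{2q}}(J; \Lq{q}) \cap \Lq{q}(J; \W{1 - \frac{1}{q}})$ on $\Sigma$, $G$ and $S$ using the trace theory of Section \ref{Section: Model problems}, the defect smallness again producing $\TD$. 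The affine $+1$ on the right of \eqref{Mw} accounts for summands in which the small defect factor multiplies a coefficient (such as $1/\hg$ or $\hg$) bounded independently of $\bw$.

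The contraction estimate \eqref{MMw} rests on the same mechanism together with the (multi)linear dependence of each nonlinearity on its arguments. I would expand each difference $\sM(\bw^1) - \sM(\bw^2)$ telescopically into a sum of products in which exactly one factor is a difference of defect quantities. Because
\[
	\hF(\hv^1) - \hF(\hv^2) = \int_0^t \hnab(\hv^1 - \hv^2) \, \d \tau
\]
obeys the same $\TQ$-bound, now controlled by $\norm{\hv^1 - \hv^2}_{\bbe_{\hv}(J)}$, and the smooth maps $A \mapsto A^{-1}$ and $s \mapsto 1/s^2$ are locally Lipschitz on the ball of radius $R$, while the remaining factors stay bounded by $R$, one obtains \eqref{MMw} with constant $C(R)$ and a factor $\TD$. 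The shared initial data $\rv{\hc^i}_{t=0} = \co$, $\rv{\hcss}_{t=0} = 0$, $\rv{\hg^i}_{t=0} = 1$ guarantee that every difference again vanishes at $t = 0$, so no boundary contributions at $t = 0$ spoil the gain of $T$.

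The main obstacle I anticipate is the divergence defect $G = -(\FOinvtran) : \hnab \hv$, which must lie in $\bbf_2(J) = \W{1}(J; \dW{-1}(\OM)) \cap \Lq{q}(J; \W{1}(\OM \backslash \Sigma))$. The spatial part is a routine algebra estimate, but the temporal part in the negative-order space $\dW{-1}$ cannot absorb the apparent derivative landing on $\pt \hv$. To handle it I would invoke the Piola identity $\hdiv(\hJ \inv{\hF}) = 0$, together with $\hJf = 1$ and $\hJs = \hg^3$, to rewrite $G$ in divergence form $G = \hdiv \bh$ with $\bh$ assembled from $\hv$ and the defect factors; then $\pt G = \hdiv \pt \bh$ and the duality definition of $\dW{-1}$ reduce the claim to an $\Lq{q}(J; \Lq{q})$-bound for $\pt \bh$, which is once more a product carrying a factor $\TD$. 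This is precisely the point flagged in the Remark following \eqref{Compatibility: TwoPhase-contact-cylinder}. A second delicate point is the velocity-trace term $\bH^2$, whose target $\bbf_4(J)$ demands the high spatial order $\W{2 - \frac{1}{q}}(G)$; here the defect factor, though only first order in space, is first order in time and small, so the estimate closes by the anisotropic space-time product estimates already employed in the smooth-domain analysis \cite{AL2021}.
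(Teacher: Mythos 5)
Your proposal is correct and takes essentially the same route as the paper: the paper's own proof simply defers the bulk of the work to \cite[Proposition 4.2]{AL2021}, whose mechanism is exactly the one you describe (smallness of the defect quantities $\hF-\bbi$, $\hg-1$ with a positive power of $T$, the algebra/embedding properties for $q>3$, telescoping of differences, and the divergence-form rewriting of the divergence defect to control the $\W{1}(J;\dW{-1})$ norm), and then displays only the explicit tensor-algebra decompositions of $\bH^2(\bw^1)-\bH^2(\bw^2)$ and $H^3(\bw^1)-H^3(\bw^2)$, the two terms that are new relative to the smooth-domain case because of the outflow condition on $G$. Your sketch identifies the same two delicate points and is, if anything, more explicit about the general mechanism than the paper; the only content of the paper's proof you do not reproduce is those two written-out difference identities, which your telescoping strategy subsumes.
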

\begin{proof}
	This proposition is same as in \cite[Proposition 4.2]{AL2021} and the proof is very similar. The different points we need to pay attention are the estimates of $ \bH^2 $ and $ H^3 $, where 
	\begin{align*}
		& \bH^2 = - \left( \bbi - \big((\invtr{\hF} \hn_G) \otimes (\invtr{\hF} \hn_G) - \hn_G \otimes \hn_G \big) \right) \hv,  \\
		& H^3 = - \hsigma \invtr{\hF} \hn_G \cdot (\invtr{\hF} \hn_G) + \bS(\hv, \hpi) \hn_G \cdot \hn_G.
	\end{align*}
	Note that for tensors $ \bS, \bT \in \bbr^{3 \times 3} $ and vectors $ \bu, \bv \in \bbr^3 $, we have the tensor algebra property
	\begin{gather*}
		\bS(\bu \otimes \bv) = (\bS \bu) \otimes \bv, \quad 
		(\bu \otimes \bv)\bS= \bu \otimes (\tran{\bS} \bv), \\
		\bT \bu \cdot (\tran{\bS} \bv) = (\bS \bT) \bu \cdot \bv = \bS (\bT \bu) \cdot \bv.
	\end{gather*}
	Then one can derive
	\begin{align*}
		& \bH^2(\bw^1) - \bH^2(\bw^2) \\
		& \qquad = \Big(
			\big(\invtr{\hF}(\hnab \hv^1) - \invtr{\hF}(\hnab \hv^2)\big) (\hn_G \otimes \hn_G) \inv{\hF}(\hnab \hv^1) \\
			& \qquad \qquad \qquad + \invtr{\hF}(\hnab \hv^2) (\hn_G \otimes \hn_G) \big(\inv{\hF}(\hnab \hv^1) - \inv{\hF}(\hnab \hv^2)\big) 
			\Big) \hv^1 \\
		& \qquad \qquad + \Big(
			\big(\invtr{\hF}(\hnab \hv^2) - \bbi\big) (\hn_G \otimes \hn_G) \inv{\hF}(\hnab \hv^1) \\
			& \qquad \qquad \qquad + (\hn_G \otimes \hn_G) \big(\inv{\hF}(\hnab \hv^2) - \bbi\big) 
		\Big) (\hv^1 - \hv^2) - (\hv^1 - \hv^2) \bbi.
	\end{align*}
	and
	\begin{align*}
		& H^3(\bw^1) - H^3(\bw^2) \\
		& \qquad = - (\inv{\hF}(\hnab \hv^1) - \bbi) \big(\hsigma(\hw^1) \invtr{\hF}(\hnab \hv^1) - \hsigma(\hw^2) \invtr{\hF}(\hnab \hv^2) \big) \hn_G \cdot \hn_G \\
		& \qquad \quad - \big(\inv{\hF}(\hnab \hv^1) - \inv{\hF}(\hnab \hv^2)\big) (\hsigma(\hw^2) \invtr{\hF}(\hnab \hv^2)) \hn_G \cdot \hn_G 
		- \bK \hn_G \cdot \hn_G.
	\end{align*}
	With the general trace theorem and multiplication property, see e.g. \cite[Lemma 2.1]{AL2021}, one can derive the estimates for $ \bH^2 $ and $ H^3 $ following the procedure in \cite[Proposition 4.2]{AL2021}.
\end{proof}

Now we are in the position to show Theorem \ref{Theorem: main}.
\begin{proof}[\textbf{Proof of Theorem \ref{Theorem: main}}]
	For $ (\vo, \co) \in X_\gamma $, by the trace method of real interpolation, see e.g. Lunardi \cite[Proposition 1.13]{Lunardi2018}, there exists a function $ \tilde{\bw} = (\tv, \tilde{c}) \in \bbe_{\hv}(\infty) \times \bbe_{\hc}(\infty) $ such that $ \rv{\tilde{\bw}}_{t = 0} = (\vo, \co) $. Then one can reduce \eqref{Nonlinear: Lagrangian} to the case of trivial initial data by eliminating $ \tilde{\bw} $. Now we set a well-defined constant for 
	\begin{equation*}
		C_{\sL} := \sup_{0 \leq T \leq 1} \norm{\sL^{-1}}_{\cL(\bbfo(J), \bbeo(J))},
	\end{equation*}
	which can be verified to be bounded as $ t \rightarrow 0 $ as in \cite{AL2021}.
	Choose $ R > 0 $ large such that $ R \geq 2 C_\sL \norm{(\vo, \co)}_{X_\gamma} $. Then 
	\begin{equation}
		\label{L0}
		\norm{\sL^{-1} \sN(0, \wo)}_{\bbe(J)} 
		\leq C_\sL \norm{(\vo, \co)}_{X_\gamma}
		\leq \frac{R}{2}.
	\end{equation}
	For $ \norm{\bw^i}_{\bbe(J)} \leq R $, $ i = 1, 2 $, we take $ T_R > 0 $ small enough such that $ C_\sL C(R) \TD_R \leq 1/2 $,	where $ C(R) $ is the constant in \eqref{MMw}. Then for $ 0 < T < T_R $, we infer from Proposition \ref{Propostion: contraction} that
	\begin{equation}
		\begin{aligned}
			\label{L12}
			& \norm{\sL^{-1}\sN(\bw^1, \wo) - \sL^{-1}\sN(\bw^2, \wo)}_{\bbe(J)}  \\
			& \qquad \qquad \qquad \qquad \leq C_\sL C(R) \TD \norm{\bw^1 - \bw^2}_{\bbe(J)} 
			\leq \onehalf \norm{\bw^1 - \bw^2}_{\bbe(J)},
		\end{aligned}
	\end{equation}
	which implies the contraction property.
	From \eqref{L0} and \eqref{L12}, we have
	\begin{align*}
		& \norm{\sL^{-1}\sN(\bw, \wo)}_{\bbe(J)} \\
		& \qquad \leq \norm{\sL^{-1}\sN(0, \wo)}_{\bbe(J)} + \norm{\sL^{-1}\sN(\bw, \wo) - \sL^{-1}\sN(0, \wo)}_{\bbe(J)}
		\leq R.
	\end{align*}
	Define $ \cM_{R,T} $ by
	\begin{equation*}
		\cM_{R,T} := \left\{ \bw \in \overline{B_{\bbeo(J)}(0,R)}: w = (\hv, \hpi, \hc, \hcs, \hg), \right\},
	\end{equation*}
	a closed subset of $ \bbeo(J) $. Hence, $ \sL^{-1}\sN : \cM_{R,T} \rightarrow \cM_{R,T} $ is well-defined for all $ 0 < T < T_R $ and a strict contraction. Since $ \bbeo(J) $ is a Banach space, the Banach fixed-point Theorem implies the existence of a unique fixed-point of $ \sL^{-1}\sN $ in $ \cM_{R,T} $, i.e., \eqref{Nonlinear: Lagrangian} admits a unique strong solution in $ \cM_{R,T} $ for small time $ 0 < T < T_R $.
	
	The uniqueness in $ \bbeo(J) $ follows easily by mimicking the continuity argument in \cite[Proof of Theorem 2.1]{AL2021}, we omit it here and complete the proof.
\end{proof}

\appendix
\section{Partition of unity with vanishing Neumann trace}
\label{AppSection: Partition}
\begin{proposition} \label{AppendixPropositon: POU-G0}
	Let $ G_0 := G_0^+ \cup G_0^- \cup \Gamma \subset \bbr^2 $ with three disjoint components and $ \Bar{G_0^-} \subset G_0 $, be a bounded domain with the boundary $ \partial G_0 \in C^{m+1} $ and an interface $ \Gamma = \partial G_0^- \in C^{m+1} $, $ m \in \bbn_+ $. $ \Bar{G_0^-} \subset G_0 $. Then for each finite open covering $ \{ U_k \}_{k = 1}^{N} $ of $ \Gamma \cup \partial G_0 $ with $ \{ U_k \}_{k = 1}^{N_1} \supset \Gamma $ and $ \{ U_k \}_{k = N_1 + 1 }^{N} \supset \partial G_0 $, there exist open sets $ U_0 \subset G_0^- $ and $ U_{N + 1} \subset G_0^+ $ such that $ U_0 \cap \Gamma = \emptyset $, $ U_{N + 1} \cap (\partial G_0 \cup \Gamma) = \emptyset $, $ \bigcup_{k = 0}^{N + 1} U_k \supset \Bar{G_0} $. Moreover, there is a subordinated partition of unity $ \{ \psi_k \}_{k = 0}^{N + 1} \subset C_c^m(\bbr^2) $, such that $ \supp \psi_k \subset U_k $ and $ \ptial{\nu_{G_0}} \psi_k = 0 $ on $ \partial G_0 $, $ \ptial{\nu_{\Gamma}} \psi_k = 0 $, on $ \Gamma $, where $ \nu_{\Gamma} $ denotes the unit normal vector pointing from $ G_0^- $ to $ G_0^+ $, $ \nu_{G_0} $ is the outer unit normal vector on $ \partial G_0 $.
\end{proposition}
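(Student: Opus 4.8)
The plan is to build the family by combining ordinary partitions of unity living on the one-dimensional manifolds $\Gamma$ and $\partial G_0$, extended into the bulk through normal (collar) coordinates with a transversal cutoff, together with two interior bump functions whose supports avoid both curves. The vanishing Neumann trace will then come essentially for free: the normalizing denominator will be identically $1$ on a full neighborhood of each curve, where the extended functions are constant along normals and hence have zero normal derivative.

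First I would fix the geometry. Since $\overline{G_0^-}\subset G_0$, the curves $\Gamma$ and $\partial G_0$ are disjoint, so for some $\varepsilon>0$ their two-sided tubular neighborhoods $\{|r|<\varepsilon\}$ are disjoint, where $r$ denotes signed distance. Using $\Gamma,\partial G_0\in C^{m+1}$, the normal-coordinate map $(s,r)\mapsto\gamma(s)+r\,\nu(s)$, with $\gamma$ a $C^{m+1}$ arclength parametrization and $\nu$ the $C^{m}$ unit normal, is a $C^{m}$-diffeomorphism of the collar for $\varepsilon$ small; hence $s(x),r(x)\in C^{m}$, $\partial_\nu=\pm\partial_r$ on the curve, and $\partial_r s\equiv0$ so that any function of $s$ alone is constant along normals. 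By compactness I would then choose $U_0\subset G_0^-$ and $U_{N+1}\subset G_0^+$, each disjoint from the corresponding curve, so that $\{U_k\}_{k=0}^{N+1}$ covers $\overline{G_0}$; this is possible because $\overline{G_0}$ minus the open flat collars $\{|r|\le\varepsilon/2\}$ is a compact subset of $G_0^-\cup G_0^+$.

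The core construction is as follows. Take $C^{m+1}$ partitions of unity $\{\phi_k\}_{k=1}^{N_1}$ on $\Gamma$ subordinate to $\{U_k\cap\Gamma\}$ and $\{\phi_k\}_{k=N_1+1}^{N}$ on $\partial G_0$ subordinate to $\{U_k\cap\partial G_0\}$, and a cutoff $\chi\in C^\infty(\bbr)$ with $\chi\equiv1$ on $\{|r|\le\varepsilon/2\}$ and $\supp\chi\subset(-\varepsilon,\varepsilon)$. For a curve chart set $\Psi_k(x):=\phi_k(s(x))\,\chi(r(x))$ in the collar and $\Psi_k:=0$ elsewhere; for the interior charts take nonnegative bumps $\Psi_0\in C_c^\infty(U_0)$, $\Psi_{N+1}\in C_c^\infty(U_{N+1})$ that vanish on both flat collars $\{|r|\le\varepsilon/2\}$ but are positive on $\overline{G_0}$ off those collars, so that in particular they overlap the transition zones $\{\varepsilon/2<|r|<\varepsilon\}$. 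Each $\Psi_k$ is $C^m$. Put $S:=\sum_{k=0}^{N+1}\Psi_k$. On each collar $S\ge\chi(r)>0$ where $\chi>0$, and off the collars the interior bumps give $S>0$, so $S>0$ on $\overline{G_0}$; moreover, on the flat collars only the curve functions are present and $\sum_k\phi_k\equiv1$, whence $S\equiv1$ there. I then define $\psi_k:=\Psi_k/S$, which lies in $C_c^m(\bbr^2)$ with $\supp\psi_k\subset\supp\Psi_k\subset U_k$ and $\sum_k\psi_k=1$.

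The verification of the Neumann conditions is then immediate: on the flat neighborhood $\{|r|\le\varepsilon/2\}$ of each curve one has $S\equiv1$, so $\psi_k=\Psi_k=\phi_k(s)$ for a curve chart and $\psi_k=0$ for the remaining charts; since $\phi_k(s)$ is constant along normals, $\partial_{\nu_\Gamma}\psi_k=0$ on $\Gamma$ and $\partial_{\nu_{G_0}}\psi_k=0$ on $\partial G_0$, with no correction from the normalization. I expect the genuine work to be the bookkeeping rather than any conceptual difficulty: confirming that the tubular-neighborhood map, and hence $s(x),r(x)$, are truly $C^m$ when the curves are only $C^{m+1}$ (so that $\psi_k\in C_c^m$ as claimed), and arranging the interior bumps to overlap the transition zones while vanishing on the flat collars, so that $S$ is simultaneously positive everywhere and $\equiv 1$ near the curves. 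Once the denominator is seen to be locally constant at the curves, the flatness of the $\psi_k$ is the easy part.
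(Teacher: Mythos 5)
Your construction is correct, but it takes a genuinely different route from the paper's. The paper does not build the partition from scratch: it invokes Wilke's Proposition 5.3 twice to obtain two partitions of unity $\{\phi_k\}_{k=0}^{N_1}$ over $\overline{G_0^-}$ and $\{\varphi_k\}_{k=1}^{N+1}$ over $\overline{G_0^+}$, each already having vanishing normal derivative on the relevant curve(s), and then glues them by setting $\psi_k := \eta(d(x))\,\phi_k + (1-\eta(d(x)))\,\varphi_k$, where $d$ is the distance to $\bigcup_{k\le N_1}U_k$ and $\eta$ is a one-dimensional cutoff that is locally constant both near $0$ and away from $0$, so the gluing creates no new normal derivatives at $\Gamma$ or $\partial G_0$. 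You instead reprove the underlying single-curve statement directly via collar coordinates ($\Psi_k=\phi_k(s)\chi(r)$ is constant along normals on the flat collar) and exploit the disjointness of the two collars, guaranteed by $\overline{G_0^-}\subset G_0$, to use one global normalizer $S$ that is $\equiv 1$ near both curves; no gluing step is needed. The paper's route buys brevity (all collar analysis is outsourced to the cited result); yours buys self-containedness and makes the mechanism explicit. Two standard details remain to be pinned down in your version: shrink $\varepsilon$ so that the full tubular neighborhood of $\supp\phi_k$, not merely $\supp\phi_k$ itself, lies in $U_k$ (so that $\supp\Psi_k\subset U_k$), and, since $S>0$ only on a neighborhood of $\overline{G_0}$, multiply $\Psi_k/S$ by a cutoff equal to $1$ on such a neighborhood so as to obtain elements of $C_c^m(\bbr^2)$ without disturbing the traces.
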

\begin{proof}
	The proof is based on \cite[Proposition 5.3]{Wilke2020}. For any finite open covering $ \{ U_k \}_{k = 1}^{N} $ of $ \Gamma \cup \partial G_0 $, where $ \bigcup_{k = 1}^{N_1} U_k \supset \Gamma $ and $ \bigcup_{k = N_1 + 1}^{N} U_k \supset \partial G_0 $, there exists $ U_{N_1 + 1} \subset G_0^+ $, such that $ U_{N_1 + 1} \cap (\Gamma \cup \partial G_0) = \emptyset $ and $ \bigcup_{k = 1}^{N + 1} U_k \supset \Bar{G_0^+} $ by \cite[Proposition 5.3]{Wilke2020}, while at the same time, there exists $ U_0 \subset G_0^- $, such that $ U_0 \cap \Gamma = \emptyset $ and $ \bigcup_{k = 0}^{N_1} U_k \supset \Bar{G_0^-} $. 
	
	Moreover, there is two subordinated partitions of unity $ \{ \phi_k \}_{k = 0}^{N_1} $, $ \{ \varphi_k \}_{k = 1}^{N + 1} \subset C_c^m(\bbr^2) $ such that $ \supp \phi_k \subset U_k $ and $ \supp \varphi_k \subset U_k $, $ \ptial{\nu_{\Gamma}} \phi_k = 0 $, $ - \ptial{\nu_{\Gamma}} \varphi_k = 0 $, on $ \Gamma $ and $ \ptial{\nu_{G_0}} \varphi_k = 0 $, on $ \partial G_0 $. Now we argue by the truncation. Let $ V := \bigcup_{k = 0}^{N + 1} U_k  $ and $ V^- := \bigcup_{k = 0}^{N_1} U_k $. For $ x \in V $, define $ d(x) $ as the distance from $ x $ to $ V^- $. Let $ \eta(x) \in C_0^\infty (\bbr; [0,1]) $ be a cutoff function over $ V $ such that $ \eta(s) = 1 $ if $ \abs{s} < \epsilon $ and $ \eta(s) = 0 $ if $ \abs{s} > 2 \epsilon $ for $ \epsilon > 0 $ small. Define $ \psi_k := \eta(d(x)) \phi_k + (1 - \eta(d(x))) \varphi_k $. Then
	\begin{equation*}
		\sum_{k = 0}^{N + 1} \psi_k = \sum_{k = 0}^{N_1} \eta \phi_k + \sum_{k = 1}^{N + 1} (1 - \eta)\varphi_k = 1.
	\end{equation*}
	Then $ \{ \psi_k \}_{k = 0}^{N + 1} \in C_c^m(\bbr^2) $ is a partition of unity such that $ \supp \psi_k \subset U_k $. Since $ \eta(s) $ is constant in both a neighborhood of and far away from $ s = 0 $, one obtains $ \ptial{\nu_{\Gamma}} \psi_k = 0 $, on $ \Gamma $ and $ \ptial{\nu_{G_0}} \psi_k = 0 $, on $ \partial G_0 $.	
\end{proof}
Analogous to \cite[Section 5.2]{Wilke2020}, the results can be extended to the general cylindrical domain $ \OM := \OM^+ \cup \OM^- \cup \Sigma $ with three disjoint components satisfying $ \Bar{\OM^-} \subset \OM $, $ \partial \OM = G \cup \Bar{S} $, $ \partial \OM^- = \Sigma \cup G_0^- $, $ \partial \OM^+ = \Sigma \cup S \cup G_0^+ $,
where $ L_1 < L_2 < \infty $ are two constant, $ G := \cup_{j = 1,2} G_0 \times \{L_j\} $ and $ S, \Sigma \subset \bbr^3 $ are general hypersurfaces which will be assigned with certain regularity. In particular, $ \partial \Sigma = \cup_{j = 1,2} \Gamma \times \{L_j\} $, $ \partial S = \cup_{j = 1,2} \partial G_0 \times \{L_j\} $ are sub-manifolds of dimension one in $ \bbr^3 $.
Moreover, we assume that $ S \perp G $ at $ \partial G $, $ \Sigma \perp G $ at $ \partial \Sigma $ and $ \Sigma \cap S = \emptyset $.

\begin{proposition} \label{AppendixPropositon: POU-Omega}
	Let $ \OM $ be a bounded domain defined above with $ G, S, \Sigma \in C^{m} $ and $ \partial \Sigma, \partial S \in C^{m + 1} $, $ m \in \bbn_+ $. 
	Then for each finite open covering  $ \{ U_k \}_{k = 1}^{N} $ of $ \partial S \cup \partial \Sigma $ in $ \bbr^3 $, there exist open sets $ U_j \subset \bbr^3 $, $ j \in \{ N + 1, ..., N + 5 \} $, such that
	\begin{itemize}
		\item $ U_{N+1} \cap (\Bar{\OM} \backslash \Bar{G}) \neq \emptyset $, $ U_{N+1} \cap \Bar{G} = \emptyset $,
		\item $ U_{N+1+j} \cap U_{N+1} \cap G_j^+ \neq \emptyset $, $ U_{N+1+j} \cap (\Sigma \cup S) = \emptyset $, $ j = 1,2 $,
		\item $ U_{N+3+j} \cap U_{N+1} \cap G_j^- \neq \emptyset $, $ U_{N+3+j} \cap \Sigma = \emptyset $, $ j = 1,2 $,
		\item $ \bigcup_{j = 1}^{N+5} U_j \supset \Bar{\OM} $.
	\end{itemize}
	Moreover, there is a subordinated partition of unity $ \{ \psi_k \}_{k = 1}^{N+5} \subset C_c^m(\bbr^3) $, such that $ \supp \psi_k \subset U_k $ and $ \ptial{\nu_S} \psi_k = 0 $ on $ \partial S $, $ \ptial{\nuSigma} \psi_k = 0 $, on $ \partial \Sigma $.
\end{proposition}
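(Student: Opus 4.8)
The plan is to reduce the three-dimensional construction to the two-dimensional one already furnished by Proposition~\ref{AppendixPropositon: POU-G0}, exploiting the cylindrical product structure of $\OM$ near the two caps together with the perpendicularity assumptions $S \perp G$ at $\partial S$ and $\Sigma \perp G$ at $\partial\Sigma$. The key observation is that the contact curves $\partial\Sigma = \bigcup_{j} \Gamma \times \{L_j\}$ and $\partial S = \bigcup_{j} \partial G_0 \times \{L_j\}$ are nothing but copies of the cross-sectional interface $\Gamma$ and outer boundary $\partial G_0$ lifted to the cap heights $L_1, L_2$. Hence the Neumann traces required on $\partial\Sigma$ and $\partial S$ should be inheritable from the cross-sectional traces on $\Gamma$ and $\partial G_0$ that Proposition~\ref{AppendixPropositon: POU-G0} already provides.

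First I would complete the covering. Starting from the given covering $\{U_k\}_{k=1}^N$ of $\partial S \cup \partial\Sigma$, a standard compactness argument yields the five additional open sets: an interior chart $U_{N+1}$ with $U_{N+1} \cap \Bar{G} = \emptyset$ covering the bulk away from the caps, two charts $U_{N+2}, U_{N+3}$ localizing near the outer cap regions $G_1^+, G_2^+$, and two charts $U_{N+4}, U_{N+5}$ localizing near the inner cap regions $G_1^-, G_2^-$; these are taken small enough that $U_{N+2}, U_{N+3}$ meet neither $\Sigma$ nor $S$ and $U_{N+4}, U_{N+5}$ do not meet $\Sigma$, so that $\bigcup_{j=1}^{N+5} U_j \supset \Bar{\OM}$.

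Next I would produce the functions. Along each contact curve the perpendicularity $\Sigma \perp G$ (resp. $S \perp G$) forces the surface normal $\nuSigma$ (resp. $\nu_S$), restricted to $\partial\Sigma$ (resp. $\partial S$), to be tangent to the cap $G$ and to coincide there with the cross-sectional normal $\nu_{\Gamma}$ (resp. $\nu_{G_0}$). Consequently, for any function that is constant in the axial variable throughout a collar of each cap, the requirement $\ptial{\nuSigma}\psi_k = 0$ on $\partial\Sigma$ collapses to the two-dimensional identity $\ptial{\nu_{\Gamma}}\psi_k = 0$ on $\Gamma$, and analogously for $\partial S$. I would therefore apply Proposition~\ref{AppendixPropositon: POU-G0} to the cross-section $G_0$ to obtain a planar partition of unity $\{\phi_k\} \subset C_c^m(\bbr^2)$ with $\ptial{\nu_{\Gamma}}\phi_k = 0$ on $\Gamma$ and $\ptial{\nu_{G_0}}\phi_k = 0$ on $\partial G_0$, lift each $\phi_k$ to be independent of the axial coordinate in a neighborhood of the two caps, and multiply by smooth axial cutoffs that distribute the mass among the cap charts $U_{N+2}, \dots, U_{N+5}$ while the bulk is covered by $U_{N+1}$.

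The gluing then mirrors the truncation argument in the proof of Proposition~\ref{AppendixPropositon: POU-G0}: one assembles separate families over $\Bar{\OM^-}$ and $\Bar{\OM^+}$ and interpolates between them by a cutoff $\eta(d(x))$ of the distance $d(x)$ to the inner region, renormalizing so that $\sum_k \psi_k = 1$, $\psi_k \geq 0$ and $\supp\psi_k \subset U_k$. Since $\eta$ and the axial cutoffs are locally constant in the directions relevant near the contact curves, they contribute nothing to the normal derivatives there, and the Neumann traces survive. The main obstacle is precisely this last compatibility check at the contact curves: one must verify that the axial lifting is consistent with the \emph{curved} surfaces $S$ and $\Sigma$, i.e. that $\ptial{\nuSigma}\psi_k$ and $\ptial{\nu_S}\psi_k$ vanish for the genuine surface normals and not merely for their cross-sectional projections. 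This is exactly where the ninety-degree contact assumption is indispensable, as it pins $\nuSigma$ and $\nu_S$ into the cap plane along the contact curves and thereby aligns the three-dimensional Neumann conditions with the two-dimensional ones delivered by Proposition~\ref{AppendixPropositon: POU-G0}.
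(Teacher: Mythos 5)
Your proposal is correct and follows exactly the route the paper intends: the paper itself omits the proof, stating only that the result is obtained "analogous to [Wilke, Section 5.2]" by extending Proposition \ref{AppendixPropositon: POU-G0}, and your argument — slicing to the cross-section, using the ninety-degree contact condition to identify $\nuSigma|_{\partial\Sigma}$ and $\nu_S|_{\partial S}$ with the planar normals $\nu_\Gamma$ and $\nu_{G_0}$, lifting the planar partition axially, and gluing by the same distance-cutoff truncation — is precisely that extension. The one point worth making explicit is that the axial cutoffs and the renormalization do not disturb the Neumann traces because the relevant normals at the contact curves have no axial component, which your argument already implicitly uses.
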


\section{Auxiliary transmission problems}
\label{AppSection: Auxiliary transmission problems}
In this section, we give the existence and uniqueness of auxiliary elliptic and parabolic transmission problems, respectively. 
Given $ G_0 := G_0^+ \cup G_0^- \cup \Gamma \subset \bbr^2 $ with $ \Bar{G_0^-} \subset G_0 $, we define the cylindrical domain for $ 0 < L_1 < L_2 < \infty $ as
$ \OM := \OM^+ \cup \OM^- \cup \Sigma $, $ \Bar{\OM^-} \subset \OM $ with three disjoint parts, satisfying $ \partial \OM = G \cup \Bar{S} $, $ \partial \OM^- = \Bar{\Sigma} \cup G_0^- $, $ \partial \OM^+ = \Bar{\Sigma} \cup \Bar{S} \cup G_0^+ $,
where $ G := \cup_{j = 1,2} G_0 \times \{L_j\} $ and $ S, \Sigma \subset \bbr^3 $ are general hypersurfaces which will be assigned with certain regularity. Particularly, $ \partial \Sigma = \cup_{j = 1,2} \Gamma \times \{L_j\} $, $ \partial S = \cup_{j = 1,2} \partial G_0 \times \{L_j\} $ are two one dimensional sub-manifolds of $ \bbr^3 $.
Moreover, we assume that $ S \perp G $ at $ \partial G $, $ \Sigma \perp G $ at $ \partial \Sigma $ and $ \Sigma \cap S = \emptyset $, which means $ \partial S $, $ \partial \Sigma $ are the contact lines involving with ninety-degree contact angles.
In addition, $ \nu $ denotes the unit outer normal vector on the interface $ \Sigma $ (point from $ \OM^- $ to $ \OM^+ $) and the boundary $ \partial \OM $.


\subsection{Elliptic transmission problems}
Firstly, for $ \lambda > 0 $, we consider the elliptic system
\begin{equation} \label{Appendix: Elliptic transmission}
	\begin{alignedat}{3}
		\lambda \phi - \Delta \phi & = f, && \tin \OM \backslash \Sigma, \\
		\jump{\rho \phi} & = g_1, && \ton \Sigma, \\
		\jump{\ptial{\nuSigma} \phi} & = g_2, && \ton \Sigma, \\
		\rho \phi & = g_3, && \ton G \backslash \partial \Sigma, \\
		\ptial{\nu_S}\phi & = g_4, && \ton S.
	\end{alignedat}
\end{equation}
Then we have the following theorem.
\begin{theorem} \label{AppendixTheorem: Elliptic}
	Let $ q \geq 2 $, $ \rho > 0 $, $ T > 0 $, $ J = (0,T) $ and $ s \in \{0, 1\} $. Assume that $ \OM $, $ \Sigma $, $ G $, $ S $ are defined as above and $ \Sigma $, $ G $, $ S $ are of class $ C^3 $, as well as $ \partial G \in C^4 $. Then there is a constant $ \lambda_0 \geq 0 $ such that for all $ \lambda > \lambda_0 $, \eqref{Appendix: Elliptic transmission} admits a unique solution $ \phi \in \W{2 + s }(\OM \backslash \Sigma) $ if and only if the data satisfy the following regularity and compatibility conditions:
	\begin{enumerate}
		\item $ f \in \W{s}(\OM \backslash \Sigma) $,
		\item $ g_1 \in \W{2 + s - \frac{1}{q}}(\Sigma) $, 
		\item $ g_2 \in \W{1 + s - \frac{1}{q}}(\Sigma) $, 
		\item $ g_3 \in \W{2 + s - \frac{1}{q}}(G \backslash \partial \Sigma) $, 
		\item $ g_4 \in \W{1 + s - \frac{1}{q}}(S) $, 
		\item $ \jump{g_3} = g_1 $, $ \jump{\ptial{\nuSigma}(g_3 / \rho)} = g_2 $, at $ \partial \Sigma $,
		\item $ \ptial{\nu_S}(g_3 / \rho) = g_4 $, on $ \partial S $.
	\end{enumerate}
\end{theorem}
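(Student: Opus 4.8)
The plan is to prove Theorem \ref{AppendixTheorem: Elliptic} by the same localization strategy used for the two-phase Stokes problem in Section \ref{Subsection: Localization procedure}, now in the scalar elliptic transmission setting, with the resolvent parameter $ \lambda $ playing the role that smallness of $ T $ played there. As a first step I would reduce to homogeneous boundary and transmission data: using trace and extension theory for $ \W{2+s}(\OM \backslash \Sigma) $ together with the edge compatibility conditions (6)--(7), one constructs a single lift $ \phi_0 \in \W{2+s}(\OM \backslash \Sigma) $ realizing $ \rho\phi_0 = g_3 $ on $ G $, $ \jump{\rho\phi_0} = g_1 $, $ \jump{\ptial{\nuSigma}\phi_0} = g_2 $ on $ \Sigma $ and $ \ptial{\nu_S}\phi_0 = g_4 $ on $ S $ simultaneously; then $ \phi - \phi_0 $ solves \eqref{Appendix: Elliptic transmission} with $ (g_1,g_2,g_3,g_4)=0 $ and a modified $ f \in \W{s}(\OM\backslash\Sigma) $. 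It therefore suffices to solve the reduced problem and to produce an a priori estimate with constant uniform over $ \lambda > \lambda_0 $.

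For the reduced problem I would localize with the partition of unity $ \{\psi_k\} $ of Proposition \ref{AppendixPropositon: POU-Omega}, whose charts are of four types: interior charts (whole space); charts meeting only one of $ G $, $ S $, $ \Sigma $ away from the edges (half spaces); and charts around the contact lines $ \partial S $ and $ \partial\Sigma $ (quarter-space geometries). The resulting model problems are the whole-space resolvent equation $ \lambda\phi - \Delta\phi = f $, the half-space problems with a Dirichlet condition on $ G $ or a Neumann condition on $ S $, and the half-space transmission problem across $ \Sigma $ — all classical, see \cite[Chapter 6]{PS2016} — together with the genuinely new edge problems: the quarter-space with a Dirichlet condition on one face and a Neumann condition on the perpendicular face (near $ \partial S $), and the half-space-with-interface carrying the transmission condition on $ \Sigma $ and the Dirichlet condition on the perpendicular face $ G $ (near $ \partial\Sigma $).

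The core of the argument is the reflection at the contact lines, available precisely because the contact angle is ninety degrees. For the $ \partial S $-chart, with $ S = \{x_3=0\} $ (Neumann) and $ G = \{x_2=0\} $ (Dirichlet), I would even-reflect the data across $ S $; the extended problem is a half-space Dirichlet problem on $ \{x_2>0\} $, solvable by \cite{PS2016}, and the even symmetry in $ x_3 $ together with uniqueness forces $ \ptial{3}\phi = 0 $ on $ S $, recovering the Neumann condition. For the $ \partial\Sigma $-chart, with $ G = \{x_2=0\} $ (Dirichlet) and the interface $ \Sigma = \{x_3=0\} $, I would odd-reflect $ \phi $ across $ G $ (the coefficient $ \rho $ being even in $ x_2 $), which turns the problem into the whole-space transmission problem across $ \Sigma $ already solved above; uniqueness and the induced odd symmetry in $ x_2 $ then give $ \rho\phi = 0 $ on $ G $. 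The compatibility conditions (6)--(7) are exactly what guarantees that the reflected data remain in the correct fractional classes $ \W{2+s-1/q} $ and $ \W{1+s-1/q} $ across the edge. The bent versions of these charts are handled as small perturbations of the flat ones: after flattening $ S $, $ G $, $ \Sigma $ by the change of variables of Section \ref{subsubsection:transform-quarter}, the perturbation is controlled by $ \norm{\nabla\theta}_\infty \le \eta $ and absorbed by a Neumann series, transferring flat solvability to the curved charts.

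The final step glues the local solutions: multiplying each model solution operator by the cutoff $ \psi_k $ produces commutators $ [\Delta,\psi_k]\phi = (\Delta\psi_k)\phi + 2\nabla\psi_k\cdot\nabla\phi $, which are of order at most one in $ \phi $. Summing over charts recovers $ \phi $ up to a remainder operator $ \cR $ built from these lower-order terms; since the parameter-dependent estimate for the model problems controls $ \lambda\norm{\phi}_{\Lq{q}} + \norm{\phi}_{\W{2+s}} $, interpolation shows $ \norm{\cR} \to 0 $ as $ \lambda\to\infty $, so there is $ \lambda_0 $ with $ \norm{\cR} < 1 $ for $ \lambda>\lambda_0 $, whence $ (\bbi - \cR) $ is invertible and the solution operator follows. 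Uniqueness comes from the same a priori estimate (or directly from an energy argument exploiting the reflection symmetry), and the case $ s=1 $ is obtained by running the identical scheme with the higher-regularity model problems, which is where the extra smoothness $ \Sigma, G, S \in C^3 $ and $ \partial G \in C^4 $ is consumed. I expect the main obstacle to be exactly this reflection at the contact lines: verifying that the odd/even extensions keep the transmission and boundary data in the right fractional Sobolev spaces \emph{up to the edge}, and that the two competing reflections (across the Neumann face and across the Dirichlet/interface face) are mutually compatible — which is where the ninety-degree angle and the edge conditions (6)--(7) are indispensable.
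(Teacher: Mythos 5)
Your proposal is correct and follows essentially the same route as the paper's proof: localization with the partition of unity of Proposition \ref{AppendixPropositon: POU-Omega}, the same four model problems, reflection arguments at the ninety-degree contact lines for the quarter-space and the half-space-with-interface problems, Neumann series perturbation for the bent charts, and absorption of the lower-order localization remainders by taking $\lambda$ large. The only deviations are immaterial: at $\partial S$ you reflect evenly across the Neumann face $S$ to land in a half-space Dirichlet problem, whereas the paper reflects oddly across the Dirichlet face $G$ to land in a half-space Neumann problem (the paper itself notes in the Stokes setting that either choice works), and you perform one global reduction to homogeneous boundary/transmission data up front instead of the paper's chart-by-chart reductions.
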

\begin{proof}
	We first consider the case $ s = 0 $. The general tool is the localization procedure, see e.g. \cite{DHP2003}, \cite{PS2016}, \cite{Wilke2020}. To this end, one gives four model problems with respect to \eqref{Appendix: Elliptic transmission}. Namely,
	\begin{itemize}
		\item The half-space elliptic equation with Dirichlet boundary condition
		\item The quarter-space elliptic equation with Neumann and Dirichlet boundary conditions
		\item The half-space elliptic transmission problem with a flat interface and Dirichlet boundary condition
		\item The elliptic transmission problem in smooth domain with a flat interface and Neumann boundary condition.
	\end{itemize}
	The first and the last one have been solved well, readers are referred to e.g. Pr\"uss--Simonett \cite{PS2016}. 
	
	\textbf{Elliptic equation in a quarter space.} For a quarter space $ \{(x_1, x_2, x_3): x_1 \in \bbr, x_2 \in \bbr_+, x_3 \in \bbr_+\} $, we consider the problem for $ \lambda > 0 $ large,
	\begin{equation}\label{AppendixEqs: Laplace-quarter}
		\begin{alignedat}{3}
			\lambda \phi - \Delta \phi & = f, && \quad x_1 \in \bbr,\ x_2 > 0,\ x_3 > 0, \\
			\rho \phi & = g_1, && \quad x_1 \in \bbr,\ x_2 = 0,\ x_3 > 0, \\
			\ptial{3} \phi & = g_2, && \quad x_1 \in \bbr,\ x_2 > 0,\ x_3 = 0.
		\end{alignedat}
	\end{equation}
	Naturally, the compatibility condition at the contact line
	\begin{equation*}
		\ptial{3} \left( \frac{g_1}{\rho }\right) = g_2, \quad x_1 \in \bbr, \ x_2 = 0,\ x_3 = 0,
	\end{equation*}
	should be satisfied. 
	To solve \eqref{AppendixEqs: Laplace-quarter}, we reduce it to the case $ g_1 = 0 $. Extend $ g_1 \in \W{2 - \frac{1}{q}}(\bbr \times \{0\} \times \bbr_+) $ with respect to $ x_3 $ by the extension
	\begin{equation*}
		\tg_1 (x_1, x_2, x_3) = \left\{
			\begin{aligned}
				& g_1 (x_1, x_2, x_3), && \text{ if } x_3 > 0, \\
				& - g_1 (x_1, x_2, -2 x_3) + 2 g_1 (x_1, x_2, - x_3 / 2), && \text{ if } x_3 < 0.
			\end{aligned}
		\right.
	\end{equation*}
	Then $ \tg_1 \in \W{2 - \frac{1}{q}}(\bbr \times \{0\} \times \bbr) $. 
	Let $ \bar{\rho} \equiv \rho $ in $ \bbr \times \bbr_+ \times \bbr $. Solving
	\begin{equation*}
		\begin{alignedat}{3}
		\lambda \phi - \Delta \phi & = 0, && \quad x_1 \in \bbr,\ x_2 > 0,\ x_3 \in \bbr, \\
		\bar{\rho} \phi & = \tg_1, && \quad x_1 \in \bbr,\ x_2 = 0,\ x_3 \in \bbr, 
		\end{alignedat}
	\end{equation*}
	yields a unique solution $ \bar{\phi} \in \W{2}(\bbr \times \bbr_+ \times \bbr) $ by \cite[Section 6.2]{PS2016}. Then $ (\phi - \bar{\phi}) $ (restricted) solves \eqref{AppendixEqs: Laplace-quarter} with $ g_1 = 0 $ and modified data (not to be relabeled) $ g_2 $ satisfying the compatibility condition $ \rv{g_2}_{x_2 = 0} = 0 $. Extend $ f \in \Lq{q}(\bbr \times \bbr_+^2) $ and $ g_2 \in \W{1 - \frac{1}{q}}(\bbr \times \bbr_+ \times \{0\}) $ to some function $ \tf \in \Lq{q}(\bbr^2 \times \bbr_+) $ and $ \tg_2 \in \W{1 - \frac{1}{q}}(\bbr^2 \times \{0\}) $ by odd reflection. Then we solve the half-space elliptic equation
	\begin{equation*}
		\begin{alignedat}{3}
			\lambda \phi - \Delta \phi & = \tf, && \quad  x_1 \in \bbr,\ x_2 \in \bbr,\ x_3 > 0, \\
			\ptial{3} \phi & = \tg_2, && \quad  x_1 \in \bbr,\ x_2 \in \bbr,\ x_3 = 0, 
		\end{alignedat}
	\end{equation*}
	to obtain a unique solution $ \tilde{\phi} \in \W{2}(\bbr^2 \times \bbr_+) $ by \cite[Section 6.2]{PS2016}. By the symmetry, the function $ - \tilde{\phi}(x_1, - x_2, x_3) $ is also a solution to the above system and the uniqueness implies $ \tilde{\phi}\vert_{x_2 = 0} = 0 $.	
	Then the restricted function $ (\bar{\phi} + \tilde{\phi}) \in \W{2}(\bbr \times \bbr_+^2) $ solves \eqref{AppendixEqs: Laplace-quarter}. The uniqueness follows easily by carrying out the energy estimate. 
	
	Analogously, there is a solution operator with regard to the data and solutions for the elliptic equation in a bent quarter space as in Section \ref{Section: Stokes-Bent-Quarter}.
	
	\textbf{Elliptic transmission problem in a half space.} For a half space
	\begin{equation*}
		\{(x_1, x_2, x_3): x_1 
		\in \bbr, x_2 \in \bbr_+, x_3 \in \bbr\},
	\end{equation*} 
	we consider the problem for $ \lambda > 0 $ large,
	\begin{equation}\label{AppendixEqs: Laplace-transmission-half}
		\begin{alignedat}{3}
			\lambda \phi - \Delta \phi & = f, && \quad x_1 \in \bbr,\ x_2 > 0,\ x_3 \in \dR, \\
			\jump{\rho \phi} & = g_1, && \quad x_1 \in \bbr,\ x_2 > 0,\ x_3 = 0, \\
			\jump{\ptial{3} \phi} & = g_2, && \quad x_1 \in \bbr,\ x_2 > 0,\ x_3 = 0, \\
			\rho \phi & = g_3, && \quad x_1 \in \bbr,\ x_2 = 0,\ x_3 \in \dR.
		\end{alignedat}
	\end{equation}
	The compatibility conditions at the contact line are
	\begin{equation*}
		\jump{g_3} = g_1, \quad
		\jump{\ptial{3} \left( \frac{g_3}{\rho }\right)} = g_2, \quad x_1 \in \bbr, \ x_2 = 0,\ x_3 = 0.
	\end{equation*}
	Same as above, we reduce \eqref{AppendixEqs: Laplace-transmission-half} to the case $ (f, g_3) = 0 $ firstly. Define $ g_3^\pm := \rv{g_3}_{x_3 \gtrless 0} $, $ f^\pm := \rv{f}_{x_3 \gtrless 0} $ and $ \rho^\pm := \rv{\rho}_{x_3 \gtrless 0} $. Extend $ g_3^\pm \in \W{2 - \frac{1}{q}}(\bbr \times \{0\} \times \bbr_\pm) $ to some functions $ \tg_3^\pm \in \W{2 - \frac{1}{q}}(\bbr \times \{0\} \times \bbr) $ and respectively, $ f^\pm \in \Lq{q}(\bbr \times \bbr_+ \times \bbr_\pm) $ to $ \tf^\pm \in \Lq{q}(\bbr \times \bbr_+ \times \bbr) $ by the extension above, $ \rho^\pm $ to $ \tilde{\rho}^\pm $ by constant.
	Solving the half-space elliptic equation
	\begin{equation*}
		\begin{alignedat}{3}
			\lambda \phi - \Delta \phi & = \tf^+, && \quad x_1 \in \bbr,\ x_2 > 0,\ x_3 \in \bbr, \\
			\tilde{\rho}^+ \phi & = \tg_3^+, && \quad x_1 \in \bbr,\ x_2 = 0,\ x_3 \in \bbr, 
		\end{alignedat}
	\end{equation*}
	yields a unique solution $ \bar{\phi}^+ \in \W{2}(\bbr \times \bbr_+ \times \bbr) $ by \cite[Section 6.2]{PS2016}. Replacing the superscript ``$ + $'' above by ``$ - $'', one obtains a unique solution $ \bar{\phi}^- \in \W{2}(\bbr \times \bbr_+ \times \bbr) $. Define
	\begin{equation*}
		\bar{\phi} := \left\{
			\begin{aligned}
				& \bar{\phi}^+, && \text{ if } x_3 > 0, \\
				& \bar{\phi}^-, && \text{ if } x_3 < 0.
			\end{aligned}
		\right.
	\end{equation*}
	Then $ (\phi - \bar{\phi}) $ (restricted) solves \eqref{AppendixEqs: Laplace-transmission-half} with $ (f, g_3) = 0 $ and modified data (not to be relabeled) $ g_1, g_2 $ satisfying the compatibility condition $ \rv{g_1}_{x_2 = 0} = \rv{g_2}_{x_2 = 0} = 0 $. Hence one can extend $ g_1 \in \W{2 - \frac{1}{q}}(\bbr \times \bbr_+ \times \{0\}) $ to some function $ \tg_1 \in \W{2 - \frac{1}{q}}(\bbr^2 \times \{0\}) $ and $ g_2 \in \W{1 - \frac{1}{q}}(\bbr \times \bbr_+ \times \{0\}) $ to some function $ \tg_2 \in \W{1 - \frac{1}{q}}(\bbr^2 \times \{0\}) $ by odd reflection. Then we solve the full-space elliptic transmission problem with a flat interface
	\begin{equation*}
		\begin{alignedat}{3}
			\lambda \phi - \Delta \phi & = 0, && \quad x_1 \in \bbr,\ x_2 \in \bbr,\ x_3 \in \dR, \\
			\jump{\rho \phi} & = \tg_1, && \quad x_1 \in \bbr,\ x_2 \in \bbr,\ x_3 = 0, \\
			\jump{\ptial{3} \phi} & = \tg_2, && \quad x_1 \in \bbr,\ x_2 \in \bbr,\ x_3 = 0, 
		\end{alignedat}
	\end{equation*}
	to obtain a unique solution $ \tilde{\phi} \in \W{2}(\bbr^2 \times \dR) $ by \cite[Section 6.5]{PS2016}.
	By the symmetry, the function $ - \tilde{\phi}(x_1, - x_2, x_3) $ is also a solution to the above system. Then the uniqueness implies $ \tilde{\phi}\vert_{x_2 = 0} = 0 $.
	Consequently, the restricted function $ (\bar{\phi} + \tilde{\phi}) \in \W{2}(\bbr \times \bbr_+ \times \dR) $ solves \eqref{AppendixEqs: Laplace-transmission-half}.
	As in Section \ref{Section: TwoPhase-Bent-halfspace}, one can conclude that there is also a solution operator with regard to the data and solutions for the elliptic transmission problem in a half space with a bent interface.
	
	Now following the procedure in Section \ref{Subsection: Localization procedure}, one can complete the proof analogously, with the help of Proposition \ref{AppendixPropositon: POU-Omega}. 
	
	For the case $ s = 1 $, the higher-order regularity case, we refer to e.g. Section 6.3.5, Section 6.5.3 in \cite{PS2016} for the models problems in a higher-order regularity class. Then proceeding a similar localization argument gives us the desired higher regularity.
\end{proof}

Based on Theorem \ref{Appendix: Elliptic transmission}, we have the solvability and regularity for the Laplace transmission problem.
\begin{equation} \label{Appendix: Laplace transmission}
	\begin{alignedat}{3}
		- \Delta \phi & = f, && \tin \OM \backslash \Sigma, \\
		\jump{\rho \phi} & = 0, && \ton \Sigma, \\
		\jump{\ptial{\nuSigma} \phi} & = 0, && \ton \Sigma, \\
		\rho \phi & = 0, && \ton G \backslash \partial \Sigma, \\
		\ptial{\nu_S}\phi & = 0, && \ton S.
	\end{alignedat}
\end{equation}
\begin{theorem} \label{AppendixTheorem: Laplace}
	Under the assumption of Theorem \ref{Appendix: Elliptic transmission}, \eqref{Appendix: Laplace transmission} admits a unique solution $ \phi \in \W{2 + s }(\OM \backslash \Sigma) $ if and only if $ f \in \W{s}(\OM \backslash \Sigma) $.
\end{theorem}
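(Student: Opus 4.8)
The plan is to recognize \eqref{Appendix: Laplace transmission} as the homogeneous-data version of \eqref{Appendix: Elliptic transmission} at the spectral value $\lambda = 0$, and to pass from the large-$\lambda$ invertibility granted by Theorem \ref{AppendixTheorem: Elliptic} down to $\lambda = 0$ by a compactness argument. Necessity is immediate: if $\phi \in \W{2+s}(\OM \backslash \Sigma)$ solves \eqref{Appendix: Laplace transmission}, then $f = - \Delta \phi \in \W{s}(\OM \backslash \Sigma)$. For sufficiency and uniqueness I would introduce the realization $A_s$ of $- \Delta$ on $X_s := \W{s}(\OM \backslash \Sigma)$ with domain $D(A_s)$ consisting of those $\phi \in \W{2+s}(\OM \backslash \Sigma)$ satisfying the homogeneous transmission conditions $\jump{\rho \phi} = \jump{\ptial{\nuSigma} \phi} = 0$ on $\Sigma$, the Dirichlet condition $\rho \phi = 0$ on $G \backslash \partial \Sigma$, and the Neumann condition $\ptial{\nu_S} \phi = 0$ on $S$.

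By Theorem \ref{AppendixTheorem: Elliptic} applied with vanishing data $g_1 = \dots = g_4 = 0$, the operator $\lambda + A_s : D(A_s) \to X_s$ is an isomorphism for every $\lambda > \lambda_0$; in particular the resolvent set of $-A_s$ is nonempty. Since $\OM$ is bounded, the embedding of $D(A_s) \subset \W{2+s}(\OM \backslash \Sigma)$ into $X_s = \W{s}(\OM \backslash \Sigma)$ is compact by Rellich--Kondrachov, so $(\lambda + A_s)^{-1}$ is a compact operator on $X_s$. Consequently $A_s$ has compact resolvent, its spectrum is a discrete set of eigenvalues, and by Riesz--Schauder theory $0$ belongs to the resolvent set of $-A_s$ --- which is exactly the assertion that \eqref{Appendix: Laplace transmission} is uniquely solvable for all $f \in X_s$ --- if and only if $\ker A_s = \{0\}$. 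Thus everything reduces to proving injectivity.

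The injectivity is the crux. Given $\phi \in \ker A_s$, i.e.\ $- \Delta \phi = 0$ in $\OM \backslash \Sigma$ with the homogeneous boundary and transmission data, I would test the equation with the piecewise-constant weighted function $\rho \phi$ and integrate by parts separately on $\OM^+$ and $\OM^-$ (legitimate since $\phi \in \W{2}(\OM \backslash \Sigma)$ and $q \geq 2$). This produces $\int_{\OM \backslash \Sigma} \rho \abs{\nabla \phi}^2 \, \d x$ together with boundary terms over $\Sigma$, $G$, and $S$. The term on $S$ vanishes by $\ptial{\nu_S} \phi = 0$, and the term on $G$ vanishes because $\rho \phi = 0$ there forces $\phi = 0$. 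On $\Sigma$ the two one-sided contributions combine, after using $\jump{\ptial{\nuSigma} \phi} = 0$ to identify the common normal derivative, into $\int_\Sigma \ptial{\nuSigma} \phi \, \jump{\rho \phi} \, \d \sigma$ (up to sign), which vanishes by $\jump{\rho \phi} = 0$. Hence $\int_{\OM \backslash \Sigma} \rho \abs{\nabla \phi}^2 \, \d x = 0$, and since $\rho > 0$ the field $\phi$ is constant on each connected component. Because both $\OM^+$ and $\OM^-$ meet $G$, where $\phi = 0$, I conclude $\phi \equiv 0$.

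The main obstacle is precisely this $\Sigma$-cancellation: with the genuinely weighted transmission condition $\jump{\rho \phi} = 0$ (rather than $\jump{\phi} = 0$), the trace of $\phi$ itself jumps across $\Sigma$ when $\rho$ differs on the two sides, so the naive energy test with $\phi$ fails and one must use the weight $\rho \phi$ to make the interface terms telescope. A secondary point to verify is that $\ker A_s$ is independent of $s$: any kernel element lies in $\W{2}(\OM \backslash \Sigma)$ and, by the elliptic regularity already encoded in Theorem \ref{AppendixTheorem: Elliptic}, is as smooth as the geometry permits, so the energy identity applies uniformly for $s \in \{0,1\}$. With injectivity established, the Fredholm alternative closes the argument and delivers the stated equivalence.
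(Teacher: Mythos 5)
Your proof is correct, but it takes a genuinely different route from the paper. The paper proves existence by a Lax--Milgram argument: it produces a weak solution in the $L^2$-based space $H^1_G(\Omega)$ of functions vanishing on $G$, rewrites the problem as $\lambda\phi-\Delta\phi=f+\lambda\phi$, and then bootstraps the regularity up to $W^{2+s}_q(\Omega\setminus\Sigma)$ through Sobolev embeddings ($H^1\hookrightarrow L^6$ in 3D, with a case distinction $q\le 6$ versus $q>6$) combined with Theorem \ref{AppendixTheorem: Elliptic}. You instead stay entirely in the $L^q$ scale: you use the large-$\lambda$ isomorphism from Theorem \ref{AppendixTheorem: Elliptic} together with the compact embedding $W^{2+s}_q\hookrightarrow W^s_q$ on the bounded domain to conclude that the realization $A_s$ has compact resolvent and is Fredholm of index zero, so that solvability at $\lambda=0$ reduces to injectivity, which you then settle by an energy identity. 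Both arguments ultimately rest on the same two ingredients (the resolvent estimate for large $\lambda$ and a coercivity/energy computation on $H^1_G$), but your version buys a cleaner treatment of the genuinely weighted transmission condition $\jump{\rho\phi}=0$: testing with $\rho\phi$ makes the interface terms telescope even when $\rho$ differs across $\Sigma$, whereas the paper's weak formulation on $H^1_G(\Omega)$ implicitly assumes $\phi$ does not jump. The paper's version, on the other hand, avoids invoking spectral theory and gives the solution more constructively. One small point worth making explicit in your writeup is that every connected component of $\Omega\setminus\Sigma$ meets $G$ (which holds here since $\partial\Omega^-\supset G_0^-\times\{L_j\}$ and $\partial\Omega^+\supset G_0^+\times\{L_j\}$), as this is what lets you pass from ``$\phi$ locally constant'' to $\phi\equiv 0$.
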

\begin{proof}
	For $ s = 0 $, let $ H^k := W_2^k $ and $ H_K^k := W_{2,K}^k $ be the Sobolev spaces, where 
	\begin{equation*}
		W_{q,K}^k(\OM)^3 = \left\{ \bu \in W_q^k(\OM)^3: \rv{\bu}_{K} = 0, \quad K \in \{G, S\} \right\}.
	\end{equation*}
	Testing \eqref{Appendix: Laplace transmission} with a function $ \psi \in H_G^1(\OM) $, one obtains
	\begin{equation*}
		\int_{\OM} \nabla \phi \cdot \nabla \psi \d x
		= \int_{\OM} f \psi \d x.
	\end{equation*}
	Then by Lax--Milgram Theorem, we know that there is a unique weak solution $ \phi \in H_G^1(\OM) $ of \eqref{Appendix: Laplace transmission}. Equivalently, $ \phi $ solves
	\begin{equation} \label{Appendix: Laplace transmission-lambda}
		\begin{alignedat}{3}
			\lambda \phi - \Delta \phi & = f + \lambda \phi =: \tf, && \tin \OM \backslash \Sigma, \\
			\jump{\rho \phi} & = 0, && \ton \Sigma, \\
			\jump{\ptial{\nuSigma} \phi} & = 0, && \ton \Sigma, \\
			\rho \phi & = 0, && \ton G \backslash \partial \Sigma, \\
			\ptial{\nu_S}\phi & = 0, && \ton S.
		\end{alignedat}
	\end{equation}
	weakly. From Sobolev embeddings, we have $ H^1(\OM) \hookrightarrow \Lq{6}(\OM) $ in three dimensional case. Then $ \tf \in \Lq{p}(\OM) $ for $ p = \min(q, 6) $. If $ q \leq 6 $, then \eqref{Appendix: Laplace transmission-lambda} can be solved uniquely in $ \W{2}(\OM \backslash \Sigma) \cap H_G^1(\OM) $ by Theorem \ref{AppendixTheorem: Elliptic}. Otherwise if $ q > 6 $, one obtains $ \phi \in W_{6}^2(\OM \backslash \Sigma) \cap W_{6,G}^1(\OM) \hookrightarrow \Lq{p}(\OM) $ for any $ p \geq 1 $. Consequently, $ \tf \in \Lq{q}(\OM) $, which means $ \phi \in W_{q}^2(\OM \backslash \Sigma) \cap W_{q,G}^1(\OM) $ by Theorem \ref{AppendixTheorem: Elliptic}.
	
	For $ s = 1 $, one can proceed the bootstrap argument from $ s = 0 $, with the help of higher regularity in Theorem \ref{AppendixTheorem: Elliptic}. Namely, we can promote the regularity for $ \tf $ in \eqref{Appendix: Laplace transmission-lambda} up tp $ \W{1}(\OM \backslash \Sigma) $ and get the desired regularity by Theorem \ref{AppendixTheorem: Elliptic} with $ s = 1 $.
\end{proof}

\begin{remark}
	Note that in Wilke \cite[Lemma 5.6]{Wilke2020}, a mean value free condition was imposed for $ f $, while in the present paper, we do not have this one. This is due to the face that if we integrate \eqref{Appendix: Laplace transmission} over $ \OM $, we will obtain
	\begin{equation*}
		\int_\OM f \d x = - \int_G \ptial{\nu_G} \phi \d \sigma,
	\end{equation*}
	which is not vanished since on $ G \backslash \partial \Sigma $, $ \rho \phi = 0 $ instead of $ \ptial{\nu_G} \phi = 0 $ as in \cite{Wilke2020}.
\end{remark}

\subsection{Parabolic transmission problems}
Consider the parabolic transmission problem with mixed boundary conditions
\begin{equation} \label{Appendix: Parabolic transmission}
	\begin{alignedat}{3}
		\rho \pt u - \mu \Delta u & = f, && \tin \OM \backslash \Sigma \times J, \\
		\jump{u} & = g_1, && \ton \Sigma \times J, \\
		\jump{\mu (\nabla u + \tran{\nabla} u) \nuSigma} & = g_2, && \ton \Sigma \times J, \\
		\tran{(u_1, u_3)} & = g_3, && \ton G \backslash \partial \Sigma \times J, \\
		2 \mu \ptial{2} u_2 & = g_4, && \ton G \backslash \partial \Sigma \times J, \\
		u & = g_5, && \ton S \times J, \\
		u(0) & = u_0, && \tin \OM \backslash \Sigma.
	\end{alignedat}
\end{equation}
Then one obtains the following theorem.
\begin{theorem} \label{AppendixTheorem: Parabolic}
	Let $ q > 3 $, $ \rho, \mu > 0 $, $ T > 0 $ and $ J = (0,T) $. Assume that $ \OM $, $ \Sigma $, $ G $, $ S $ are defined as above and $ \Sigma $, $ G $, $ S $ are of class $ C^3 $, as well as $ \partial G $ of $ C^4 $. Then \eqref{Appendix: Parabolic transmission} admits a unique solution 
	\begin{gather*}
		u \in \W{1}(J; \Lq{q}(\OM)^3) \cap \Lq{q}(J; \W{2}(\OM \backslash \Sigma)^3),
	\end{gather*}
	if and only if the data satisfy the following regularity and compatibility conditions:
	\begin{enumerate}
		\item $ f \in \Lq{q}(J; \Lq{q}(\OM)^3) $,
		\item $ g_1 \in \W{1 - \frac{1}{2q}}(J; \Lq{q}(\Sigma)^3) \cap \Lq{q}(J; \W{2 - \frac{1}{q}}(\Sigma)^3) $,
		\item $ g_2 \in \W{\onehalf - \frac{1}{2q}}(J; \Lq{q}(\Sigma)^3) \cap \Lq{q}(J; \W{1 - \frac{1}{q}}(\Sigma)^3) $,
		\item $ g_3 \in \W{1 - \frac{1}{2q}}(J; \Lq{q}(G)^2) \cap \Lq{q}(J; \W{2 - \frac{1}{q}}(G \backslash \partial \Sigma)^2) $,
		\item $ g_4 \in \W{\onehalf - \frac{1}{2q}}(J; \Lq{q}(G)) \cap \Lq{q}(J; \W{1 - \frac{1}{q}}(G \backslash \partial \Sigma)) $,
		\item $ g_5 \in \W{1 - \frac{1}{2q}}(J; \Lq{q}(S)^3) \cap \Lq{q}(J; \W{2 - \frac{1}{q}}(S)^3) $,
		\item $ u_0 \in \W{2 - \frac{2}{q}}(\OM \backslash \Sigma)^3 $, $ \jump{u_0} = \rv{g_1}_{t = 0} $, $ \cP_\Sigma( \jump{\nabla u_0 + \tran{\nabla} u_0} \nuSigma ) = \rv{\cP_\Sigma(g_2)}_{t = 0} $,
		\item $ \tran{((u_0)_1, (u_0)_3)}|_G = \rv{g_3}_{t = 0} $, $ \rv{2 \mu \ptial{2}(u_0)_2}_{G} = \rv{g_4}_{t = 0} $, $ \rv{u_0}_S = \rv{g_5}_{t = 0} $,
		\item $ \jump{g_3} = \tran{((g_1)_1, (g_1)_3)} $, at $ \partial \Sigma $, 
		\item $ (g_2)_1 = \jump{2 \mu \ptial{1} (g_3)_1 \nuSigma \cdot e_1 + \mu (\ptial{1} (g_3)_2 + \ptial{3} (g_3)_1) \nuSigma \cdot e_3} $ at $ \partial \Sigma $,
		\item $ (g_2)_3 = \jump{2 \mu \ptial{3} (g_3)_2 \nuSigma \cdot e_3 + \mu (\ptial{1} (g_3)_2 + \ptial{3} (g_3)_1) \nuSigma \cdot e_1} $, at $ \partial \Sigma $,
		\item $ g_3 = \tran{((g_5)_1, (g_5)_3)} $, $ g_4 = 2 \mu \ptial{2}(g_5)_2 $, at $ \partial S $.
	\end{enumerate}
\end{theorem}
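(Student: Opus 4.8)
The plan is to follow the localization strategy already used for Theorem~\ref{AppendixTheorem: Elliptic} and for the two-phase Stokes system in Section~\ref{Subsection: Localization procedure}, now applied to a purely parabolic system that is coupled solely through its interface and boundary conditions. The necessity of the stated regularity and compatibility conditions follows in the usual way from trace theory and the mapping properties of the temporal and spatial trace operators, so I would concentrate on sufficiency. As a first step I would reduce to homogeneous data: extending $u_0$ to some $\tu_0 \in \W{2-2/q}(\bbr^3)^3$ and $f$ to $\Lq{q}(J;\Lq{q}(\bbr^3)^3)$, solving the full-space heat equation $\rho\pt u-\mu\Delta u=f$ with $u(0)=\tu_0$, and subtracting its restriction, I can arrange $(u_0,f)=0$ while modifying $g_1,\dots,g_5$ within their regularity classes so that all temporal traces at $t=0$ vanish and the contact-line conditions (items (8)--(12) of the theorem) are preserved.

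For the localization I would cover $\partial\Sigma$ and $\partial S$ by finitely many charts as in Proposition~\ref{AppendixPropositon: POU-Omega} and solve the corresponding model problems. The genuinely new building blocks are the half-space problem carrying the mixed boundary conditions on $G$ (Dirichlet for $u_1,u_3$ and Neumann for $u_2$), the quarter-space problem near the fixed contact line $\partial S$, and the two-phase half-space problem near the moving contact line $\partial\Sigma$, together with their bent variants. Each is solved by the reflection technique enabled by the ninety-degree contact angle: across $G=\{x_2=0\}$ I would extend $u_1,u_3$ by odd reflection and $u_2$ by even reflection (matching the Dirichlet/Neumann split on $G$), and across $\Sigma=\{x_3=0\}$ I would use the even/odd pattern dictated by the transmission conditions, exactly as for the velocity field in Theorems~\ref{Theorem: MP-Stokes-quarterspace} and~\ref{Theorem: MP-Twophase-halfspace}. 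After reflection each model problem reduces to a full-space or standard half-space heat, respectively two-phase heat, problem, whose solvability is provided by Pr\"uss--Simonett~\cite{PS2016} and Bothe--Pr\"uss~\cite{BP2007}; uniqueness follows from an energy estimate after reflecting a homogeneous solution to the full space.

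For the bent charts I would flatten $\Stheta$ and $\Sigtheta$ by the change of variables $\barx_3=x_3-\theta(x')$ from Sections~\ref{subsubsection:transform-quarter} and~\ref{Section: TwoPhase-Bent-halfspace}, turning the flat model problem into a perturbation whose lower-order terms $M_j(\theta,\baru)$ are controlled by $\norm{\nabla\theta}_\infty\le\eta$ and by a factor $T^{1/2q}$ coming from the embedding $\WO{1}(J;\Lq{q}(\OM))\cap\Lq{q}(J;\W{2}(\OM))\hookrightarrow\WO{1/2}(J;\W{1}(\OM))\hookrightarrow\Lq{2q}(J;\W{1}(\OM))$. A Neumann series argument then produces the local solution operators; these I expect to be \emph{simpler} than in the Stokes case, since the absence of pressure and of the divergence constraint removes the $\dW{-1}$-in-time bookkeeping and the auxiliary elliptic reductions. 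Finally, multiplying the equation by the partition of unity $\{\vp_k\}$ generates commutator terms $[\Delta,\vp_k]u$ that carry extra time regularity; summing the local solutions and absorbing these terms for $T$ small yields a left and a right inverse for the full operator, hence well-posedness.

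The main difficulty, as in Wilke~\cite{Wilke2020}, will be the moving contact line $\partial\Sigma$: the reflection across $G$ and the flattening of $\Sigtheta$ must be compatible with the transmission compatibility conditions (8)--(11), yet the tangential stress jump produced by the transformation does \emph{not} vanish at $\partial\Sigma$ in general. I would resolve this precisely as in Section~\ref{Bent TwoPhaseStokes: Existence and uniqueness}, by subtracting a suitable extension, via the operator $\ext_\Sigma$ of \cite[Proposition~5.1]{Wilke2020}, of the offending trace, so that the modified perturbation lies in the data space and still obeys the contact-line constraints. Checking that the reduced boundary data retain the conditions (8)--(12) after every reduction, reflection and Neumann-series step is the bookkeeping-heavy core of the argument; everything else is a routine adaptation of the estimates already carried out for the two-phase Stokes problem in Section~\ref{Subsection: Localization procedure}.
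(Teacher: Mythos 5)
Your proposal follows essentially the same route as the paper: reduction to homogeneous data, localization via Proposition \ref{AppendixPropositon: POU-Omega}, reflection arguments for the quarter-space and half-space transmission model problems (with the odd/even split of components dictated by the mixed conditions on $G$), flattening plus a Neumann series for the bent charts, and the $\ext_\Sigma$-correction at the moving contact line, all assembled by the partition-of-unity argument. The only item you omit is the fourth model problem — the transmission problem in a smooth bounded domain with Dirichlet boundary, which the paper handles by a cutoff combining a Neumann-boundary transmission problem with a Dirichlet heat problem — but this is routine and does not affect the soundness of your plan.
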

\begin{proof}
	Again, we are going to proof the theorem by localization procedure. To this end, one gives four model problems with respect to \eqref{Appendix: Parabolic transmission}. Namely,
	\begin{itemize}
		\item The half-space heat equation with outflow boundary condition
		\item The quarter-space heat equation with outflow and Dirichlet boundary conditions
		\item The half-space heat transmission problem with a flat interface and Dirichlet boundary condition
		\item The heat transmission problem in smooth domain with a flat interface and Dirichlet boundary condition.
	\end{itemize}
	The first one has been solved, readers are referred to \cite{PS2016}. We focus on the rest ones.
	
	\textbf{Heat equation in a quarter space.} For a quarter space $ \{(x_1, x_2, x_3): x_1 \in \bbr, x_2 \in \bbr_+, x_3 \in \bbr_+\} $, we consider the problem
	\begin{equation}\label{AppendixEqs: Heat-quarter}
		\begin{alignedat}{3}
			\rho \pt u - \mu \Delta u & = f, && \tin \bbr \times \bbr_+ \times \bbr_+ \times J, \\
			\tran{(u_1, u_3)} & = g_1, && \ton \bbr \times \{0\} \times \bbr_+ \times J, \\
			2 \mu \ptial{2} u_2 & = g_2, && \ton \bbr \times \{0\} \times \bbr_+ \times J, \\
			u & = g_3, && \ton \bbr \times \bbr_+ \times\{0\}  \times J, \\
			u(0) & = u_0, && \tin \bbr \times \bbr_+ \times \bbr_+,
		\end{alignedat}
	\end{equation}
	where $ \rho, \mu > 0 $, $ u : \bbr^3 \times [0, T] \rightarrow \bbr^3 $ is the unknown quantity. The data $ g_i $, $ i = 1,2,3 $ satisfy the initial compatibility conditions
	\begin{equation*}
		\rv{\tran{((u_0)_1, (u_0)_3)}}_{x_2 = 0} = \rv{g_1}_{t = 0}, \quad 
		\rv{2 \mu \ptial{2} (u_0)_2}_{x_2 = 0} = \rv{g_2}_{t = 0}, \quad 
		\rv{u_0}_{x_3 = 0} = \rv{g_3}_{t = 0},
	\end{equation*}
	and compatibility condition at the contact line $ \bbr \times \{0\} \times \{0\} $
	\begin{equation*}
		\tran{((g_3)_1, (g_3)_3)} = g_1, \quad 2 \mu \ptial{2} (g_3)_2 = g_2, \quad x_1 \in \bbr, \ x_2 = 0,\ x_3 = 0.
	\end{equation*}
	To solve \eqref{AppendixEqs: Heat-quarter}, we reduce it to the case $ (u_0, f, g_3) = 0 $. Extend 
	\begin{gather*}
		u_0 \in \W{2 - \frac{2}{q}}(\bbr \times \bbr_+^2)^3, \quad
		f \in \Lq{q}(J; \Lq{q}(\bbr \times \bbr_+^2)^3), \\
		g_3 \in \W{1 - \frac{1}{2q}}(J; \Lq{q}(\bbr \times \bbr_+ \times \{0\})^3) \cap \Lq{q}(J; \W{2 - \frac{1}{q}}(\bbr \times \bbr_+ \times \{0\})^3)
	\end{gather*}
	with respect to $ x_2 $ by the similar extension as \eqref{Extension:Sobolev}
	to some functions
	\begin{gather*}
		\tu_0 \in \W{2 - \frac{2}{q}}(\bbr^2 \times \bbr_+)^3, \quad
		\tf \in \Lq{q}(J; \Lq{q}(\bbr^2 \times \bbr_+)^3), \\
		\tg_3 \in \W{1 - \frac{1}{2q}}(J; \Lq{q}(\bbr^2 \times \{0\})^3) \cap \Lq{q}(J; \W{2 - \frac{1}{q}}(\bbr^2 \times \{0\})^3).
	\end{gather*}
	Solving
	\begin{equation*}
		\begin{alignedat}{3}
			\rho \pt u - \mu \Delta u & = \tf, && \tin \bbr \times \bbr \times \bbr_+ \times J, \\
			u & = \tg_3, && \ton \bbr \times \bbr \times\{0\}  \times J, \\
			u(0) & = \tu_0, && \tin \bbr \times \bbr \times \bbr_+,
		\end{alignedat}
	\end{equation*}
	yields a unique solution 
	\begin{gather*}
		\bar{u} \in \W{1}(J; \Lq{q}(\bbr^2 \times \bbr_+)^3) \cap \Lq{q}(J; \W{2}(\bbr^2 \times \bbr_+)^3),
	\end{gather*}
	by \cite[Section 6.2]{PS2016}. Then $ (u - \bar{u}) $ (restricted) solves \eqref{AppendixEqs: Heat-quarter} with $ (u_0, f, g_3) = 0 $ and modified data $ g_1 $, $ g_2 $ (not to be relabeled) having vanishing trace at $ t = 0 $ and satisfying $ \rv{g_1}_{x_3 = 0} = 0 $ and $ \rv{g_2}_{x_3 = 0} = 0 $. Now by odd reflection	with regard to $ x_3 $, one can extend 
	\begin{gather*}
		g_1 \in \WO{1 - \frac{1}{2q}}(J; \Lq{q}(\bbr \times \{0\} \times \bbr_+)^2) \cap \Lq{q}(J; \W{2 - \frac{1}{q}}(\bbr \times \{0\} \times \bbr_+)^2), \\
		g_2 \in \WO{\onehalf - \frac{1}{2q}}(J; \Lq{q}(\bbr \times \{0\} \times \bbr_+)) \cap \Lq{q}(J; \W{1 - \frac{1}{q}}(\bbr \times \{0\} \times \bbr_+)),
	\end{gather*}
	to some functions
	\begin{gather*}
		\tg_1 \in \WO{1 - \frac{1}{2q}}(J; \Lq{q}(\bbr \times \{0\} \times \bbr)^2) \cap \Lq{q}(J; \W{2 - \frac{1}{q}}(\bbr \times \{0\} \times \bbr)^2), \\
		\tg_2 \in \WO{\onehalf - \frac{1}{2q}}(J; \Lq{q}(\bbr \times \{0\} \times \bbr)) \cap \Lq{q}(J; \W{1 - \frac{1}{q}}(\bbr \times \{0\} \times \bbr)).
	\end{gather*}
	Then we solve the half-space heat equation
	\begin{equation} \label{AppendixEqs: Heat-quarter-half}
		\begin{alignedat}{3}
			\rho \pt u - \mu \Delta u & = 0, && \tin \bbr \times \bbr_+ \times \bbr \times J, \\
			\tran{(u_1, u_3)} & = \tg_1, && \ton \bbr \times \{0\} \times \bbr \times J, \\
			2 \mu \ptial{2} u_2 & = \tg_2, && \ton \bbr \times \{0\} \times \bbr \times J, \\
			u(0) & = 0, && \tin \bbr \times \bbr_+ \times \bbr, 
		\end{alignedat}
	\end{equation}
	to obtain a unique solution 
	\begin{gather*}
		\tu \in \WO{1}(J; \Lq{q}(\bbr \times \bbr_+ \times \bbr)^3) \cap \Lq{q}(J; \W{2}(\bbr \times \bbr_+ \times \bbr)^3).
	\end{gather*}
	We remark here that \eqref{AppendixEqs: Heat-quarter-half} is actually assembled by two separated heat equations with Dirichlet boundary condition and Neumann boundary condition respectively. Namely,
	\begin{equation*} 
		\begin{alignedat}{5}
			\rho \pt \tran{(u_1, u_3)} - \mu \Delta \tran{(u_1, u_3)} & = 0, \quad & \rho \pt u_2 - \mu \Delta u_2 & = 0, && \tin \bbr \times \bbr_+ \times \bbr \times J, \\
			\tran{(u_1, u_3)} & = \tg_1, \quad & 2 \mu \ptial{2} u_2 & = \tg_2, && \ton \bbr \times \{0\} \times \bbr \times J, \\
			\tran{(u_1, u_3)}(0) & = 0, \quad & u_2(0) & = 0 && \tin \bbr \times \bbr_+ \times \bbr, 
		\end{alignedat}
	\end{equation*}
	Hence \eqref{AppendixEqs: Heat-quarter-half} is uniquely solved by \cite[Section 6.2]{PS2016}. By the symmetry, one knows that 
	\begin{equation*}
		\baru(t, x_1, x_2, x_3)
		:= \tran{(- \tu_1 (t, x_1, x_2, - x_3), - \tu_2 (t, x_1, x_2, - x_3), - \tu_3 (t, x_1, x_2, - x_3))}
	\end{equation*}
	is also a solution to \eqref{AppendixEqs: Heat-quarter-half}. Then the uniqueness implies that $ \baru (t, x_1, x_2, x_3) = \tu (t, x_1, x_2, x_3) $, i.e., $ \tu (t, x_1, x_2, 0) = 0 $. Consequently, the restricted function 
	\begin{equation*}
		(\bar{u} + \tu) \in \W{1}(J; \Lq{q}(\bbr \times \bbr_+^2)^3) \cap \Lq{q}(J; \W{2}(\bbr \times \bbr_+^2)^3)
	\end{equation*}
	solves \eqref{AppendixEqs: Heat-quarter} with compatibility condition at the contact line.
	
	Analogously, there is a solution operator with regard to the data and solutions for the Heat equation in a bent quarter space as in Section \ref{Section: Stokes-Bent-Quarter}.
	
	\textbf{Heat transmission problem in a half space.} For a half space $ \bbr \times \bbr_+ \times \bbr $,
	we consider the problem
	\begin{equation}\label{AppendixEqs: Heat-transmission-half}
		\begin{alignedat}{3}
			\rho \pt u - \mu \Delta u & = f, && \tin \bbr \times \bbr_+ \times \dR \times J, \\
			\jump{u} & = g_1, && \ton \bbr \times \bbr_+ \times \{0\} \times J, \\
			\jump{\mu(\ptial{3} u + \nabla u_3)} & = g_2, && \ton \bbr \times \bbr_+ \times \{0\} \times J, \\
			\tran{(u_1, u_3)} & = g_3, && \ton \bbr \times \{0\} \times \dR \times J, \\
			2 \mu \ptial{2} u_2 & = g_4, && \ton \bbr \times \{0\} \times \dR \times J, \\
			u(0) & = u_0, && \tin \bbr \times \bbr_+ \times \dR.
		\end{alignedat}
	\end{equation}
	Then the data $ (u_0, g_j) $, $ j = 1,...,4 $, satisfy the corresponding initial compatibility condition
	\begin{gather*}
		\jump{u_0} = \rv{g_1}_{t = 0}, \quad
		\jump{\ptial{3} u_0 + \nabla (u_0)_3} = \rv{g_2}_{t = 0}, \\
		\rv{\tran{((u_0)_1, (u_0)_3)}}_{x_2 = 0} = \rv{g_3}_{t = 0}, \quad 
		\rv{2 \mu \ptial{2} (u_0)_2}_{x_2 = 0} = \rv{g_4}_{t = 0},
	\end{gather*}
	and compatibility conditions at the contact line $ \{x \in \bbr^3: x_2 = 0, x_3 = 0\} $
	\begin{gather*}
		\tran{((g_1)_1, (g_1)_3)} = \jump{g_3}, \quad
		2 \ptial{2} (g_1)_2 = \jump{\frac{g_4}{\mu}}, \\
		(g_2)_1 = \jump{\mu (\ptial{3} (g_3)_1 + \ptial{1} (g_3)_2)}, \quad
		(g_2)_3 = \jump{2 \mu \ptial{3} (g_3)_2}.
	\end{gather*}
	Following the argument in Section \ref{Section: Twophase-halfspace}, one can easily reduce \eqref{AppendixEqs: Heat-transmission-half} to the case $ (u_0, f, g_3, g_4) = 0 $ with modified data (not to be relabeled) having vanishing trace at $ t = 0 $ and satisfying 
	\begin{equation*}
		\rv{\tran{((g_1)_1, (g_1)_3)}}_{x_2 = 0} = 0, \quad \rv{\ptial{2} (g_2)_2}_{x_2 = 0} = 0, \quad
		\rv{(g_2)_1}_{x_2 = 0} = 0, \quad
		\rv{(g_2)_3}_{x_2 = 0} = 0,
	\end{equation*}
	Subsequently, one can extend $ ((g_1)_1, (g_1)_3, (g_2)_1, (g_2)_3 ) $ and $ ((g_1)_2, (g_2)_2 ) $ by odd and even reflection respectively	to some functions
	\begin{gather*}
		\tg_1 \in \WO{1 - \frac{1}{2q}}(J; \Lq{q}(\bbr^2 \times \{0\})^3) \cap \Lq{q}(J; \W{2 - \frac{1}{q}}(\bbr^2 \times \{0\})^3), \\
		\tg_2 \in \WO{\onehalf - \frac{1}{2q}}(J; \Lq{q}(\bbr^2 \times \{0\})^3) \cap \Lq{q}(J; \W{1 - \frac{1}{q}}(\bbr^2 \times \{0\})^3).
	\end{gather*}
	Then we solve the full-space heat transmission problem with a flat interface
	\begin{equation*}
		\begin{alignedat}{3}
			\rho \pt u - \mu \Delta u & = 0, && \ton \bbr \times \bbr \times \dR \times J, \\
			\jump{u} & = \tg_1, && \ton \bbr \times \bbr \times \{0\} \times J, \\
			\jump{\mu(\ptial{3} u + \nabla u_3)} & = \tg_2, && \ton \bbr \times \bbr \times \{0\} \times J, \\
			u(0) & = 0, && \ton \bbr \times \bbr \times \dR, \\
		\end{alignedat}
	\end{equation*}
	to obtain a unique solution 
	\begin{gather*}
		\tu \in \WO{1}(J; \Lq{q}(\bbr^2 \times \bbr)^3) \cap \Lq{q}(J; \W{2}(\bbr^2 \times \dR)^3).
	\end{gather*}
	with the help of \cite[Section 6.5]{PS2016}. Again by symmetry, we conclude that 
	\begin{equation*}
		\hatu(t, x_1, x_2, x_3)
		:= \tran{(- \tu_1 (t, x_1, x_2, - x_3), \tu_2 (t, x_1, x_2, - x_3), - \tu_3 (t, x_1, x_2, - x_3))}
	\end{equation*}
	is a solution to \eqref{AppendixEqs: Heat-transmission-half} as well. From the uniqueness, that is, $ \hatu(t, x_1, x_2, x_3) = \tu(t, x_1, x_2, x_3) $, we know that 
	\begin{equation*}
		(\tu_1, \tu_3)(t, x_1, 0, x_3) = 0, \quad
		\ptial{2} \tu_2(t, x_1, 0, x_3) = 0.
	\end{equation*}
	Consequently, the restricted function 
	\begin{gather*}
		\tu \in \W{1}(J; \Lq{q}(\bbr \times \bbr_+ \times \bbr)^3) \cap \Lq{q}(J; \W{2}(\bbr^2 \times \dR)^3).
	\end{gather*}
	solves \eqref{AppendixEqs: Heat-transmission-half} with $ (u_0, f, g_3, g_4) = 0 $.
	As in Section \ref{Section: TwoPhase-Bent-halfspace}, one concludes that there is also a solution operator with regard to the data and solutions for the heat transmission problem in a half space with a bent interface.
	
	\textbf{Heat transmission problem with a Dirichlet boundary.} Suppose that $ \OM \subset \bbr^3 $ is a bounded domain with $ C^2 $-boundary, consisting of two parts $ \OM^\pm $ which are also open and such that $ \Bar{\OM^+} \subset \OM $. Let $ \Sigma = \partial \OM^+ $ be the interface separating $ \OM^\pm $ such that $ \OM = \OM^+ \cup \Sigma \cup \OM^- $. Consider the problem
	\begin{equation} \label{Appendix: Parabolic transmission with Dirichlet}
		\begin{alignedat}{3}
			\rho \pt u - \mu \Delta u & = f, && \tin \OM \backslash \Sigma \times J, \\
			\jump{u} & = g_1, && \ton \Sigma \times J, \\
			\jump{\mu (\nabla u + \tran{\nabla} u) \nu} & = g_2, && \ton \Sigma \times J, \\
			u & = g_3, && \ton \partial \OM \times J, \\
			u(0) & = u_0, && \tin \OM \backslash \Sigma,
		\end{alignedat}
	\end{equation}
	where $ \rho, \mu > 0 $, $ \nu $ denotes the outer unit normal vector on $ \Sigma $ pointing from $ \OM^+ $ into $ \OM^- $. Note that a similar transmission problem with a Neumann boundary condition was investigated in \cite[Section 6.5]{PS2016}, one could solve \eqref{Appendix: Parabolic transmission with Dirichlet} by a truncation technique. Readers are referred to \cite[Section 3.1.2]{AL2021} for more details, where we handled the two-phase Stokes problem. In this position, we only give a sketch of the procedure. To this end, we choose $ \psi(x) \in C_0^\infty (\OM) $ as a cutoff function over $ \OM $ such that 
	\begin{equation}
		\label{cutoff function}
		\psi(x) = 
		\left\{
		\begin{aligned}
			& 1, \quad \text{in a neighborhood of}\  \OM^+,\\
			& 0, \quad \text{in a neighborhood of}\  \partial \OM.
		\end{aligned}
		\right.
	\end{equation}
	Define $ u := \psi u_1 + (1 - \psi) u_2 $,
	where $ u_1 $ is the solution of a parabolic transmission problem with a Neumann boundary condition in $ \OM $, while $ u_2 $ solves the heat equation in $ \OM^- $ with a Dirichlet boundary condition. Then $ u $ solves \eqref{Appendix: Parabolic transmission with Dirichlet} with some remainders. Since systems for $ u_1 $ and $ u_2 $ are all solvable and hence, following the procedure in \cite[Section 3.1.2]{AL2021} completes the proof of this part.
	
	Finally, by a standard localization procedure as in Section \ref{Subsection: Localization procedure} (see also \cite{PS2016}), one can finish the proof analogously, with the help of Proposition \ref{AppendixPropositon: POU-Omega}.
\end{proof}

\end{document}